\theoremstyle{plain}
\newtheorem{thm}{Theorem}[section] 
\newtheorem{lem}[thm]{Lemma}
\newtheorem{prop}[thm]{Proposition}
\newtheorem{cor}[thm]{Corollary}
\newtheorem{introtheorem}{Theorem}[section]
\newtheorem{introcor}{Corollary}[section]
\theoremstyle{definition}
\newtheorem{defn}[thm]{Definition} 
\newtheorem{exmp}[thm]{Example}
\newtheorem{rem}[thm]{Remark}
\newtheorem{notation}[thm]{Notation}
\newcommand{\be}{\begin{enumerate}[label=(\alph*) ,leftmargin=*]}
\newcommand{\ben}{\begin{enumerate}[label=(\arabic*), leftmargin=*]}
\newcommand{\ee}{ \end{enumerate} }
\newcommand{\Mon}{\mathsf{Mon}}
\newcommand\und{\underline}
\newcommand{\C}{\mathcal{C}}
\newcommand{\U}{\mathcal{U}}
\newcommand{\V}{\mathcal{V}}
\newcommand{\A}{\mathcal{A}}
\newcommand{\D}{\mathcal{D}}
\newcommand{\PP}{\mathcal{P}}
\newcommand{\X}{\mathcal{X}}
\newcommand{\T}{\mathcal{T}}
\newcommand{\I}{\mathcal{I}}
\newcommand{\EE}{\mathbb{E}}
\newcommand{\bE}{\mathbb{E}}
\newcommand{\fs}{\mathfrak{s}}
\newcommand{\II}{\mathbb{I}}
\newcommand{\PPP}{\mathbb{P}}
\newcommand{\DD}{\mathbb{D}}
\newcommand{\CD}{\und{\C}_\D}
\newcommand{\SD}{\Sigma_\D}
\newcommand{\SR}{\mathcal{S}(\R)}
\newcommand{\bsm}{\left[ \begin{smallmatrix}}
\newcommand{\esm}{\end{smallmatrix} \right]}
\renewcommand{\S}{\mathcal{S}}
\newcommand{\R}{\mathcal{R}}
\newcommand\blfootnote[1]{%
  \begingroup
  \renewcommand\thefootnote{}\footnote{#1}%
  \addtocounter{footnote}{-1}%
  \endgroup
}
\title{Right triangulated categories: As extriangulated categories, aisles and co-aisles}
\author{Aran Tattar}
\date{\today}
\begin{document}

\maketitle
\abstract{Right triangulated categories can be thought of as triangulated categories whose shift functor is not an equivalence. We give intrinsic characterisations of when such categories have a natural extriangulated structure and are appearing as the (co-)aisle of a (co-)t-structure in an associated triangulated category.}

\section{Introduction}
\blfootnote{ \noindent 2020 Mathematics Subject Classification: 18G50,  18G80, 18E30, 18G25. \\ \: Keywords: Right triangulated, extriangulated, t-structure, co-t-structure. 
\\  Acknowledgments: The author would like to thank their PhD supervisor Sibylle Schroll for many helpful discussions.}
Extriangulated categories were introduced by Nakaoka and Palu in \cite{nakaoka2019extriangulated} as a simultaneous generalisation of exact categories and triangulated categories. The framework of extriangulated categories allows one to axiomatise properties of categories that have structural similarities to exact and/or triangulated categories but fall into neither class, for  example extension closed subcategories of triangulated categories. The formalism of extriangulated categories then allows homological algebra to be applied to such categories which has been done successfully by many authors, for example \cite{enomoto2021classifying, padrol2019associahedra,  Zhou2018}.

The data of a right triangulated category (or suspended category), first introduced in \cite{Keller1987} consists of an additive category $\R$, an endofunctor $\Sigma:\R \to \R$ called `the shift of $\R$' and a class of right triangles of the form $A \to B \to C \to \Sigma A$ subject to essentially the same axioms as the triangles of a triangulated category (see Definition \ref{def:righttri}). Informally, a right triangulated category is a triangulated category whose shift functor is not necessarily an equivalence. Such categories (or their left-handed analogues) have been the subject of study in many articles \cite{Assem2008, Beligiannis1992, Keller1990,  li2015triangulation, Lin2007}. 
 In \cite{assem1998right}, the class of `right triangulated categories with right semi-equivalence' were introduced, these are the right triangulated categories whose shift functor is fully faithful and with image that is closed under extensions. Such right triangulated categories enjoy  homological properties close to those of triangulated categories, we formalise this similarity by showing the following. 

\begin{introtheorem}  \label{inttheorem:righttri1}(Corollary \ref{cor:righttriextri}.)
A  right triangulated category has the natural structure of an extriangulated category if and only if the shift functor is a right semi-equivalence.  
\end{introtheorem}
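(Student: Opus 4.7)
The plan is to pin down the only plausible candidate for the extriangulated structure and then characterise exactly which axioms depend on which properties of $\Sigma$. The natural candidate is
\[
\EE(C, A) := \Hom_\R(C, \Sigma A),
\]
bifunctorial via pre-composition and via the action of $\Sigma$ on morphisms, with realisation $\fs$ sending $\delta \in \EE(C, A)$ to the equivalence class of a right triangle $A \to B \to C \xrightarrow{\delta} \Sigma A$ produced by (TR1). Both directions of the theorem then reduce to analysing which extriangulated axioms are forced on this data by which hypotheses on $\Sigma$.

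For the backward direction, assume $\Sigma$ is a right semi-equivalence. Biadditivity (ET1) and functoriality (ET2), together with the compatibility of $\fs$ with morphisms of extensions, are formal from the bifunctoriality of $\Hom(-, \Sigma -)$ and the rotation and morphism axioms (TR2)--(TR3). The substantive content is the extriangulated long exact sequence
\[
\Hom(C, X) \to \Hom(B, X) \to \Hom(A, X) \to \EE(C, X) \to \EE(B, X) \to \EE(A, X)
\]
and its contravariant analogue. I would identify the relevant stretches of each sequence with pieces of the standard right-triangulated cohomological sequence obtained by rotating the triangle, and observe that exactness at $\Hom(A, X)$ and at $\EE(C, X)$ is precisely equivalent to faithfulness, respectively fullness, of the natural map $\Sigma : \Hom(A, X) \to \Hom(\Sigma A, \Sigma X)$. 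The remaining axioms (ET3), (ET4) and their duals follow from the morphism and octahedral axioms of right triangulated categories, with extension-closure of $\Sigma(\R)$ entering in (ET4) to guarantee that the intermediate objects produced by combining two conflations assemble into a well-defined class in $\EE$.

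For the forward direction, assume $(\R, \EE, \fs)$ is extriangulated in this natural sense. Given any right triangle $A \to B \to C \xrightarrow{\delta} \Sigma A$ and object $X$, comparing the extriangulated long exact sequence with the right-triangulated cohomological sequence of the rotated triangle $B \to C \to \Sigma A \to \Sigma B$ applied to $\Hom(-, \Sigma X)$, one finds that the two sequences agree termwise except that the connecting map factors through $\Sigma : \Hom(A, X) \to \Hom(\Sigma A, \Sigma X)$. Exactness at both ends forces this map to be bijective, so $\Sigma$ is fully faithful. For extension-closure, given a right triangle $\Sigma A \to Y \to \Sigma B \xrightarrow{\eta} \Sigma^2 A$, fullness supplies $\delta : B \to \Sigma A$ with $\eta = \Sigma\delta$; realising $\delta$ by a right triangle $A \to D \to B \xrightarrow{\delta} \Sigma A$ and applying $\Sigma$ together with iterated rotation produces a second right triangle with the same outer data $\eta$, and (TR3) then yields $Y \cong \Sigma D$.

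The main obstacle is (ET4) in the backward direction: the right-triangulated octahedron supplies the ambient diagram, but one must additionally check that the boundary extensions are genuinely related by the pushforward/pullback operations on $\EE$. Extension-closure of $\Sigma(\R)$ is used precisely at this step to keep the constructed middle terms inside the locus on which $\EE$ is well behaved, and the fully faithfulness of $\Sigma$ is used to identify the relevant extension classes. All other verifications are routine diagram chases from the right-triangulated axioms.
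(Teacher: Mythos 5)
The central gap is in your ``backward'' direction, at the very definition of the realisation $\fs$. You assert that each $\delta \in \EE(C,A) = \R(C,\Sigma A)$ is realised by ``a right triangle $A \to B \to C \xrightarrow{\delta} \Sigma A$ produced by (TR1)'', but axiom (R1)(iii) only completes a morphism $A \to B$ to a triangle having it as \emph{first} map; it produces no triangle with a prescribed \emph{third} map $C \to \Sigma A$, and in a right triangulated category one cannot rotate backwards to manufacture one. Producing such a triangle is precisely the non-trivial de-rotation step where the right semi-equivalence does its work: embed $\delta$ in a triangle $C \xrightarrow{\delta} \Sigma A \to D \to \Sigma C$, rotate, use extension-closure of $\Sigma\R$ to get $D \cong \Sigma B$, use fullness to write the maps as $\Sigma x$, $\Sigma y$, and use faithfulness together with uniqueness of cones to descend to a triangle ending in $\delta$. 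So you have mislocated the hypotheses: extension-closure is not merely a convenience inside (ET4), and the long exact sequences are \emph{consequences} of (ET1)--(ET4), not axioms one verifies. Likewise, (ET3)$^{\mathrm{op}}$ and (ET4)$^{\mathrm{op}}$ do not ``follow from the morphism and octahedral axioms'': a right triangulated category has no left rotation, and these dual verifications again need the semi-equivalence. The paper sidesteps this entire verification: the semi-equivalence makes $\R$ an extension-closed subcategory of its stabilisation $\SR$ (Lemma \ref{lem: T extn closed}), so the extriangulation of the triangulated category $\SR$ restricts to $\R$ (Remark \ref{rem:extnclosedextri}); equivalently one uses the quotient machinery with $\D=\{0\}$ (Propositions \ref{thm:abm} and \ref{funcyisom}).

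Your ``forward'' direction is closer to being correct, and the idea of extracting full faithfulness of $\Sigma$ from the long exact sequence of the extriangle $A \to 0 \to \Sigma A$ is a nice direct alternative to the paper's route (which shows $A \to 0$ is an inflation, deduces that $0$ is the only $\EE$-injective and that there are enough $\EE$-injectives, and then invokes Theorem \ref{thm:nicerighttriextri}). However, as written it proves a weaker statement than Corollary \ref{cor:righttriextri}: you assume the extriangulated structure is your specific candidate, with $\EE = \R(-,\Sigma-)$ and $\fs(\delta)$ realised by a triangle whose third map is $\delta$, whereas Definition \ref{def:righttri}'s companion notion (Definition 3.10 in the paper) only requires that every right triangle underlie \emph{some} extriangle of \emph{some} external triangulation $(\EE,\fs)$. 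In particular your extension-closure step ``realising $\delta$ by a right triangle $A \to D \to B \xrightarrow{\delta} \Sigma A$'' is legitimate only because you have built that de-rotation property into the hypothesis; under the paper's weaker hypothesis you would instead have to identify $\Sigma$ with the cone of $A \to 0$ in the given extriangulation (uniqueness of cones) before any such comparison is available. Either adopt the paper's notion of ``induces'' and redo these steps using only that right triangles give extriangles, or state explicitly that you are characterising when your particular candidate structure is an extriangulation -- and in the latter case the backward direction still needs the de-rotation lemma supplied, not cited to (TR1).
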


Moreover, we are able to characterise which extriangulated categories have a natural right triangulated structure. 

\begin{introtheorem} \label{intthm: righttri2} (Theorem \ref{thm:nicerighttriextri}.)
The extriangles of an extriangulated category  $(\mathcal{C}, \EE, \fs)$ induce a right triangulated structure with right semi-equivalence on $\C$ if and only if  the morphism $X \to 0$ is an $\EE$-inflation for all objects $X \in \C$. In other words,  the zero object of $\C$ is the only $\EE$-injective object and there are enough $\EE$-injectives.
\end{introtheorem}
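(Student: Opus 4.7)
The proof splits into three pieces: the ``in other words'' reformulation, the forward direction, and the more substantial backward direction.

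For the reformulation, if every $X \to 0$ is an $\EE$-inflation then it witnesses enough injectives (take $I = 0$); conversely, for any $\EE$-injective $I$ the guaranteed extriangle $I \to 0 \to \Sigma I \dashrightarrow$ has its extension class in $\EE(\Sigma I, I) = 0$ and hence splits, forcing $0 \cong I \oplus \Sigma I$ and so $I \cong 0$. For the forward direction, by Theorem \ref{inttheorem:righttri1} we are working with an underlying right triangulated structure with right semi-equivalence $\Sigma$. The identity axiom of a right triangulated category gives the right triangle $X \xrightarrow{1_X} X \to 0 \to \Sigma X$, which after one rotation becomes $X \to 0 \to \Sigma X \to \Sigma X$, equivalently an extriangle $X \to 0 \to \Sigma X \dashrightarrow$; hence $X \to 0$ is an $\EE$-inflation.

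For the backward direction, for each $X$ fix an extriangle $X \to 0 \to \Sigma X \dashrightarrow$ realising a class $\delta_X \in \EE(\Sigma X, X)$; this defines $\Sigma$ on objects, and the morphism-of-extriangles axiom extends $\Sigma$ to a functor (well-definedness requires a short verification). To define right triangles, given an extriangle $X \to Y \to Z \dashrightarrow$, I would apply the octahedron axiom of extriangulated categories to it together with $Y \to 0 \to \Sigma Y \dashrightarrow$ (which share the middle term $Y$). The resulting commutative diagram produces both a canonical connecting morphism $Z \to \Sigma X$---yielding the right triangle $X \to Y \to Z \to \Sigma X$---and an extriangle $Z \to \Sigma X \to \Sigma Y \dashrightarrow$ encoding the rotated right triangle $Y \to Z \to \Sigma X \to \Sigma Y$. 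The axioms of a right triangulated category then descend systematically from the axioms of an extriangulated category.

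Finally, I must check that $\Sigma$ is a right semi-equivalence. Full faithfulness drops out of the six-term $\EE$-sequences of $X \to 0 \to \Sigma X \dashrightarrow$ and $Y \to 0 \to \Sigma Y \dashrightarrow$, which combine to give $\Hom(X, Y) \cong \EE(\Sigma X, Y) \cong \Hom(\Sigma X, \Sigma Y)$; a Yoneda-style check identifies the composite iso with the map induced by $\Sigma$. For the image being closed under extensions, given an extriangle $\Sigma A \to B \to \Sigma C \dashrightarrow$ realising a class $\eta \in \EE(\Sigma C, \Sigma A)$, transport $\eta$ through $\EE(\Sigma C, \Sigma A) \cong \EE(C, A)$ to a class $\eta'$ realised as $A \to D \to C \dashrightarrow$; then the octahedron applied to this extriangle and $D \to 0 \to \Sigma D \dashrightarrow$ yields an extriangle $C \to \Sigma A \to \Sigma D \dashrightarrow$, and matching this back to the original via the pushout presentation of $B$ gives $B \cong \Sigma D$. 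The main obstacle I expect is precisely this final reconciliation---showing the octahedron-produced extriangle aligns with the given one under naturality of the various $\EE$-isomorphisms; the rest is systematic extriangulated bookkeeping.
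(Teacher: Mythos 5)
Your proposal is correct in outline, but it takes a more hands-on route than the paper. The paper never constructs the right triangulation inside the proof of Theorem \ref{thm:nicerighttriextri}: it first builds general machinery --- Proposition \ref{prop:new exact str} (relative extriangulated structures $\II_\D$), Proposition \ref{thm:abm} (the extriangulated version of the Assem--Beligiannis--Marmaridis quotient construction, giving a right triangulation on $\und{\C}_\D$) and Proposition \ref{funcyisom} (a criterion for $\Sigma_\D$ to be a right semi-equivalence together with the functorial isomorphism $\und{\C}_\D(C,\Sigma_\D A)\cong\DD_\D(C,A)$) --- and then gets the backward direction in one line by taking $\D=\{0\}$, so that $\und{\C}_{\{0\}}\cong\C$, $\Sigma_{\{0\}}X=\mathrm{cone}(X\to 0)$, and Example \ref{ex:tricatsextri}(b) shows the resulting right triangulation induces $(\EE,\fs)$. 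Your direct construction (choose extriangles $X\to 0\to\Sigma X$; obtain the connecting morphism and the rotation from (ET4); obtain full faithfulness and $\EE(-,?)\cong\C(-,\Sigma ?)$ from the long exact sequences of Lemma \ref{lem:extriles}; obtain extension-closure of $\Sigma\C$ by reconciling the (ET4) column with the pushout presentation of Lemma \ref{lem:LN1.20}) is precisely the specialisation of that machinery to $\D=\{0\}$, so the mathematics agrees; the paper's route buys reusability (the same propositions drive Sections 4--5), yours buys self-containedness and makes the isomorphism $\C(X,Y)\cong\EE(\Sigma X,Y)\cong\C(\Sigma X,\Sigma Y)$ explicit. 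Your forward direction (rotate the identity triangle twice) is exactly the paper's (c)$\Rightarrow$(b), and your splitting argument for the reformulation matches the paper's. Note also that both you and the paper leave the verification of (R1)--(R4) as a routine recycling of known arguments, so you are not under-delivering relative to the source there.

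One point you should make explicit in the backward direction: with your definition of right triangles (those produced from extriangles via (ET4)), axiom (R1)(iii) demands that \emph{every} morphism $f\colon A\to B$ of $\C$ be an $\EE$-inflation, and this is not mere bookkeeping. It does follow from the hypothesis: realise $f_\ast\delta_A$ and apply Lemma \ref{lem:LN1.20} to the extriangle $A\to 0\to\Sigma A$ and $f$, which exhibits $A\xrightarrow{-f}B\to C$ as an extriangle, hence $f$ is an inflation. (The paper's formulation in Proposition \ref{thm:abm} sidesteps this by attaching to an arbitrary $f$ the triangle determined by a realisation of $f_\ast\delta_A$.) Your flagged ``final reconciliation'' for extension-closure does go through: the (ET4) column $C\xrightarrow{h}\Sigma A\to\Sigma D$ satisfies $h^\ast\delta_A=\eta'$, the map $(-)^\ast\delta_A\colon\C(C,\Sigma A)\to\EE(C,A)$ is injective, so $h$ equals the morphism classifying $\eta$, and uniqueness (up to isomorphism) of the cone of an inflation yields $B\cong\Sigma D$.
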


To prove this, we use an extriangulated generalisation of the constructions of a right triangulated quotient category from a contravariantly finite subcategory  (of an additive category) due to \cite{beligiannis1994left} and \cite{assem1998right} (see Proposition \ref{thm:abm}). We also use ideas from relative homological algebra  to characterise which extriangulated structures give rise to right triangulated quotient categories (with right semi-equivalence) (see Proposition \ref{funcyisom}).  

The construction of these right triangulated structures  can be thought of as a `one-sided' analogue of the triangulated structure of the stable category of a Frobenius exact (or even extriangulated) category \cite{Heller1960, happel1988triangulated}. See \cite{Beligiannis2000a,Iyama2008}  for similar constructions. Unsurprisingly, given the terminology, this one-sidedness also appears in Theorem \ref{intthm: righttri2} as triangulated categories are precisely the extriangulated categories satisfying both this condition and its dual \cite[Proposition 3.22]{nakaoka2019extriangulated}.

Standard examples of right triangulated categories with right semi-equivalence are aisles of t-structures  and co-aisles of co-t-structures  in triangulated categories introduced by \cite{Beuilinson1982} and \cite{Bondarko2010, Pauksztello2008} respectively (we recall the definitions in  Definition \ref{defn:torsion}). 


 We aim to characterise precisely which right triangulated categories are appearing as aisles and co-aisles. To every right triangulated category $\R$ there is an associated triangulated category: the stabilisation $\SR$ (see Section \ref{section:costab} for details and construction). In the case where $\R$ has a right semi-equivalence, $\SR$ can be thought of as the smallest triangulated category containing $\R$ as a subcategory. Furthermore, we show in Lemma \ref{lem:aisleinstab} that if $\R$ is a (co-)aisle in a triangulated category then it is also a (co-)aisle in $\SR$. Thus we look to characterise when $\R$ is a (co-)aisle in $\SR$.

We give an intrinsic characterisations of when a right triangulated category with right semi-equivalence, $\R$, appears as the co-aisle of a co-t-structure in terms of internal torsion pairs of $\R$ and homological properties.  

\begin{introtheorem} \label{intthm:coaisle} (Theorem \ref{thm:coaisles}.) Let $\R$ be a right triangulated category with right semi-equivalence. Then the following are equivalent
\be 
\item  $\R$ is the co-aisle of a co-t-structure $(\U, \R)$  in $\S(\R)$;
\item $\R$ has enough projectives;
\item There is a torsion pair $(\mathsf{Proj}_{\EE} \R, \Sigma \R)$ in $\R$.
\ee
Moreover, suppose that for all $A, B \in \R$ we have that $\R(A, \Sigma^i B ) =0 $ for $i>>0$, that is, $\R$ is \textit{bounded}. Then the above conditions are also equivalent to \be \item[(d)] $\mathsf{Proj}_{\EE}\R$ is a silting subcategory of $\S(\R)$. \ee
\end{introtheorem}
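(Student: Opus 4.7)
The approach is to prove \textrm{(a)}$\Rightarrow$\textrm{(b)}$\Leftrightarrow$\textrm{(c)}$\Rightarrow$\textrm{(a)}, and then to deduce \textrm{(a)}$\Leftrightarrow$\textrm{(d)} under the boundedness hypothesis. By Theorem~A the extriangulated structure on $\R$ satisfies $\EE(C,A)=\R(C,\Sigma A)$, hence $\mathsf{Proj}_{\EE}\R$ consists precisely of the $P\in\R$ with $\R(P,\Sigma R)=0$ for every $R\in\R$. In particular the Hom-vanishing $\Hom_{\R}(\mathsf{Proj}_{\EE}\R,\Sigma\R)=0$ needed for the torsion pair in \textrm{(c)} is automatic, so the only content of \textrm{(c)} is the existence of the torsion-pair decomposition extriangles.

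\emph{(b)$\Leftrightarrow$(c).} A projective presentation of $X\in\R$ is an extriangle $\Omega X\to P\to X$, equivalently a right triangle $\Omega X\to P\to X\to \Sigma\Omega X$. Its forward rotation (an axiom of right triangulated categories) is the right triangle $P\to X\to \Sigma\Omega X\to \Sigma P$, and the corresponding extriangle is exactly the torsion-pair decomposition of $X$ with respect to $(\mathsf{Proj}_{\EE}\R,\Sigma\R)$. Conversely, starting from such a decomposition $P\to X\to \Sigma Y$, the connecting morphism $\Sigma Y\to \Sigma P$ lifts uniquely via the right semi-equivalence to $f\colon Y\to P$; completing $f$ to a right triangle $Y\to P\to Z\to \Sigma Y$ and applying $\Sigma$ yields a right triangle which, by uniqueness of cones and full faithfulness of $\Sigma$, must coincide with the forward rotation of the original decomposition. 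Hence $Z\cong X$, giving the required projective presentation.

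\emph{(a)$\Leftrightarrow$(b).} If $(\U,\R)$ is a co-t-structure in $\SR$ and $X\in\R$, the truncation triangle $U\to X\to R\to \Sigma U$ has $R\in\Sigma\R\subseteq\R$, and extension-closure of the co-aisle forces $U\in\U\cap\R$; the orthogonality $\Hom_{\SR}(\U,\Sigma\R)=0$ makes $U\in\mathsf{Proj}_{\EE}\R$. The rotation $\Sigma^{-1}R\to U\to X\to R$ then lies entirely in $\R$ and is an extriangle realising a projective deflation onto $X$. For the converse, define $\U\subseteq \SR$ to be the closure of $\mathsf{Proj}_{\EE}\R$ under direct summands, extensions, and $\Sigma^{-1}$. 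Closure under $\Sigma^{-1}$ and the orthogonality $\Hom_{\SR}(\U,\Sigma\R)=0$ are automatic, and the truncation triangles for $X\in\R$ come directly from the torsion pair. For a general $Z\in\SR$, which by the construction of the stabilisation has the form $\Sigma^{-n}Y$ with $Y\in\R$ and $n\geq 0$, one applies the torsion pair iteratively to $Y$ and its syzygies and then $\Sigma^{-n}$-shifts the resulting composite truncation triangle into place.

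\emph{(a)$\Leftrightarrow$(d) and main obstacle.} Under the boundedness hypothesis, every object of $\R$ has finite projective dimension (since $\ext^i(X,-)=\R(X,\Sigma^i-)=0$ for $i\gg 0$), so $\SR$ is generated by $\mathsf{Proj}_{\EE}\R$ as a thick subcategory; the known bijection between bounded co-t-structures and silting subcategories of triangulated categories then identifies the heart $\U\cap\R=\mathsf{Proj}_{\EE}\R$ with a silting subcategory of $\SR$, with the converse going back via the same bijection. The principal obstacle throughout is the construction of truncation triangles in (c)$\Rightarrow$(a): a naive $\Sigma^{-n}$-shift of a torsion decomposition in $\R$ does not place the second term in $\Sigma\R$, so an inductive walk down the syzygy chain (possible thanks to the right semi-equivalence) is required. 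Once these truncation triangles are in hand, the remaining co-t-structure axioms verify routinely.
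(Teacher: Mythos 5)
Your overall architecture is essentially the paper's: (a)$\Rightarrow$(b) by truncating in $\SR$ and rotating, using extension-closure of $\R$ (Lemma \ref{lem: T extn closed}); (b)$\Leftrightarrow$(c) by rotation; (c)$\Rightarrow$(a) by inductively manufacturing truncation triangles for objects $\Sigma^{-n}Y$; and (d) via the identification of silting subcategories with co-hearts of bounded co-t-structures from \cite{MendozaHernandez2013}. Your direct de-rotation argument for (c)$\Rightarrow$(b) (lifting the connecting map along the fully faithful $\Sigma$ and invoking uniqueness of cones) is a small genuine variation, since the paper only obtains (c)$\Rightarrow$(b) by passing through (a); that argument is fine.

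Two points need attention. First, you are silently working with a shifted convention for co-t-structures. In the paper's Definition \ref{defn:torsion} a co-t-structure $(\U,\R)$ is a torsion pair, so a decomposition of $X\in\R$ is a triangle $U\to X\to R\to\Sigma U$ with $R\in\R$ only, and the orthogonality is $\SR(\U,\R)=0$. Hence your step in (a)$\Rightarrow$(b) asserting that the truncation of $X$ has $R\in\Sigma\R$ is unjustified (the decomposition $0\to X\to X$ need not have its third term in $\Sigma\R$), and your candidate aisle --- the closure of $\mathsf{Proj}_{\EE}\R$ under $\Sigma^{-1}$, extensions and summands --- contains the projectives themselves, so it fails $\SR(\U,\R)=0$. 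The paper's fixes are exactly the missing moves: truncate $\Sigma^{-1}X$ and rotate, so that $\Sigma U\in\R$ by extension-closure and $\R(\Sigma U,\Sigma\R)\cong\SR(U,\R)=0$; and take $\U:=\{Y\in\SR\mid\SR(Y,\R)=0\}$, which contains $\Sigma^{-n}P$ only for $n\geq 1$. Everything you wrote is internally consistent with the alternative convention $\Hom_{\SR}(\U,\Sigma\R)=0$, which is equivalent to the paper's via $\U\mapsto\Sigma\U$, but you must state and use that translation; as written against the paper's definitions the step fails. Second, in (c)$\Rightarrow$(a) the construction of truncations for $\Sigma^{-n}Y$ is the real technical content: one must splice the $\Sigma^{n}$-shifted torsion decomposition of $Y$ with the inductively obtained truncation of the shifted co-aisle term using the octahedral axiom ((ET4)$^{\mathrm{op}}$ in the paper's diagram); your ``composite truncation triangle'' gestures at this but does not carry it out. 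A minor further point: in (d), boundedness gives only $\R(A,\Sigma^i B)=0$ for $i\gg 0$ non-uniformly in $B$, so ``finite projective dimension'' is not immediate; the paper instead proves directly that the resulting co-t-structure is bounded and identifies $\mathsf{Proj}_{\EE}\R$ (up to shift) with its co-heart before citing the silting bijection.
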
 

Part (d) of the above result adds to the interpretations of silting subcategories in a triangulated category, which are surveyed in \cite{AngeleriHuegel2019a}. We also note that this result can be thought of as a generalisation of \cite[Theorem 4.2]{Assem2008} and that the work of \cite{MendozaHernandez2013} is useful in the proof. We also obtain the following as a direct consequence.

\begin{introcor} (Corollary \ref{cor:siltingcorrespondence}.)
There is a correspondence between silting subcategories of triangulated categories and bounded right triangulated categories with right semi-equivalence that have enough projectives.
\end{introcor}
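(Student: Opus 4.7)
The plan is to define mutually inverse maps between the two collections, using Theorem \ref{thm:coaisles} as the main engine. Given a bounded right triangulated category $\R$ with right semi-equivalence and enough projectives, part (d) of Theorem \ref{thm:coaisles} shows that $\mathsf{Proj}_{\EE}\R$ is a silting subcategory of $\S(\R)$, and this defines the forward map. For the reverse map, starting from a silting subcategory $\mathcal{M}$ of a triangulated category $\T$, I would invoke the established bijection (see \cite{MendozaHernandez2013}) between silting subcategories and bounded co-t-structures to obtain the bounded co-t-structure $(\U, \R)$ in $\T$ with co-heart $\mathcal{M}$, and assign to $\mathcal{M}$ the co-aisle $\R$.

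To see this reverse map lands in the correct class: since $\R$ is a co-aisle in $\T$, Lemma \ref{lem:aisleinstab} gives that $\R$ is also a co-aisle in $\S(\R)$, whence Theorem \ref{thm:coaisles}(a)$\Rightarrow$(b) furnishes enough projectives. The right triangulated structure with right semi-equivalence is standard for co-aisles, and the boundedness hypothesis on $\R$ in the sense of the paper transports directly from boundedness of the associated co-t-structure.

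Next I would verify that the two compositions are identities. Starting from $\R$, the silting subcategory $\mathsf{Proj}_{\EE}\R$ corresponds under the silting--co-t-structure bijection to the unique bounded co-t-structure in $\S(\R)$ with co-heart $\mathsf{Proj}_{\EE}\R$; by Theorem \ref{thm:coaisles}(a) the co-t-structure with co-aisle $\R$ has this property, so the composition returns $\R$. Starting from $\mathcal{M} \subseteq \T$, we pass to the co-aisle $\R$ and then to $\mathsf{Proj}_{\EE}\R \subseteq \S(\R)$. The equality $\mathsf{Proj}_{\EE}\R = \mathcal{M}$ recasts the identification of the co-heart of the co-t-structure with its silting subcategory, while the equivalence $\S(\R) \simeq \T$ follows from the thick-generation clause in the definition of silting together with the universal characterisation of the stabilisation.

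The step I expect to be the main obstacle is the identification $\S(\R) \simeq \T$: one must show that the stabilisation of the co-aisle recovers the ambient triangulated category precisely when $\mathcal{M}$ thick-generates $\T$. The remaining verifications are largely bookkeeping once Theorem \ref{thm:coaisles} and the silting--co-t-structure correspondence are in place.
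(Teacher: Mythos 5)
Your proposal is correct and follows the same route the paper intends: the corollary is obtained by combining Theorem \ref{thm:coaisles} (in particular part (d), together with the observation that $\mathsf{Proj}_{\EE}\R$ is the co-heart and the boundedness transfer via Lemma \ref{lem:aisleinstab} and the lemma following it) with the silting/bounded co-t-structure bijection of \cite{MendozaHernandez2013}. The paper leaves these verifications implicit ("as a consequence"), and your write-up, including the identification $\S(\R)\simeq\T$ from boundedness/thick generation, supplies exactly the missing bookkeeping.
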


For the case of t-structures, there are related works \cite{AlonsoTarrio2000, Keller1988, Laking2020} that give various characterisations of aisles. We note that our approach differs in the sense that we look to give characterisations intrinsic to the right triangulated category, that is, the aisle, rather than properties of the aisle related to the ambient triangulated category. 

\begin{introtheorem} \label{intthm:aisle} (Theorem \ref{thm:aisles}.) Let $\R$ be a right triangulated category with right semi-equivalence.
Then the following are equivalent
\be 
\item $\R$ is the aisle of a t-structure $(\R, \V)$  in $\S(\R)$;
\item There is a torsion pair $(\Sigma \R, \mathcal{F})$ in $\R$;
\item $\SR$ is equivalent to the co-stabilisation of $\R$. That is, $\SR$ has the following universal property: Every right triangle functor $G:\T \to \R$ with $\T$ being a triangulated category factors uniquely through $\SR$  (see Section \ref{section:costab}).
\ee
\end{introtheorem}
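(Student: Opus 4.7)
The plan is to prove (a) $\Leftrightarrow$ (b) via a direct correspondence between t-structure data in $\S(\R)$ and torsion pair data in $\R$, and to close the loop through (a) $\Leftrightarrow$ (c) using the universal properties of the stabilisation and co-stabilisation. Throughout I will use that under the right semi-equivalence assumption the canonical functor $\R \to \S(\R)$ is fully faithful, and that $\R$ carries an extriangulated structure by Theorem A in which the notion of a torsion pair makes sense.

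For (a) $\Rightarrow$ (b), assume $(\R, \V)$ is a t-structure in $\S(\R)$ and set $\mathcal{F} := \R \cap \Sigma \V$. The vanishing $\Hom_\R(\Sigma X, F) = 0$ for $X \in \R$ and $F = \Sigma V' \in \mathcal{F}$ reduces, by full faithfulness of $\R \hookrightarrow \S(\R)$, to $\Hom(X, V') = 0$, which is the t-structure axiom. For the decomposition, given $X \in \R$ I would apply the t-structure to $\Sigma^{-1} X \in \S(\R)$ to obtain $U' \to \Sigma^{-1} X \to V' \to \Sigma U'$ with $U' \in \R$ and $V' \in \V$; shifting produces $\Sigma U' \to X \to \Sigma V' \to \Sigma^2 U'$. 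Since the aisle $\R$ is closed under extensions in $\S(\R)$ and both $X$ and $\Sigma^2 U'$ lie in $\R$, the middle term $\Sigma V'$ lies in $\R$, hence in $\mathcal{F}$, and the triangle descends to a right triangle in $\R$ witnessing the torsion decomposition.

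For (b) $\Rightarrow$ (a), I would set $\V := \R^{\perp}$ in $\S(\R)$ and produce the t-structure decomposition object-by-object. Every object of $\S(\R)$ is represented by some $\Sigma^{-n} X$ with $X \in \R$, $n \geq 0$, and I would induct on $n$. The case $n = 0$ is trivial; for $n = 1$, shifting the torsion triangle $\Sigma T \to X \to F \to \Sigma^2 T$ in $\R$ by $\Sigma^{-1}$ yields $T \to \Sigma^{-1} X \to \Sigma^{-1} F \to \Sigma T$ with $T \in \R$ and $\Sigma^{-1} F \in \V$, since $\Hom(\R, \Sigma^{-1} F) \cong \Hom(\Sigma \R, F) = 0$. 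For the inductive step I would iterate the torsion pair on the successive $\R$-parts and glue the resulting triangles via the octahedral axiom, producing a decomposition $U \to \Sigma^{-n} X \to V \to \Sigma U$ with $U \in \R$ and $V$ an iterated extension of shifts $\Sigma^{-k} F_k$ for $1 \leq k \leq n$, each lying in $\V$; closure of $\V$ under extensions, as a right orthogonal, then guarantees $V \in \V$.

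For (a) $\Rightarrow$ (c), I would use that an aisle inclusion $\iota: \R \hookrightarrow \S(\R)$ admits a right adjoint $\tau$ (the truncation) with $\tau \iota \simeq \mathrm{id}_\R$. Any right triangulated $G: \T \to \R$ with $\T$ triangulated extends to the triangulated functor $\iota G: \T \to \S(\R)$, and $\tau \circ \iota G \simeq G$ delivers the required factorisation. For (c) $\Rightarrow$ (a) I intend to identify the canonical right triangulated functor $\S(\R) \to \R$ produced by the co-stabilisation property as a right adjoint to $\iota$, recovering $\R$ as an aisle with co-aisle $\R^{\perp}$. The hardest step I anticipate is the uniqueness half of the universal property in (a) $\Rightarrow$ (c): given a second triangulated $F: \T \to \S(\R)$ with $\tau F \simeq G$, one must argue that $F$ agrees with $\iota G$, which I expect to demand combining the t-structure decomposition in $\S(\R)$ with the universal property of $\S(\R)$ as the stabilisation of $\R$, drawing on the explicit construction of the co-stabilisation from Section \ref{section:costab}.
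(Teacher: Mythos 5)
Your handling of (a)$\Leftrightarrow$(b) is correct and is essentially the paper's route: the paper obtains this equivalence by dualising the induction/octahedron argument of Theorem \ref{thm:coaisles}, which is exactly the shape of your argument (torsion triangle for $n=1$, then glueing on the successive $\R$-parts, with $\R^{\perp}$ closed under extensions and negative shifts). The small points you should make explicit -- that a triangle of $\SR$ with all three terms in $\R$ restricts to a right triangle of $\R$ (this uses the right semi-equivalence, via the remark following the construction of $\SR$ together with Lemma \ref{lem: T extn closed}), and that $\Sigma V'\in\R$ by extension-closure of the aisle -- are used correctly.

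The genuine gap is in (a)$\Leftrightarrow$(c). For (a)$\Rightarrow$(c) you leave the uniqueness half unproved, and the ingredients you point to (the universal property of $\SR$ as a stabilisation, the explicit spectra construction of the co-stabilisation) are not what is needed; the actual argument is short and different. Given a triangle functor $F'':\T\to\SR$ with $\tau F''\cong G$, fix $X\in\T$ and choose $n\gg 0$ (depending on $X$) such that $F''\Sigma^{n}X=\Sigma^{n}F''X$ lies in $\R$ -- possible because every object of $\SR$ is a negative shift of an object of $\R$. On objects of the aisle the truncation $\tau$ is the identity, so $\Sigma^{n}F''X\cong \tau F''\Sigma^{n}X\cong G\Sigma^{n}X\cong\Sigma^{n}(sG)X$, and applying the autoequivalence $\Sigma^{-n}$ of $\SR$ gives $F''X\cong (sG)X$; this is the paper's uniqueness argument, and nothing about the co-stabilisation construction enters. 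Relatedly, in (c)$\Rightarrow$(a) you only assert that the structure functor $r:\SR\to\R$ coming from the co-stabilisation is right adjoint to the inclusion $s$; this has to be verified, and it uses the compatibility $rs\cong 1_{\R}$ (which is how the paper phrases condition (c) in Theorem \ref{thm:aisles}) together with the computation $\R(X,r(Y,m))\cong\R(\Sigma^{-m}X,Y)\cong\SR(sX,(Y,m))$ for $m<0$, relying on full faithfulness of $\Sigma$ on $\R$ and on $r$ commuting with shifts. Finally, the step ``coreflective, shift-closed, extension-closed subcategory is an aisle'' is not automatic and should be cited as \cite[Proposition 1.2]{Keller1988}. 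As written, the plan for (a)$\Leftrightarrow$(c) records the correct targets but omits the arguments that actually carry the proof.
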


An immediate application is that the torsion pairs in Theorems \ref{intthm:coaisle}  and \ref{intthm:aisle} allow us to describe intermediate (co-)t-structures (Proposition \ref{prop:intermediate}).
We also show that in the case of Frobenius extriangulated categories, aisles of t-structures in the triangulated stable category may be constructed as (shifts of) right triangulated quotients.  
\begin{introtheorem}  \label{intthm:aislequotient} (Theorem \ref{thm:quotientshiftaisle}.) Let $\C = (\C, \EE, \fs)$ be a Frobenius extriangulated  category and $(\U,\V)$ be a t-structure in the triangulated stable category $\underline{\C}$ and set $\D := \Sigma^{-1}\V$. Then there is an equivalence of right triangulated categories $\U \cong \Sigma_\D \underline{\C}_\D$, where  $\underline{\C}_\D$ denotes the stable category of $\C$ by the ideal of morphisms factoring through objects of $\D$  and $\Sigma_\D$ is its shift functor.
\end{introtheorem}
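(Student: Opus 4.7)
The plan is to apply the extriangulated generalisation of the Beligiannis--Marmaridis construction (Proposition~\ref{thm:abm}) to equip $\und{\C}_\D$ with its right triangulated structure, and then to realise the equivalence $\U\simeq \Sigma_\D \und{\C}_\D$ via the canonical projection $\C\to\und{\C}_\D$ restricted to $\U$.

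First I would verify that $\D$ meets the hypotheses of Proposition~\ref{thm:abm}. Since $(\U,\V)$ is a t-structure in the triangulated category $\und{\C}$, so is the shifted pair $(\Sigma^{-1}\U,\D)$, and hence $\D$ is covariantly finite in $\und{\C}$ via the associated $\D$-reflection $\tau_\D\colon \und{\C}\to \D$. Every projective-injective of $\C$ moreover lies in $\D$, since it is zero in $\und{\C}$ and $0\in\V=\Sigma\D$. Using the Frobenius structure on $\C$, I would lift the reflection $X\to\tau_\D X$ to $\C$ and combine it with an $\EE$-inflation of $X$ into a projective-injective to obtain an honest $\EE$-inflation $X\hookrightarrow \tilde D$ with $\tilde D\in\D$ representing a left $\D$-approximation modulo $[\D]$. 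Proposition~\ref{thm:abm} then endows $\und{\C}_\D$ with its right triangulated structure, in which $\Sigma_\D X$ is the third term of any such conflation $X\to\tilde D\to \Sigma_\D X$.

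Second, because $\D$ contains all projective-injectives, the projection $\C\to\und{\C}_\D$ factors as $\C\to\und{\C}\xrightarrow{q}\und{\C}_\D$. I would study the restriction $q|_\U\colon \U\to\und{\C}_\D$. It is fully faithful: any morphism $U_1\to U_2$ in $\und{\C}$ factoring through some $D\in\D\subseteq\V$ must vanish because $\Hom_{\und{\C}}(\U,\V)=0$. To identify the essential image with $\Sigma_\D\und{\C}_\D$, I would descend the defining conflation $X\to\tilde D\to\Sigma_\D X$ to a distinguished triangle in $\und{\C}$ and rotate it to $\Sigma_\D X\to \Sigma X\to \Sigma\tilde D$; since $\Sigma\tilde D\in\Sigma\D=\V$ and $X\to\tilde D$ agrees modulo projective-injectives with the $\D$-reflection of $X$, uniqueness of t-structure truncations identifies this with the truncation triangle of $\Sigma X$ for $(\U,\V)$, giving $\Sigma_\D X\cong \tau_\U\Sigma X\in\U$. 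This shows $\Sigma_\D\und{\C}_\D\subseteq q|_\U(\U)$; conversely, for $U\in\U$ the identity $U\cong\Sigma(\Sigma^{-1}U)$ in $\und{\C}$ and the same calculation give $q(U)\cong\Sigma_\D\bigl(q(\Sigma^{-1}U)\bigr)\in\Sigma_\D\und{\C}_\D$, yielding the reverse inclusion.

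Finally, the equivalence intertwines the shifts via $\Sigma_\D\circ q|_\U=q|_\U\circ \Sigma|_\U$ (because $\tau_\U$ acts as the identity on $\U$), and distinguished right triangles on both sides arise from $\EE$-conflations in $\C$, so are preserved and reflected by $q$. The main obstacle is the essential-image computation: one must verify that the abstractly defined shift $\Sigma_\D$, which depends on choices of left $\D$-approximations and $\EE$-inflations, coincides canonically with $\tau_\U\Sigma$ inside $\und{\C}$. This hinges on the interaction of left $\D$-approximations in $\C$ with the $\D$-reflection in $\und{\C}$, for which the Frobenius hypothesis on $\C$ is essential.
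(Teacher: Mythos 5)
Your proposal is correct and follows essentially the same route as the paper: you establish that $\D=\Sigma^{-1}\V$ is covariantly finite in $\C$ with left $\D$-approximations being $\EE$-inflations by combining the truncation map with an inflation into a projective-injective (the paper's Lemma \ref{liftingcov}), identify $\Sigma_\D X$ with $\tau_\U\Sigma X$ in $\und{\C}$ (Lemma \ref{lem:shiftgood}), and then show the induced functor $\U\to\und{\C}_\D$ is fully faithful with essential image $\Sigma_\D\und{\C}_\D$ and compatible with shifts and triangles (Proposition \ref{thm: alg aisle}). The "main obstacle" you flag is resolved exactly as you indicate, since the chosen approximation agrees with the $\D$-reflection modulo projective-injectives, so no gap remains.
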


In  \cite[Proposition 3.9]{saorin2011exact}, it was shown that t-structures in an algebraic triangulated category correspond bijectively to certain complete cotorsion pairs in the associated Frobenius exact category. Additionally, it was observed in \cite[Proposition 2.6]{Nakaoka2011} that t-structures in a triangulated category are precisely cotorsion pairs satisfying a shift closure property.  Thus these results and Theorem \ref{intthm:aislequotient} complement each other since, via these bijections, a t-structure $(\U, \V)$ corresponds to a cotorsion pair $(\U, \Sigma^{-1} \V)$ (Remark~\ref{rem:SS,Ncomplementary}).

The article is organised as follows. In Section \ref{sec:background} we recall the necessary background material on relative homological algebra, extriangulated categories and right triangulated categories. In Section \ref{sec:rightriextri} we show how one may use relative homological algebra to construct new extriangulated structures and characterise the projectives and injectives of these new structures. We then investigate how such extriangulated structures induce a right triangulated structure on a quotient category and use this to prove Theorems \ref{inttheorem:righttri1} and \ref{intthm: righttri2}. 
We begin Section \ref{sec:aislescoaisles} with a discussion of torsion pairs in a right triangulated category with right semi-equivalence. We then go on to prove the characterisations of right triangulated categories as (co-)aisles of (co-)t-structures of Theorems \ref{intthm:coaisle} and \ref{intthm:aisle} and use these to describe related classes of (co-)t-structures. 
We end in Section \ref{sec:aislesquotients} by proving Theorem \ref{intthm:aislequotient} using the right triangulated quotient categories from Section \ref{sec:rightriextri}.

We note that, throughout, dual results for left triangulated categories (with left semi-equivalence) hold but remain unstated. 
Categories are assumed to be additive and idempotent complete. When we say an additive subcategory, we mean a subcategory that is closed under isomorphisms, direct sums and direct summands. 

\section{Background} \label{sec:background}

\subsection{Relative homological algebra}

We briefly recall some basic definitions from relative homological algebra introduced in \cite{Hochschild1956} and state some useful properties. 
Let $\mathcal{C}$ be an additive category and let $\mathsf{Mor}_\mathcal{C}$ denote the category of morphisms in $\mathcal{C}$.

\begin{defn}
Let $\mathcal{D} \subset \mathsf{Ob}(\mathcal{C})$ be a class of objects. A morphism $f: A \to B$ in $\mathcal{C}$ is \textit{$\mathcal{D}$-monic} all morphisms $A \to D$ with $D \in \mathcal{D}$ factor through $f$
\[ \begin{tikzcd} & A \arrow[r, "f"] \arrow[d, "\forall"'] & B \arrow[dl, "\exists", dashed] \\
{} \arrow[r, phantom, " \mathcal{D} \ni" marking, near end] &D.  & {} 
\end{tikzcd} \]
or, equivalently, $\C(f, -)|_\D : \C(B,-)|_\D \to \C(A,-)|_\D$ is an epimorphism. By $\mathsf{Mon}(\mathcal{D})$ we denote the class of all $\mathcal{D}$-monic morphisms in $\mathcal{C}$. We define the notion of a $\mathcal{D}$-epic morphism and the class $\mathsf{Epi}(\mathcal{D})$ dually.  

Similarly, for a class of morphisms $\omega \subset \mathsf{Ob}(\mathsf{Mor}_\mathcal{C})$, an object $J \in \mathcal{C}$ is \textit{$\omega$-injective} if for every morphism $f: A \to B$ in $\omega$, all morphisms $A \to J$ factor through $f$ \[ \begin{tikzcd}[sep= large] A \arrow[r, "\forall f \in \omega"] \arrow[d, "\forall"'] & B \arrow[dl, "\exists", dashed] \\ J. & \end{tikzcd} \]
or, equivalently, $\C(f, J)$ is an epimorphism for all $f \in \omega$. By $\mathsf{Inj}(\omega)$ we denote the class of all $\omega$-injective objects in $\mathcal{C}$. We define the notion of an $\omega$-projective object and the class $\mathsf{Proj}(\omega)$ dually.
\end{defn}

\begin{exmp} 
\be 
\item $\mathcal{C}$-monics are precisely sections.
\item $\mathsf{Inj}{\mathcal{C}}$-monics are monomorphisms.
\item A left $\D$-approximation \cite{auslander1991applications} of an object $A$ is just a $\D$-monic morphism $A \to B$ such that $B \in \D$. 
\item In an extriangulated category (defined in the next section) $(\C, \EE, \fs)$ the $\EE$-injectives are precisely $\mathsf{Inj}(\{\EE\text{-inflations}\})$. 
\ee
\end{exmp}

\begin{defn}
A commutative square 
\[ \begin{tikzcd} A \ar[r, "x"] \ar[d, "f"'] & B \ar[d, "f'"] \\ A' \ar[r, "x'"'] & B' \end{tikzcd}\] in $\C$ is a \textit{weak pushout} if for all pairs of morphisms $g:A' \to C$, $y:B \to C$ such that $gf=yx$, there exists a (not necessarily unique) morphism $h:B' \to C$ such that $hf' = y$ and $hx'=g$:
\[ \begin{tikzcd} A \ar[r, "x"] \ar[d, "f"'] & B \ar[d, "f'"] \ar[ddr, bend left, "y"] & \\ A' \ar[r, "x'"'] \ar[drr, bend right, "g"'] & B' \ar[dr, dashed, "\exists h"] & \\ & & C. \end{tikzcd}\]
\end{defn}

We collect some useful properties.  

\begin{lem} \label{biglez} Let $\mathcal{D}$ be a class of objects in $\mathcal{C}$ and $\omega$ a class of morphisms. Then the following hold
\be 
\item $\mathsf{Mon}(\mathcal{D})$ is closed under composition and retracts;
\item $\mathsf{Mon}(\mathcal{D})$ is left divisive, that is, $gf \in \mathsf{Mon}(\mathcal{D})$ implies that $f \in \mathsf{Mon}(\mathcal{D})$; 
\item  $\mathsf{Mon}(\mathcal{D})$ is closed under weak pushouts. 
\ee
\end{lem}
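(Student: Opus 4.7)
The plan is to prove each part by chasing the defining factorisation property of $\D$-monics directly from the definition; none of the three parts should require more than a single diagram chase, so the main challenge is really bookkeeping rather than insight.

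For (a), closure under composition is immediate: given composable $\D$-monics $f:A\to B$ and $g:B\to C$ and a test morphism $h:A\to D$ with $D\in\D$, first factor $h$ through $f$ as $h = h'f$ using that $f$ is $\D$-monic, and then factor $h'$ through $g$ as $h' = h''g$ using that $g$ is $\D$-monic, which gives $h = h''\circ gf$. For closure under retracts, I would take $f$ to be a retract of a $\D$-monic $g$ in $\mathsf{Mor}_\C$, witnessed by sections $(i_1,i_2)$ and retractions $(r_1,r_2)$ with $r_ki_k = 1$, $i_2f = gi_1$ and $fr_1 = r_2 g$. For a test morphism $h:A\to D$, I precompose with $r_1$ to get $hr_1:X\to D$, factor this through $g$ as $hr_1 = \tilde h g$, and then check that $\tilde h i_2:B\to D$ does the job via $\tilde h i_2 f = \tilde h g i_1 = hr_1i_1 = h$.

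For (b), given $gf\in\mathsf{Mon}(\D)$ with $f:A\to B$, $g:B\to C$ and a morphism $h:A\to D$ into $\D$, the $\D$-monicity of $gf$ produces a factorisation $h = k\circ gf$ for some $k:C\to D$; then $h = (kg)f$ exhibits $h$ as factoring through $f$, so $f$ is $\D$-monic.

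For (c), consider a weak pushout square with horizontal map $x:A\to B$ in $\mathsf{Mon}(\D)$ and vertical $f:A\to A'$; I want to show the opposite edge $x':A'\to B'$ is $\D$-monic. Given $h:A'\to D$ with $D\in\D$, the composite $hf:A\to D$ factors through $x$ as $hf = yx$ because $x$ is $\D$-monic. The pair $(h,y)$ then satisfies the compatibility $hf = yx$ required to invoke the weak pushout property, yielding a (not necessarily unique) $k:B'\to D$ with $kx' = h$, and hence $x'\in\mathsf{Mon}(\D)$. The only subtlety here, which I should emphasise in the writeup, is that the weak pushout only yields existence (not uniqueness) of $k$, but since the notion of $\D$-monic itself only requires existence of a factorisation, this is enough.
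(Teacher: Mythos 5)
Your proposal is correct and follows essentially the same route as the paper: the verifications of (a) and (b) are the standard factorisation chases the paper leaves to the reader, and your argument for (c) — factoring the composite $hf$ through the $\D$-monic $x$ and then invoking the weak pushout property to produce the required factorisation through $x'$ — is exactly the paper's proof. Your remark that mere existence (not uniqueness) of the induced morphism suffices is a fair point of emphasis and matches the paper's use of the weak pushout.
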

\begin{proof}
(a) and (b) are easily verified. Let us show (c). Let \[ \begin{tikzcd} A \ar[r, "x"] \ar[d, "f"'] & B \ar[d, "f'"] \\ A' \ar[r, "x'"'] & B' \end{tikzcd}\] be a weak pushout square in $\C$ with $x \in \mathsf{Mon}\D$. We must show that $x' \in \mathsf{Mon}\D$. To this end, let $g: A' \to D$ be a morphism with $D \in \D$. Then, as $x$ is $\D$-monic, there exists a morphism $y: B \to D$ such that $yx = gf$. Now, by the weak pushout property there exists a morphism $h:B'\to C$ such that $hx'=g$ as required. 
\end{proof}

The interested reader may look at  \cite{maranda1964injective} and \cite{rump2009triadic} for more properties of $\D$-monics and $\omega$-injectives. For other applications of these notions, look, for instance in \cite{beligiannis2007homological, Enochs2000, hugel2015silting}.

\subsection{Extriangulated categories}
In this section we put the background on extriangulated categories following \cite[
Section 2]{nakaoka2019extriangulated} where such categories were introduced. Let $\C$ be an additive category  and $\EE: \C^{op} \times \C \to \mathsf{Ab}$ be an additive bifunctor.

\begin{defn} For any $A,C \in \C$, two pairs of composable morphisms in $\C$ 
\[ A\overset{x}{\longrightarrow}B\overset{y}{\longrightarrow}C\ \ \text{and}\ \ A\overset{x^{\prime}}{\longrightarrow}B^{\prime}\overset{y^{\prime}}{\longrightarrow}C \]
 are \textit{equivalent} if there exists an isomorphism $b: B \to B'$ such that 
 \[ \begin{tikzcd}[ampersand replacement = \&]  A \arrow[r, "x"] \ar[d, equal] \& B \arrow[d, "b", "\cong"'] \arrow[r, "y"] \& C \ar[d, equal] \\   A \arrow[r, "x'"'] \&  B' \arrow[r, "y'"'] \& C \end{tikzcd} \] commutes. 
 We denote the equivalence class of $A\overset{x}{\longrightarrow}B\overset{y}{\longrightarrow}C$ by $[A\overset{x}{\longrightarrow}B\overset{y}{\longrightarrow}C]$, and  by $\S(C,A)$ we denote the class of all such equivalence classes.
\end{defn}


\begin{notation}
 For $a:A \to A'$ write $a_{\ast} = \EE(-,a):\EE(-,A) \to \EE(-,A')$. Similarly, we write $c^{\ast} = \EE(c,-): \EE(C',-) \to \EE(C,-)$ for $c:C \to C'$.     
\end{notation}

\begin{defn} \cite[Definitions 2.9, 2.10]{nakaoka2019extriangulated}
An assignment $\fs_{C, A}: \EE(C, A) \to \mathcal{S}(C, A)$ for all $C, A \in \C$ is an \textit{additive realisation of $\EE$} if it satisfies the following axioms.
\be 
\item[(S1)]  $\fs(0) = [A \to A \oplus C \to C]$.
   
\item[(S2)] $\fs(\delta)\oplus \fs(\delta ') = \fs(\delta \oplus \delta ') \in \S(C \oplus C', A \oplus A')$ for $\delta \in \EE(C, A)$, $\delta' \in \EE(C', A')$.
\item[(S3)]  For all $\fs(\delta) = [A\overset{x}{\longrightarrow}B\overset{y}{\longrightarrow}C] $ and $\fs(\delta) = [A'\overset{x'}{\longrightarrow}B'\overset{y'}{\longrightarrow}C'] $ such that $a_{\ast}\delta = c^{\ast}\delta '$ for $a: A \to A'$ and $c: C \to C'$ (that is, $(a,c): \delta \to \delta '$ is a \textit{morphism of $\EE$-extensions}). There is a commutative diagram in $\C$
    \[\begin{tikzcd}[ampersand replacement = \&] A \arrow[r, "x"] \arrow[d, "a"] \& B \arrow[d, dashed, "\exists b"] \arrow[r, "y"] \& C \arrow[d, "c"]    \\ A' \arrow[r, "x'"] \& B' \arrow[r, "y'"] \& C'     \end{tikzcd}\] and in this case we say that the triple $(a,b,c)$ \textit{realises the morphism of $\EE$-extensions $(a,c): \delta \to \delta '$}.
\ee
\end{defn}

\begin{notation}
 If $\fs(\delta) = [A\overset{x}{\longrightarrow}B\overset{y}{\longrightarrow}C]$ we may also write 
    \[ \begin{tikzcd}[ampersand replacement = \&] A \arrow[r, "x"] \& B \arrow[r, "y"] \& C \arrow[r, "\delta", dashed] \& {} \end{tikzcd} \] and call this an \textit{extriangle}. In this situation we also call $x$ an \textit{$\EE$-inflation} and $y$ an \textit{$\EE$-deflation}. We may also say that $y:B \to C$ is the \textit{cone of $x$} and $x:A \to B$ is the \textit{co-cone of $y$}. 
\end{notation}

\begin{defn} \label{defn:extricat} \cite[Definition 2.12]{nakaoka2019extriangulated} A triple $(\C, \bE, \fs)$ is an \textit{extriangulated category} if it satisfies the following conditions. In this case we call the pair $(\EE, \fs)$  an \textit{external triangulation of $\C$} and $\fs$ an \textit{$\bE$-triangulation of $\C$}.
\begin{enumerate}
 \itemindent=3.5pt
   \item[(ET1)] $\bE: \C^{op} \times \C \to \mathsf{Ab}$ is an additive bifunctor.
   \item[(ET2)] $\fs$ is an additive realisation of $\bE$.
  \item[(ET3)] For all commutative diagrams with extriangles as rows
  
   \[\begin{tikzcd}[ampersand replacement = \&] A \arrow[r] \arrow[d, "a"] \& B \arrow[r] \arrow[d, "b"] \& C \arrow[r, dashed, "\delta"] \arrow[d, "\exists c"] \& {} \\ A' \arrow[r] \& B' \arrow[r] \& C' \arrow[r, dashed, "\delta'"] \& {} \end{tikzcd}\]
 there exists $c:C \to C'$ making the diagram commute and such that $(a,c): \delta \to \delta '$ is a morphism of extriangles.
 \item[ \quad \; (ET3)\textsuperscript{op}] Dual to (ET3).

   \item[(ET4)] For any pair of extriangles of the form
\[ \begin{tikzcd}[sep = scriptsize] A \ar[r, "f"] & B \ar[r, "f'"] & D \ar[r, dashed, "\delta"] & {} \end{tikzcd}, \qquad  \begin{tikzcd}[sep = scriptsize] B \ar[r, "g"] & C \ar[r, "g'"] & F \ar[r, dashed, "\delta'"] & {} \end{tikzcd} \] 
 there exists a commutative diagram 
   \[\begin{tikzcd}[ampersand replacement = \&] A \arrow[r, "f"] \arrow[d, equal] \&  B \arrow[r, "f'"] \arrow[d, "g"] \& D \arrow[d, "h"] \arrow[r, dashed, "\delta"] \& {} \\ A \arrow[r, "gf"] \& C \arrow[d, "g'"]  \arrow[r, "e"] \& E \arrow[r, "\delta''",  dashed] \arrow[d, "h'"] \& {} \\ \& F \arrow[d, "\delta'", dashed]  \arrow[r, equal] \& F \arrow[d, "(f')_{\ast}\delta'", dashed] \& \\ \& {} \& {} \& \end{tikzcd} \]
  with columns and rows being extriangles such that $h^{\ast}\delta''= \delta$ and $f_{\ast}\delta '' = (h')^{\ast} \delta'$ (that is, $(f,h'): \delta'' \to \delta'$ is a morphism of extriangles).
   \item[(ET4)\textsuperscript{op}] Dual to (ET4).
\end{enumerate}
\end{defn}

For the rest of this section, let $(\mathcal{C}, \EE, \fs)$ be an extriangulated category.

\begin{rem} \label{rem:extnclosedextri} \cite[Remark 2.18]{nakaoka2019extriangulated}   Let $\mathcal{X} \subseteq \C$ be a subcategory of $\C$ that is closed under $\EE$-extensions, that is, for all extriangles 
\[ \begin{tikzcd}[ampersand replacement = \&] A \arrow[r] \& B \arrow[r] \& C \arrow[r,  dashed] \& {} \end{tikzcd} \] the implication $A, C \in \mathcal{X} \Rightarrow B \in \mathcal{X}$ holds. Then $(\mathcal{X}, \EE|_{\mathcal{X}}, \fs|_{\mathcal{X}})$ is an extriangulated category. 
\end{rem}

\begin{lem} \label{lem:extriles} \cite[Corollary 3.12]{nakaoka2019extriangulated}
Let $ \begin{tikzcd}[ampersand replacement = \&] A \arrow[r, "x"] \& B \arrow[r, "y"] \& C \arrow[r, "\delta", dashed] \& {} \end{tikzcd}$ be an extriangle. Then there are long exact sequences 
\[ \begin{tikzcd}[ampersand replacement = \&, row sep = small] 
\C(-,A) \arrow[r, "{\C(-,x)}"] \& \C(-,B) \arrow[r, "{\C(-,y)}"] \& \C(-,C) \arrow[r, "\delta_\#"] \& 
\EE(-,A)  \arrow[r, "{\EE(-,x)}"] \& \EE(-,B) \arrow[r, "{\EE(-,y)}"] \& \EE(-,C) \\
 \C(C,-) \arrow[r, "{\C(y,-)}"] \& \C(-,B) \arrow[r, "{\C(x,-)}"] \& \C(-,A) \arrow[r, "\delta^\#"] \& \EE(C,-)  \arrow[r, "{\EE(y,-)}"] \& \EE(B,-) \arrow[r, "{\EE(x,-)}"] \& \EE(A,-)   
\end{tikzcd} \] of functors and natural transformation in the functor categories $[\C^{\text{op}}, \mathsf{Ab}]$ and $[\C, \mathsf{Ab}]$ respectively. The maps $\delta_\#$ and $\delta^\#$ are given  at $X \in \C$ by  \begin{align*}
(\delta_\#)_X : \C(X, C)& \to \EE(X,A), &(\delta^\#)_X: \C(A, X)&\to \EE(C,X) \\ f &\mapsto f^\ast \delta& g &\mapsto g_\ast \delta.
\end{align*}
\end{lem}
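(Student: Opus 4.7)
The plan is to verify the claimed long exact sequences directly from the axioms (ET1)--(ET4) and their duals. By the dual symmetry of the extriangulated axioms, it suffices to verify one of the two sequences, say the covariant one. Fixing $X \in \C$, this reduces to checking exactness of
\[ \C(X,A) \xrightarrow{x_*} \C(X,B) \xrightarrow{y_*} \C(X,C) \xrightarrow{\delta_\#} \EE(X,A) \xrightarrow{x_*} \EE(X,B) \xrightarrow{y_*} \EE(X,C) \]
at each of the four interior positions; naturality in $X$ then follows automatically from the explicit formula $\delta_\#(g) = g^*\delta$ together with the bifunctoriality of $\EE$ asserted by (ET1).

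First I would establish that the sequence is a complex: $yx = 0$ in $\C(A,C)$, $x_*\delta = 0$ in $\EE(C,B)$, and $y^*\delta = 0$ in $\EE(A,C)$. Each of these identities follows by applying (ET3) or its dual to a morphism of $\EE$-extensions built between $\delta$ and an appropriate split extriangle provided by (S1)--(S2), and reading off the forced commutativity. Exactness at the two $\C$-positions is then a weak (co)kernel argument: for $\C(X,B)$, given $g \colon X \to B$ with $yg = 0$, one produces $h \colon X \to A$ with $xh = g$ by applying (ET3) or its dual to a suitably chosen morphism of extensions involving $g$ and a split extriangle on $X$; for $\C(X,C)$, if $g^*\delta = 0$ then by (S1) the realization of $g^*\delta$ is split, and applying the realization axiom (S3) to the morphism of extensions $(\mathrm{id}_A, g) \colon g^*\delta \to \delta$ yields a lift $h \colon X \to B$ with $yh = g$.

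The main obstacle is exactness at $\EE(X,A)$ and, symmetrically, $\EE(X,B)$. Given $\eta \in \EE(X,A)$ with $x_*\eta = 0$, I would realize $\eta$ as an extriangle $A \xrightarrow{u} Y \xrightarrow{v} X$; the vanishing $x_*\eta = 0$ says that the pushout extriangle realizing $x_*\eta$ is split, giving a retraction of its middle object onto $B$. Applying (ET4) to a suitable composable pair of $\EE$-inflations involving $x$ and $u$ then produces a $3 \times 3$ commutative diagram with extriangles as rows and columns, from whose corner one extracts the needed morphism $g \colon X \to C$ and verifies $g^*\delta = \eta$ via the formula for $\delta_\#$. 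The delicate part is identifying precisely which morphism from the (ET4) diagram yields the preimage, and confirming the resulting identity of extensions; once done, the symmetric case at $\EE(X,B)$ follows by an analogous but slightly simpler diagram chase.
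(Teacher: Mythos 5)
The paper itself offers no argument for this lemma --- it is quoted directly from Nakaoka--Palu (their Corollary 3.12) --- so your attempt has to stand on its own. The routine parts of your sketch are sound and match the standard arguments: reduction to one sequence by self-duality of the axioms, pointwise verification with naturality coming from $\delta_\#(g)=g^\ast\delta$ and bifunctoriality of $\EE$, the complex property, exactness at $\C(X,B)$ via (ET3)$^{\mathrm{op}}$ applied against the split extriangle $X\xrightarrow{1_X}X\to 0$, and exactness at $\C(X,C)$ via (S1) and (S3) applied to the morphism of extensions $(1_A,g)\colon g^\ast\delta\to\delta$.

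The genuine gap is at the two $\EE$-positions, exactly where you defer the details, and the plan you do give there is misdirected. For $\eta\in\EE(X,A)$ with $x_\ast\eta=0$, realised by $A\xrightarrow{u}Y\xrightarrow{v}X$, the morphisms $x\colon A\to B$ and $u\colon A\to Y$ share the domain $A$, so there is no composable pair of $\EE$-inflations ``involving $x$ and $u$'' to feed into (ET4); in fact (ET4) is not needed at this position at all. Instead, realise the morphism of extensions $(x,1_X)\colon\eta\to x_\ast\eta=0$ against the split realisation $B\to B\oplus X\to X$: its middle component yields $m\colon Y\to B$ with $mu=x$, and (ET3) applied to the commuting square $(1_A,m)$ between the realisations of $\eta$ and $\delta$ produces $g\colon X\to C$ with $g^\ast\delta=(1_A)_\ast\eta=\eta$. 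The case you dismiss as ``analogous but slightly simpler'' --- exactness at $\EE(X,B)$ --- is the one that genuinely requires (ET4): for $\theta\in\EE(X,B)$ with $y_\ast\theta=0$, realised by $B\xrightarrow{u}Z\xrightarrow{v}X$, apply (ET4) to the composable inflations $A\xrightarrow{x}B\xrightarrow{u}Z$; the third column $C\to E\xrightarrow{h'}X$ of the resulting diagram realises $y_\ast\theta=0$, hence splits, and a section $s$ of $h'$ gives $\eta:=s^\ast\delta''\in\EE(X,A)$ with $x_\ast\eta=s^\ast(h')^\ast\theta=\theta$, using the compatibility $x_\ast\delta''=(h')^\ast\theta$ that (ET4) provides. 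So the toolbox you name is the right one, but the roles of (ET3) and (ET4) at the two $\EE$-positions are interchanged, and as written the crucial step cannot be carried out.
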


\begin{lem}  \cite[Proposition 1.20]{Liu2019} \label{lem:LN1.20} Let $\delta \in \EE(C, A)$ be an extriangle and $a \in \C(A, A')$ a morphism. Let $(a, b, 1)$ be a triple of morphisms realising the morphism of extriangles $(a,1): \delta \to a_{\ast}\delta$
\begin{equation} \label{weakpushoutextri}
\begin{tikzcd}[ampersand replacement = \&] A \arrow[r, "x"] \arrow[d, "a"] \& B \arrow[r, "y"] \arrow[d, "b"] \& C \arrow[r, dashed, "\delta"] \arrow[d, equal] \& {} \\ A' \arrow[r, "x'"] \& B' \arrow[r, "y'"] \& C \arrow[r, dashed, "a_\ast \delta"] \& {.} 
\end{tikzcd}
\end{equation}  
Then the sequence $\begin{tikzcd} A \arrow[r, "{\bsm -a \\ x \esm}"] & A' \oplus B \arrow[r, "{\bsm y \: b \esm}"] & C \ar[r, dashed, "y'^\ast\delta"] & {} \end{tikzcd}$ is an extriangle. 
\end{lem}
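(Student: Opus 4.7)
The plan is to realise the claimed extriangle directly as the $\fs$-image of $y'^\ast \delta \in \EE(B',A)$, and to identify the middle term of that realisation with $A'\oplus B$ equipped with the prescribed morphisms, using the weak-pullback property of the square obtained from $(a,b,1)$.

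First, by (ET2), pick any extriangle $A\xrightarrow{u} E\xrightarrow{v} B'\dashrightarrow y'^\ast\delta$. By the very definition of the pullback extension, $(1_A, y')$ is a morphism of $\EE$-extensions from $y'^\ast\delta$ to $\delta$, so (ET3) supplies a morphism $e\colon E\to B$ fitting into a commutative diagram with $eu=x$ and $ye=y'v$. A diagram chase using Lemma \ref{lem:extriles} — applied to the two long exact sequences coming from $A\to E\to B'\dashrightarrow y'^\ast\delta$ and from $\delta$ — shows that the resulting square enjoys a weak-pullback property: given any $(t\colon T\to B,\, s\colon T\to B')$ with $yt=y's$, there exists $h\colon T\to E$ with $eh=t$ and $vh=s$ (the existence of some lift of $s$ follows from $(y'^\ast\delta)_{\#}(s) = (y's)^\ast \delta = (yt)^\ast \delta = 0$, and the lift can be corrected by an element of $\C(T,A)$ along $u$ so that $eh=t$ also holds).

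Second, I would instantiate this property twice: at $(0, x')\colon A'\to B\oplus B'$ (compatible because $y'x'=0$ in the extriangle $a_\ast\delta$) to obtain $\alpha\colon A'\to E$ with $e\alpha=0$, $v\alpha=x'$; and at $(1_B, b)\colon B\to B\oplus B'$ (compatible because $y=y'b$ by the given square) to obtain $\beta\colon B\to E$ with $e\beta=1_B$, $v\beta=b$. Packaging these into $\theta=(\alpha\ \beta)\colon A'\oplus B\to E$, I then need to verify that $\theta$ is an isomorphism and that $u$ and $v$ correspond, via $\theta$, to $\binom{-a}{x}$ and $(x',\,b)$ respectively. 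The identity $v\theta=(x',b)$ holds on the nose, and $e\theta=(0,\,1_B)$ is a split epimorphism. Checking the first map requires comparing $u$ with $\theta\binom{-a}{x}$: both satisfy $eh=x$ and $vh=0$ (using $-x'a+bx=0$, i.e.\ the pushout equation, for the second), so they differ by the weak-pullback ambiguity, and one may arrange equality on the nose by replacing the originally chosen $u$ with $\theta\binom{-a}{x}$ (noting that an extriangle is only defined up to the equivalence relation introduced in the definition of $\S(C,A)$, so such a replacement does not affect the extension class).

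The genuine content, and the main obstacle, is the proof that $\theta$ is an isomorphism. The cleanest route is to apply the five-lemma-type argument of \cite{nakaoka2019extriangulated} to the morphism of prospective extriangles with outer maps $1_A$ and $1_{B'}$: one first verifies that the candidate sequence $A\to A'\oplus B\to B'$ induces long exact sequences under $\C(-,M)$ and $\EE(-,M)$ (this reduces to the pushout equation $x'a=bx$ and to $y'x'=0=yx$), and then concludes from Lemma \ref{lem:extriles} that $\theta$ is an isomorphism by the Yoneda lemma. Alternatively, one may construct an explicit inverse by using the weak pullback property applied to pairs of the form $(e,v)\colon E\to B\oplus B'$ corrected by a morphism coming from the pushout relation; this is more laborious but avoids invoking the five-lemma. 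Either way the argument is routine once the morphism $\theta$ and the compatibility identities above are in place.
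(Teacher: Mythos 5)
This lemma is not proved in the paper at all: it is imported verbatim from \cite[Proposition 1.20]{Liu2019}, so there is no internal proof to compare with; I can only assess your reconstruction on its own terms. Your overall strategy --- realise $y'^{\ast}\delta$ by some $A\xrightarrow{u}E\xrightarrow{v}B'$, check via Lemma \ref{lem:extriles} that the square formed by $e,v,y,y'$ is a weak pullback (this part of your argument is correct), and then build $\theta=(\alpha\ \ \beta)\colon A'\oplus B\to E$ with $v\theta=(x'\ \ b)$ --- is reasonable and close in spirit to how such statements are usually attacked. But the two steps that carry the real content are gaps. For the compatibility with the inflation, you only get that $u$ and $\theta\bsm -a \\ x \esm$ both satisfy $e(-)=x$ and $v(-)=0$; the long exact sequence then gives $\theta\bsm -a \\ x\esm = u(1_A+g)$ for some $g\colon A\to A$ with $xg=0$, and nothing forces $g=0$, nor does the weak-pullback freedom in choosing $\alpha,\beta$ (which only changes $\theta\bsm -a\\ x\esm$ by $u(-\rho a+\tau x)$ with $x\rho=0=x\tau$) obviously allow you to remove it. Your proposed fix --- replace $u$ by $\theta\bsm -a \\ x\esm$, ``since an extriangle is only defined up to equivalence'' --- is not legitimate: the equivalence relation defining $\fs(y'^{\ast}\delta)$ fixes the end objects and permits only an isomorphism of the middle term commuting with \emph{both} structure maps. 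Exchanging the inflation $u$ for a different morphism yields a sequence that need not lie in the class $\fs(y'^{\ast}\delta)$ (it need not even be an inflation), so after the replacement you are no longer comparing your candidate with a realisation of $y'^{\ast}\delta$, which is exactly what has to be shown.

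The second gap is the isomorphy of $\theta$, which you yourself identify as the main obstacle but do not close. The five-lemma route requires the candidate sequence $A\to A'\oplus B\to B'$ to induce the long exact sequences of Lemma \ref{lem:extriles}, including exactness at the $\EE$-terms with connecting maps given by $y'^{\ast}\delta$; that is essentially equivalent to the lemma itself, and it certainly does not ``reduce to the pushout equation $x'a=bx$ and to $y'x'=0=yx$'' --- those identities only make the relevant sequences complexes. The alternative of an explicit inverse is not carried out, and the obstruction is again the ambiguity by morphisms factoring through $u$. So, as written, the proposal produces the auxiliary morphisms and identities but not the statement; the missing ideas (getting a comparison map that is an honest morphism of realisations, and proving it invertible) are precisely where the axioms (ET4)/(ET4)$^{\mathrm{op}}$-style arguments of Nakaoka--Palu and Liu--Nakaoka have to enter. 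One point in your favour: you correctly read the statement with third term $B'$ and deflation $(x'\ \ b)$ --- the displayed $\bsm y \; b \esm$ into $C$ in the paper is a typo, as the extension class $y'^{\ast}\delta\in\EE(B',A)$ indicates.
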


\begin{cor} \label{cor:weakpushout}
The left hand square in Diagram \ref{weakpushoutextri} is a weak pushout and weak pullback. 
\end{cor}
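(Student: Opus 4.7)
The plan is to read off both universal properties from the extriangle produced by Lemma \ref{lem:LN1.20}, using the long Hom sequences of Lemma \ref{lem:extriles}. That extriangle has middle term $A' \oplus B$, with incoming morphism $\bsm -a \\ x \esm$ and outgoing morphism of the form $\bsm x' & b \esm$; their vanishing composition recovers exactly the commuting relation $x' a = b x$ of the given square. A single extriangle will therefore supply both the weak pushout and the weak pullback property simultaneously.

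For the weak pushout, suppose $Y \in \C$ is equipped with morphisms $g : A' \to Y$ and $w : B \to Y$ satisfying $g a = w x$. Assemble them into the row $\bsm g & w \esm : A' \oplus B \to Y$; the hypothesis is exactly $\bsm g & w \esm \circ \bsm -a \\ x \esm = -ga + wx = 0$. Applying $\C(-,Y)$ to the extriangle of Lemma \ref{lem:LN1.20} and invoking the contravariant long exact sequence of Lemma \ref{lem:extriles} yields exactness at $\C(A' \oplus B, Y)$, so $\bsm g & w \esm$ factors through $\bsm x' & b \esm$ via some $h : B' \to Y$. Expanding the matrix equation $h \circ \bsm x' & b \esm = \bsm g & w \esm$ gives $h x' = g$ and $h b = w$, which is the weak pushout property.

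The weak pullback assertion is the formally dual argument. Given $g : Y \to A'$ and $w : Y \to B$ with $x' g = b w$, form the column $\bsm -g \\ w \esm : Y \to A' \oplus B$; then $\bsm x' & b \esm \circ \bsm -g \\ w \esm = -x' g + b w = 0$. Applying $\C(Y,-)$ to the same extriangle and using the covariant long exact sequence of Lemma \ref{lem:extriles}, one obtains $h : Y \to A$ such that $\bsm -a \\ x \esm \circ h = \bsm -g \\ w \esm$, which unpacks to $a h = g$ and $x h = w$ directly.

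There is essentially no obstacle: once the extriangle of Lemma \ref{lem:LN1.20} is in hand, both universal properties are simply the exactness at the middle term of the two Hom sequences from Lemma \ref{lem:extriles}; the only mild care needed is the sign flip in the pullback case used to make the matrix composition vanish.
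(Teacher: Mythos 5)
Your proof is correct and is essentially the paper's own argument: the paper likewise deduces both universal properties from the extriangle of Lemma \ref{lem:LN1.20} together with the weak (co)kernel exactness supplied by Lemma \ref{lem:extriles}, and you have merely written out the matrix computations that the paper leaves implicit. You also correctly read the extriangle as $A \xrightarrow{\bsm -a \\ x \esm} A' \oplus B \xrightarrow{\bsm x' \; b \esm} B'$ (the paper's displayed version of Lemma \ref{lem:LN1.20} contains a typographical slip in the third term), which is exactly the form the paper's proof relies on.
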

\begin{proof}
By Lemma \ref{lem:extriles}, every $\EE$-deflation of an extriangle is a weak cokernel of the $\EE$-inflation and vice-versa. The claim now follows from Lemma \ref{lem:LN1.20}, since the weak pushout property is equivalent to $\bsm y \: b \esm :  A' \oplus B \to C$ being a weak cokernel of $\bsm -a \\ x \esm: A \to A' \oplus B$. 
\end{proof}

\begin{lem} \cite[Corollary 3.5]{nakaoka2019extriangulated} \label{NP3.5}
Let \[\begin{tikzcd}[ampersand replacement = \&] A \arrow[r, "x"] \arrow[d, "a"] \& B \arrow[r, "y"] \arrow[d, "b"] \& C \arrow[r, dashed, "\delta"] \arrow[d, "c"] \& {} \\ A' \arrow[r, "x'"] \& B' \arrow[r, "y'"] \& C' \arrow[r, dashed, "\delta'"] \& {} \end{tikzcd}\]
be a morphism of extriangles. Then the following are equivalent
\be 
\item $a$ factors through $x$;
\item $a_\ast \delta = c^\ast \delta' = 0$;
\item $c$ factors through $y'$.
\ee
\end{lem}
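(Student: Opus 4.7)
The plan is to deduce all three equivalences directly from the two long exact sequences in Lemma \ref{lem:extriles}, using the defining property of a morphism of extriangles, namely that $a_\ast \delta = c^\ast \delta'$ in $\EE(C, A')$.

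First I would prove (a) $\Leftrightarrow$ $a_\ast \delta = 0$. Applying the first long exact sequence of Lemma \ref{lem:extriles} to the extriangle $\delta$, evaluated at $X = A'$, yields an exact sequence
\[ \C(B, A') \xrightarrow{\C(x, A')} \C(A, A') \xrightarrow{(\delta^\#)_{A'}} \EE(C, A'), \]
where $(\delta^\#)_{A'}$ sends a morphism $g \in \C(A, A')$ to $g_\ast \delta$. Exactness at $\C(A, A')$ says that $a \in \ker(\delta^\#)_{A'}$ if and only if $a$ lies in the image of $\C(x, A')$, i.e. if and only if $a$ factors through $x$. Hence (a) is equivalent to $a_\ast \delta = 0$.

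Dually, I would prove (c) $\Leftrightarrow$ $c^\ast \delta' = 0$ by applying the second long exact sequence of Lemma \ref{lem:extriles} to the extriangle $\delta'$, evaluated at $X = C$:
\[ \C(C, B') \xrightarrow{\C(C, y')} \C(C, C') \xrightarrow{(\delta'_\#)_{C}} \EE(C, A'), \]
where the connecting map sends $f$ to $f^\ast \delta'$. Exactness at $\C(C, C')$ then gives that $c$ factors through $y'$ if and only if $c^\ast \delta' = 0$.

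Finally, I combine these with the morphism-of-extensions condition $a_\ast \delta = c^\ast \delta'$. Since the two elements of $\EE(C, A')$ coincide, one is zero if and only if both are zero, which is the content of (b). Therefore (a) $\Leftrightarrow a_\ast \delta = 0 \Leftrightarrow c^\ast \delta' = 0 \Leftrightarrow$ (c), and each of these is equivalent to (b). There is no real obstacle; the only care needed is selecting the correct evaluation points ($X = A'$ in the covariant sequence for $\delta$, $X = C$ in the contravariant sequence for $\delta'$) so that both connecting maps land in the same group $\EE(C, A')$, where the morphism-of-extensions identification lives.
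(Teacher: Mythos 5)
Your argument is correct: evaluating the covariant exact sequence of $\delta$ at $A'$ and the contravariant exact sequence of $\delta'$ at $C$, both connecting maps land in $\EE(C,A')$, where the identity $a_\ast\delta=c^\ast\delta'$ coming from the morphism of extriangles makes the three conditions equivalent. The paper itself only cites this result from Nakaoka--Palu, and your proof is essentially the standard one given there via the exact sequences of Lemma~\ref{lem:extriles}; the only slip is purely cosmetic, namely that you swap the labels ``first''/``second'' relative to the order in which the paper lists the two sequences.
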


\begin{defn}
Let $\C=(\C, \EE, \fs)$ be an extriangulated category. An object $P \in \C$ is \textit{$\EE$-projective} if $\EE(P, C) =0$ for all $C \in C$. By $\mathsf{Proj}_{\EE}\C$ we denote the subcategory of $\EE$-projective objects. We say that $\C$ has \textit{enough $\EE$-projectives} if for all $C \in \C$ there exists an extriangle 
 \[ \begin{tikzcd}[ampersand replacement = \&] A \arrow[r] \& P \arrow[r] \& C \arrow[r,  dashed] \& {} \end{tikzcd} \] with $P \in \mathsf{Proj}_{\EE}\C$. The notions of $\EE$-injectives and having enough $\EE$-injectives are defined dually. The subcategory of $\EE$-injective objects is denoted by $\mathsf{Inj}_{\EE}\C$. 
\end{defn}

\subsection{Right triangulated categories}

We begin by recalling in full the definition of a right triangulated (or suspended) category first introduced in \cite{Keller1987}. 

\begin{defn} \label{def:righttri}
Let $\mathcal{R}$ be an additive category and $\Sigma : \mathcal{R} \rightarrow \mathcal{R}$ an endofunctor. A \textit{right triangulation} of the pair $(\mathcal{R}, \Sigma)$ is a collection $\Delta$ of sequences of the form \[ \begin{tikzcd} A \arrow[r] & B \arrow[r]& C \arrow[r] &\Sigma A \end{tikzcd} \] in $\R$ that satisfy the following axioms.
\begin{enumerate}[label=(\alph*)]
\item[(R1)] \begin{enumerate}[label=(\roman*) ,leftmargin=*]
\item $\Delta$ is closed under isomorphisms. That is, for every commutative diagram 
\[ \begin{tikzcd} A \arrow[r] \arrow[d, "f"] & B \arrow[r] \arrow[d] & C \arrow[r] \arrow[d] &\Sigma A \arrow[d, "\Sigma f"] \\ A' \arrow[r] & B' \arrow[r] & C' \arrow[r] & \Sigma (A') \end{tikzcd} \] in $\mathcal{R}$ whose vertical arrows are isomorphisms, one row belongs to $\Delta$ if and only if the other row also belongs to $\Delta$.
\item For every $A \in \mathcal{R}$, the sequence \[ \begin{tikzcd} 0 \arrow[r] & A \arrow[r, "1_A"] & A  \arrow[r] & 0  \end{tikzcd} \] belongs to $\Delta$.
\item Every morphism $x: A \rightarrow B $ in $\mathcal{R}$ can be embedded into a sequence \[ \begin{tikzcd} A \arrow[r, "x"] & B  \arrow[r] & C_x \arrow[r] & \Sigma x \end{tikzcd} \] in $\Delta$. 
\end{enumerate}
\item[(R2)] If the sequence \[ \begin{tikzcd} A \arrow[r, "x"] & B \arrow[r, "y"]& C \arrow[r, "z"] &\Sigma A \end{tikzcd} \] is in $\Delta$ then so is the sequence \[ \begin{tikzcd} B \arrow[r, "y"]& C \arrow[r, "z"] &\Sigma A \arrow[r, "-\Sigma z"]& \Sigma B. \end{tikzcd} \]
\item[(R3)] Every commutative diagram \[ \begin{tikzcd} A \arrow[r] \arrow[d, "f"] & B \arrow[r] \arrow[d] & C \arrow[r]  &\Sigma A \arrow[d, "\Sigma f"] \\ A' \arrow[r] & B' \arrow[r] & C' \arrow[r] & \Sigma (A') \end{tikzcd} \] in $\mathcal{R}$ whose rows belong to $\Delta$ can be extended to a commutative diagram \[ \begin{tikzcd} A \arrow[r] \arrow[d, "f"] & B \arrow[r] \arrow[d] & C \arrow[r] \arrow[d, dashed] &\Sigma A \arrow[d, "\Sigma f"] \\ A' \arrow[r] & B' \arrow[r] & C' \arrow[r] & \Sigma (A'). \end{tikzcd} \]
\item[(R4)] Let \[ \begin{tikzcd} A \arrow[r, "x"] &B \arrow[r, "x'"] &X \arrow[r, "x''"] & \Sigma A,   &B \arrow[r, "y"] &C \arrow[r, "y'"] &Y \arrow[r, "y''"]& \Sigma B, \end{tikzcd}  \] and \[ \begin{tikzcd} A \arrow[r, "y  x"] &C \arrow[r, "z'"] &Z \arrow[r, "z''"] & \Sigma A\end{tikzcd} \] be sequences in $\Delta$. Then there is a commutative diagram 
\[ \begin{tikzcd} A \arrow[r, "x"] \arrow[d, equal]&B \arrow[r, "x'"] \arrow[d, "y "] &X \arrow[r, "x''"] \arrow[d, dashed, "\alpha "] & \Sigma A  \arrow[d, equal] 
				\\   A \arrow[r, "y x"] \arrow[d, "x"]&B \arrow[r, "z'"] \arrow[d, equal]&Y \arrow[r, "z''"] \arrow[d, dashed, "\beta "] &\Sigma A \arrow[d, "\Sigma x"]
					\\ B \arrow[r, "y"] &C \arrow[r, "y'"] &Z \arrow[r, "y''"] \arrow[d, dashed, "\gamma "] &\Sigma B \arrow[dl,  "\Sigma (x') "] 
					\\ 					& 					& \Sigma X 				 \end{tikzcd} \] in $\mathcal{R}$ such that the dotted column belongs to $\Delta$.
\end{enumerate} If $\Delta$ is a right triangulation of $(\R, \Sigma)$, then the triple $(\R, \Sigma, \Delta)$ is called a \textit{right triangulated category} and the sequences in $\Delta$ are called \textit{right triangles}. Following \cite{assem1998right}, if the functor $\Sigma$ is fully faithful and its image, $\Sigma \R$, is closed under extensions we call $\Sigma$ a \textit{right semi equivalence} and $\mathcal{R}$ a \textit{right triangulated category with right semi equivalence}. 
\end{defn}

\begin{defn} Let $(\R, \Sigma, \Delta)$ and $(\R', \Sigma ', \Delta ')$ be right triangulated categories. An additive functor $F: \R \to \R'$ is a \textit{right triangle functor} if \be 
\item There is a natural isomorphism $\begin{tikzcd} \zeta : F \Sigma \arrow[r, "\cong"] & \Sigma ' F \end{tikzcd}$;
\item For all right triangles $\begin{tikzcd} A \arrow[r, "x"] & B \arrow[r, "y"]& C \arrow[r, "z"] &\Sigma A \end{tikzcd}$ in $\Delta$, the sequence $$\begin{tikzcd} FA \arrow[r, "Fx"] & FB \arrow[r, "Fy"]& FC \arrow[r, "\zeta_A \circ Fz "] &\Sigma 'FA \end{tikzcd}$$ is in $\Delta '$.
\ee
\end{defn}

\begin{rem} 
Let $\R = (\R, \Sigma, \Delta)$ be a right triangulated category such that $\Sigma \R$ is closed under extensions. Then the subcategory $\Sigma \R$ naturally inherits the structure of a right triangulated category. Clearly, if $\Sigma$ is a right semi-equivalence then $\Sigma \R$ is also a right triangulated category with right semi-equivalence. We will see in Section~\ref{sec:aislesquotients} examples where $\Sigma \R$ is a right triangulated category with right semi-equivalence but $\R$ is not. 
\end{rem}

For homological properties of right triangulated categories we point the reader to \cite{assem1998right}. 


We finish this subsection by noting that there are no finite right triangulated categories with right semi-equivalence that are not triangulated. This is one of the many ways to see that any additively finite triangulated category admits no non-trivial (co-)t-structures. We refer to \cite{Amiot2007}
for a discussion of such categories.

\begin{lem} \label{lem:righttrifinite}
Let  $\R = (\R, \Sigma, \Delta)$ be a right triangulated category with right semi-equivalence. Suppose that $\R$ is idempotent complete and additively finite, then $\Sigma$ is an autoequivalence. In particular,  $\R = (\R, \Sigma, \Delta)$ is a triangulated category.
\end{lem}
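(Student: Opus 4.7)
The plan is to show that the fully faithful shift $\Sigma$ is essentially surjective, so that it becomes an autoequivalence, and then to invoke the standard fact that a right triangulated category whose shift is an autoequivalence is triangulated.

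Full faithfulness of $\Sigma$ is built into the right semi-equivalence hypothesis. Being additive and fully faithful, $\Sigma$ induces ring isomorphisms $\Endo(X) \xrightarrow{\sim} \Endo(\Sigma X)$ and reflects isomorphisms. With $\R$ idempotent complete, these two features together imply that $\Sigma$ preserves and reflects indecomposability, and so descends to an injection on the set of isomorphism classes of indecomposable objects of $\R$. Additive finiteness then forces this self-injection on a finite set to be a bijection, so every indecomposable of $\R$ lies in the essential image of $\Sigma$. Using idempotent completeness and additive finiteness once more, every object of $\R$ decomposes as a finite direct sum of indecomposables, so essential surjectivity on indecomposables upgrades to essential surjectivity on all of $\R$. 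Hence $\Sigma$ is an autoequivalence.

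For the final step I would appeal to the standard fact that a right triangulated category with autoequivalence shift is triangulated: concretely, given a right triangle $A \to B \to C \to \Sigma A$, two applications of (R2) yield the right triangle $C \to \Sigma A \to \Sigma B \to \Sigma C$, and applying $\Sigma^{-1}$ --- itself a right triangle functor, as $\Sigma$ is an equivalence --- supplies the missing left rotation $\Sigma^{-1} C \to A \to B \to C$. I do not foresee a serious obstacle; the key insight is the finiteness-forces-bijection step, and the passage from right triangulated to triangulated once $\Sigma$ is invertible is essentially formal.
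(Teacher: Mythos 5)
Your proof is correct and takes essentially the same route as the paper: the paper's argument is precisely that $\Sigma$, being additive and fully faithful on an idempotent complete, additively finite category, permutes the finite set of isomorphism classes of indecomposables (so some power of $\Sigma$ is isomorphic to the identity and $\Sigma$ is an autoequivalence), and it likewise leaves the passage ``right triangulated with invertible shift $\Rightarrow$ triangulated'' as a standard fact. One small caveat on your sketch of that last step: the assertion that $\Sigma^{-1}$ is a right triangle functor is essentially the de-rotation statement being proved, so it should instead be justified in the usual way (complete $\Sigma^{-1}$ of the relevant map to a right triangle, compare via (R3), and apply the five lemma to the long exact sequences obtained from $\R(-,W)$), which is exactly the standard proof of the fact you cite.
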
 
 \begin{proof}
 Let $S=\{X_1, \dots, X_n\}$ be a set of isomorphism classes of indecomposable objects in $\R$. Since $\Sigma$ is additive, fully faithful and $\R$ is idemopotent complete. $\Sigma$ acts as a permutation on the set $S$ and we deduce that $\Sigma^m = 1_\R $ for some $m \in \mathbbm{N}$. 
 \end{proof}

\subsubsection{(Co-)stabilisation of a right triangulated category} \label{section:costab}
 
We recall the definitions of two triangulated categories associated to a right triangulated category: the stabilisation and costabilisation.
 
The \textit{stabilisation of a right triangulated category}, $\R=(\R, \Sigma, \Delta)$, consists of a pair $(\SR, s)$ where $\SR$ is a triangulated category and $s: \R \to \SR$ is a right triangle functor satisfying a universal property: For all right triangle functors $F: \R \to \T$ with $\T$ being a triangulated category, there exists a unique triangle functor $F': \SR \to \T$ such that $F's=F$
\[ \begin{tikzcd} \R \ar[r, "s"] \ar[d, "\forall F"'] & \SR \ar[ld, "\exists ! F'"] \\ \T. & \end{tikzcd}\]

Dually, the \textit{co-stabilisation of $\R$} consists of a pair $(\C(\R), c)$ where $\C(\R)$ is a triangulated category and $c:C(\R) \to \R$ is a right triangle functor satisfying a universal property:
For all right triangle functors $G:\T \to \R$ with $\T$ being a triangulated category, there exists a unique triangle functor $G':\T \to \C(\R)$ such that $G=cG'$.  

The stabilisation and co-stablisation of a right triangulated category always exist. Since we will use it explicitly, we recall the construction of the stablisation from \cite[Section 3.1]{Beligiannis2000} and \cite{Heller1968}; see also \cite{Freyd1966}. For more information on the co-stabilisation, which may be constructed as the category of spectra, we refer to \cite[Section 4.5-4.7]{Grandis1995}; see also \cite{Beligiannis2000, Jorgensen2001}.
 
 Let $\R=(\R, \Sigma, \Delta)$ be a right triangulated category.

\begin{defn}
We define the additive category $\SR$ as follows.  The objects of $\SR$ are pairs $(A, n)$ with $A \in \R$ and $n \in \mathbb{Z}$. The spaces of morphisms are given by 
\[ \SR \left[ (A, n), (B,m)\right] =  \underset{\longrightarrow k \in J}{\mathsf{colim}}{\left(\R( \Sigma^{n-k}A, \Sigma^{m-k}B)\right)}, \; J = \{k \in \mathbb{Z} \mid k \leq \mathsf{min}\{n,m\} \}.  \]
There is an autoequivalence of $\SR$ which is given on objects by $\Sigma(A,n) = (A, n+1)$ and induced on morphisms by the natural map $$ \R( \Sigma^{n-k} A, \Sigma^{m-k}B) \to \R( \Sigma^{n+1-k} A, \Sigma^{m+1-k} B)$$ for all $k \leq \mathsf{min}\{m, n\}$. By abuse of notation, we denote this autoequivalence also by $\Sigma$. 

The functor $s: \R \to \SR$ is given on objects by $s(A) = (A, 0)$ and for a morphism $f:A \to B$, $s(f:A \to B)$ is the zero-representative of $\SR((A,0), (B,0))$. 

Triangles in $\SR$ are given by sequences isomorphic to sequences of the form 
\[ \begin{tikzcd} (A,n) \ar[r, "x"] & (B,m) \ar[r, "y"] & (C, l) \ar[r, "z"] & (A, n+1) \end{tikzcd} \] such that there exists $k \leq \mathsf{min}\{n, m, l\}$ such that

 \begin{equation} \label{ST triangles}
 \begin{tikzcd}[sep=huge] \Sigma^{n-k} A \ar[r, "(-1)^k\Sigma^{-k}x"] & \Sigma^{m-k}B \ar[r, "(-1)^k\Sigma^{-k}y"] & \Sigma^{l-k} C \ar[r, "(-1)^k\Sigma^{-k}z"] & \Sigma^{n+1-k}A \end{tikzcd} 
\end{equation} is a right triangle in $\R$.  

\end{defn}

One may verify that $S\Sigma \cong \Sigma S$ and that $S$ is a right triangle functor. 

\begin{rem} We make some observations. \be \item 
If the endofunctor $\Sigma: \R \to \R$ is fully faithful, then the morphism spaces in $\SR$ are neater
\[ \SR \left[ (A, n), (B,m)\right] \cong \R(\Sigma^{n-k}A, \Sigma^{m-k} B), \; \forall k \leq \mathsf{min}\{n, m\}. \]
 Further, in this case, $s:\R \to \SR$ is fully faithful and  $\SR$ is the smallest triangulated category that contains $\R$ as a full right triangulated subcategory.
 
 \item If $\Sigma :\R \to \R$ is a right semi-equivalence then the `there exists' preceding Equation (\ref{ST triangles}) can be replaced by `for all'. 
 
 \item  Every indecomposable object in $\SR$ is isomorphic to an object of the form $(A, n)$ with $A \in (\R \setminus \Sigma \R) \cup \{0\}$.

 \ee
\end{rem}

Many categorical properties of $\SR$ are inherited from $\R$, for instance if $\R$ has (co-)products then so does $\SR$. The following properties will be useful for our work.

\begin{lem} \label{lem:bounded}
Suppose that for all $A, B \in \R$ we have that $\R(A, \Sigma^i B ) =0 $ for $i>>0$. Then, for all $X, Y \in \SR$ we have that $\SR(X, \Sigma^i Y) =0$ for all $i>>0$. 
\end{lem}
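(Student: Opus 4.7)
The plan is to fix arbitrary objects $X = (A, n)$ and $Y = (B, m)$ of $\SR$ and exhibit an explicit integer $I = I(X, Y)$ such that $\SR(X, \Sigma^i Y) = 0$ for all $i \geq I$. The argument will be essentially a one-step reduction to the hypothesis on $\R$, once the Hom-sets in $\SR$ have been collapsed using the full faithfulness of $\Sigma$.

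First, by construction of the shift on $\SR$ one has $\Sigma^i Y = (B, m+i)$, so the space in question is $\SR((A, n), (B, m+i))$. Working in the paper's standing setting of right triangulated categories with right semi-equivalence, $\Sigma : \R \to \R$ is in particular fully faithful. By the remark preceding the lemma, this forces the defining filtered colimit to stabilise: for every admissible index $k \leq \min\{n, m+i\}$ there is a canonical isomorphism
\[ \SR((A, n), (B, m+i)) \;\cong\; \R(\Sigma^{n-k}A, \Sigma^{m+i-k}B). \]
For $i \geq n - m$ the choice $k = n$ is admissible and collapses the right-hand side to $\R(A, \Sigma^{m+i-n}B)$.

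Next, the hypothesis supplies an integer $N = N(A, B) \geq 0$ with $\R(A, \Sigma^j B) = 0$ for every $j \geq N$. Setting $j = m + i - n$ and taking $i \geq N + n - m$ then gives $\SR(X, \Sigma^i Y) \cong \R(A, \Sigma^{m+i-n}B) = 0$, which is the desired conclusion with the explicit threshold $I = N(A,B) + n - m$.

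The only genuine subtlety is that the key simplification in the second paragraph relies on the full faithfulness of $\Sigma$. Without this one would have to work directly with the colimit and show that every representative $\phi \in \R(\Sigma^{n-k}A, \Sigma^{m+i-k}B)$ is eventually killed by some iterate $\Sigma^{\ell}\phi$; a uniform bound in $i$ would then require that the numbers $N(\Sigma^{\ell}A, \Sigma^{\ell}B)$ stay controlled as $\ell$ grows, which is not obvious from the bare boundedness hypothesis. In the right semi-equivalence setting this difficulty disappears because $\Sigma^{\ell}$ induces a bijection on the relevant Hom spaces, and the proof reduces to the single invocation of the hypothesis given above.
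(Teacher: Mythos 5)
Your proof is correct and takes essentially the same route as the paper: collapse $\SR(X,\Sigma^i Y)$ to a single Hom-space $\R(A,\Sigma^{j}B)$ using the full faithfulness of $\Sigma$ (the remark on the stabilisation) and invoke the boundedness hypothesis once, exactly as the paper does after first shifting $X$ and $Y$ into the image of $s:\R\to\SR$. Your closing caveat about full faithfulness is consistent with the paper, whose own proof tacitly uses it in the isomorphism $\SR(A,\Sigma^i B)\cong\R(A,\Sigma^i B)$.
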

\begin{proof}
Let $X, Y \in \SR$ then there exists $A, B \in \R$ and $k >0$ such that $\Sigma^k X \cong A$ and $\Sigma^k Y \cong B$. Therefore, for $i>>0$, we have that \[ \SR(X, \Sigma^i Y) \cong \SR(A, \Sigma^i B) \cong \R(A, \Sigma^i B) = 0.\]
\end{proof}

We call a right triangulated category satisfying the condition of the above Lemma \ref{lem:bounded} \textit{bounded.}

\begin{lem} \label{lem: T extn closed}
Let $\R=(\R, \Sigma, \Delta)$ be a right triangulated category with right semi-equivalence. Then $\R$ is extension closed as a subcategory of $\SR$. 
\end{lem}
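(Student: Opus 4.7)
A triangle in $\SR$ with end terms in the (essential) image of $s$ may be written as $(A,0)\to (B,m)\to (C,0)\to (A,1)$ for some $A,B,C\in\R$ and $m\in\mathbb{Z}$, and it suffices to show $(B,m)\cong(B',0)$ for some $B'\in\R$. When $m\ge 0$, an inspection of the colimit defining morphism spaces in $\SR$ shows $(B,m)\cong(\Sigma^m B,0)$, so only the case $m<0$ requires an argument.

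The crucial ingredient is the observation, valid under the right semi-equivalence hypothesis on $\Sigma$, that the triangle-defining condition for $\SR$ holds for every admissible $k$, not merely for some $k$. Applying this with $k=m$ produces a genuine right triangle in $\R$
\[ \Sigma^{-m}A \longrightarrow B \longrightarrow \Sigma^{-m}C \longrightarrow \Sigma^{1-m}A. \]
Because $-m\ge 1$, both outer terms lie in $\Sigma\R$, and the extension closure of $\Sigma\R$ in $\R$ (which is precisely the content of being a right semi-equivalence, together with full faithfulness) forces $B\in\Sigma\R$. Writing $B\cong\Sigma B_1$, the colimit formula again yields $(B,m)\cong(\Sigma B_1,m)\cong(B_1,m+1)$ in $\SR$.

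Replacing $(B,m)$ by the isomorphic $(B_1,m+1)$, the triangle in $\SR$ retains the same shape but with middle index incremented by one, so the previous step may be repeated. Iterating $|m|$ times brings the middle index to $0$, identifying $(B,m)$ with an object of $s(\R)$. The only nontrivial ingredient per iteration is a single invocation of extension closure of $\Sigma\R$; the rest is routine bookkeeping in the colimit definition of $\SR$, so no serious obstacle is anticipated. One could alternatively reduce to a one-shot argument by first proving, by induction on $n$ using the same ``$\Sigma$ reflects right triangles'' idea, that $\Sigma^n\R$ is extension closed in $\R$ for every $n\ge 0$, and then directly reading off $B\in\Sigma^{-m}\R$ from the displayed right triangle above.
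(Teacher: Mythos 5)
Your proof is correct and takes essentially the same route as the paper: both extract the representative right triangle in $\R$ at level $k$ equal to the (negative) middle index --- legitimate since for a right semi-equivalence the defining condition for triangles in $\SR$ holds for all admissible $k$ --- and then apply extension-closure of $\Sigma\R$ to the resulting triangle, whose outer terms lie in $\Sigma\R$. The only difference is bookkeeping: the paper normalises the middle term so that it does not lie in $\Sigma\R$ and concludes by contradiction with that minimality, whereas you iterate the step (or appeal to extension-closure of $\Sigma^{n}\R$) to raise the middle index to zero.
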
 
\begin{proof}
Let \[ \begin{tikzcd} (A,n) \ar[r, "x"] & (B,m) \ar[r, "y"] & (C, l) \ar[r, "z"] & (A, n+1) \end{tikzcd} \] be a triangle in $\SR$ with $n$, $m$, $l$, minimal such that $\Sigma^{-n} A$, $\Sigma^{-m} B$, $\Sigma^{-l}C \in \R$. We must show that if $n, l = 0$ then $m = 0$.  For that purpose, suppose that $m<0$, then $\mathsf{min}\{m,n,l\} = m$ and, by definition, there is a right triangle in $\R$ 
\[ \begin{tikzcd}[sep=huge] \Sigma^{n-m} A \ar[r, "(-1)^m\Sigma^{-m}x"] &B \ar[r, "(-1)^m\Sigma^{-m}y"] & \Sigma^{l-m} C \ar[r, "(-1)^m\Sigma^{-m}z"] & \Sigma^{n+1-m}A. \end{tikzcd} \] As $n-m, l-m>0$ and $\Sigma: \R \to \R$ is a right semi-equivalence, we deduce that $B \in \Sigma \R$ which is a contradiction to the minimality of $m$. 
\end{proof}

\begin{exmp}
Let $\mathcal{T}$ be a triangulated category and $\R \subset \T$ a subcategory that is closed under positive shifts and extensions. Then $\R$ is a right triangulated category with right semi-equivalence and $\SR \cong \mathsf{cosusp}_{\T}(\R)$, the smallest subcategory of $\T$ containing $\R$ that is closed under negative shifts and extensions.
\end{exmp}

For more examples of the stabilisation of right triangulated categories, see \cite[Section 3.1]{Beligiannis2000} and \cite{Keller1987}.

\section{Right triangulated categories as extriangulated categories} \label{sec:rightriextri}

The aim of this section is show that right triangulated categories have a natural extriangulated structure precisely when the shift functor is a right semi-equivalence. We also describe which extriangulated categories have a right triangulated structure. To do this, we begin by using relative homological algebra to define new extriangulated structures from existing ones (Section \ref{S3.1}). We then show when these new extriangulations induce right triangulated structures on a quotient category (Section \ref{S3.2}). With these tools in hand we complete the above aims in Section \ref{S3.3}. 

\subsection{Extriangulated structures using relative homological algebra} \label{S3.1}
 
 In this section we show how one can use relative homological algebra to construct new extriangulated structures and characterise when these exact structures have enough injectives/ projectives. The existence of these extriangulated structures also follows from \cite[Proposition 3.17]{herschend2017n} for $n=1$ but we give an alternative proof using relative notions. We begin with an easy lemma.

\begin{lem}
Let $(\mathcal{C}, \EE, \fs)$ be an extriangulated category, $\mathcal{D}$ be a class of objects in $\mathcal{C}$ and $\delta$ be an extriangle. Then the following are equivalent
\be
\item There exists representative of $\fs(\delta)$, $A\overset{x}{\longrightarrow}B\overset{y}{\longrightarrow}C$, such that $x$ is $\D$-monic;
\item The $\EE$-inflation of every representative of $\fs(\delta)$ is $\D$-monic.   
\ee
\end{lem}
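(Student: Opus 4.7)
The statement is essentially the observation that the property of an $\EE$-inflation being $\D$-monic is independent of the chosen representative of the equivalence class $\fs(\delta) \in \S(C,A)$. My plan is to prove the equivalence by handling the two implications directly from the definitions.

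The implication (b) $\Rightarrow$ (a) is immediate: since the equivalence class $\fs(\delta)$ has at least one representative (and in fact a representative exists precisely because $\fs$ is a realisation), any such representative witnesses (a).

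The substantive direction is (a) $\Rightarrow$ (b). Suppose $A \xrightarrow{x} B \xrightarrow{y} C$ is a representative with $x \in \Mon(\D)$, and let $A \xrightarrow{x'} B' \xrightarrow{y'} C$ be any other representative of $\fs(\delta)$. By the definition of the equivalence relation on $\S(C,A)$, there exists an isomorphism $b : B \to B'$ such that $bx = x'$ and $y'b = y$. Given any morphism $g : A \to D$ with $D \in \D$, the $\D$-monic property of $x$ produces $h : B \to D$ with $hx = g$. Setting $h' := h b^{-1} : B' \to D$, we compute $h' x' = hb^{-1} x' = hb^{-1}(bx) = hx = g$, so $x'$ factors $g$ as required. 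Hence $x' \in \Mon(\D)$, proving (b).

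The only thing to be careful about is invoking the correct definition of equivalence of pairs (the one introduced earlier in the excerpt), but otherwise the argument is a short diagram chase. No obstacle beyond bookkeeping is expected; the lemma is essentially a well-definedness statement for the notion of a $\D$-monic extriangle, justifying the terminology used in later constructions where one speaks of inflations being $\D$-monic without reference to a specific representative.
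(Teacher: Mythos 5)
Your proof is correct and is essentially the paper's argument: the paper dismisses (b)$\Rightarrow$(a) as obvious and deduces (a)$\Rightarrow$(b) from the closure properties of $\Mon(\D)$ in Lemma \ref{biglez} (closure under weak pushouts, applied to the square determined by the isomorphism $b$), which is precisely what your explicit factorisation $h' = hb^{-1}$ verifies by hand. There is no gap.
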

\begin{proof}
(a)$\Rightarrow$(b) follows from Lemma \ref{biglez}(c). (b)$\Rightarrow$(a) is obvious. 
\end{proof}

\begin{prop} \label{prop:new exact str}
Let $(\mathcal{C}, \EE, \fs)$ be an extriangulated category and $\mathcal{D}$ a class of objects in $\mathcal{C}$. Consider the classes of extriangles for $A, C \in \C$
\begin{align*}
	\II_\mathcal{D}(C,A) &= \{ \begin{tikzcd}[ampersand replacement = \&, sep =small] A \arrow[r, "x"] \& B \arrow[r, "y"] \& C \arrow[r, "\delta", dashed] \& {} \end{tikzcd} \in \EE(C, A)  \mid x \in \mathsf{Mon}(\mathcal{D}) \}, \\
	 \PPP_\mathcal{D}(C,A) &= \{ \begin{tikzcd}[ampersand replacement = \&, sep =small] A \arrow[r, "x"] \& B \arrow[r, "y"] \& C \arrow[r, "\delta", dashed] \& {} \end{tikzcd} \in \EE(C, A) \mid y \in \mathsf{Epi}(\mathcal{D}) \}, \text { and} \\ 
	 	\DD_\mathcal{D}(C,A) &= \{ \begin{tikzcd}[ampersand replacement = \&, sep =small] A \arrow[r, "x"] \& B \arrow[r, "y"] \& C \arrow[r, "\delta", dashed] \& {} \end{tikzcd} \in \EE(C, A) \mid x \in \mathsf{Mon}(\mathcal{D}), y \in \mathsf{Epi}(\mathcal{D}) \} = \II_\mathcal{D} \cap \PPP_\mathcal{D}.
\end{align*} 
Then $(\II_\D, \fs|_{\II_\D}), (\PPP_\D, \fs|_{\PPP_\D})$ and $(\DD_{\D}, \fs|_{\DD_\D})$ all define external triangulations of $\mathcal{C}$. Moreover, 
\begin{align*}
\mathcal{D} &\subseteq \mathsf{Inj}_{\II_\mathcal{D}} \C \subseteq \mathsf{Inj}_{\DD_\mathcal{D}}\C  \text { and} \\ \mathcal{D} &\subseteq \mathsf{Proj}_{\PPP_\mathcal{D}} \C  \subseteq \mathsf{Proj}_{\DD_\mathcal{D}}\C. 
\end{align*}
Additionally, suppose that $\mathcal{D}= \mathsf{Add}(\mathcal{D})$ then the following hold. 
\be \item If $\mathcal{D}$ is covariantly finite and left $\mathcal{D}$-approximations are $\EE$-inflations  then $\mathcal{D} =  \mathsf{Inj}_{\II_\mathcal{D}} \C$ and  $\C$ has enough $\II_\mathcal{D}$-injectives.
\item If $\mathcal{D}$ is contravariantly finite and right $\mathcal{D}$-approximations are $\EE$-deflations then $\mathcal{D} = \mathsf{Proj}_{\PPP_\mathcal{D}} \C$ and $\C$ has enough $\PPP_\mathcal{D}$-projectives.
\item If $\mathcal{D}$ is functorially finite, left (resp. right) $\mathcal{D}$-approximations are $\EE$-inflations (resp. $\EE$-deflations) and cones (resp. co-cones)  of left (resp. right) $\mathcal{D}$-approximations are $\mathcal{D}$-epic (resp. $\mathcal{D}$-monic) then $\mathcal{D} = \mathsf{Inj}_{\DD_\mathcal{D}}\C=\mathsf{Proj}_{\DD_\mathcal{D}}\C$ and $\C$ has enough $\DD_\mathcal{D}$-injectives and $\DD_\mathcal{D}$-projectives. 
\ee
\end{prop}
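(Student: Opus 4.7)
The plan is to proceed in three stages: (1) verify that each of $\II_\D$, $\PPP_\D$, $\DD_\D$ is an additive sub-bifunctor of $\EE$; (2) check that the extriangulated axioms restrict to each such subfunctor to give external triangulations of $\C$; and (3) treat the injective/projective statements, bringing in the approximation hypotheses only for parts (a)--(c).

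For (1), additivity of $\II_\D(C,A)$ as a subgroup of $\EE(C,A)$ reduces to closure under direct sums together with closure under the pushforward $a_\ast$ and pullback $c^\ast$ actions, since the Baer sum is built from these. Direct sums of $\D$-monics are $\D$-monic componentwise. For closure under $a_\ast$ on $\II_\D$: by Corollary~\ref{cor:weakpushout}, any realisation of $a_\ast \delta$ fits into a weak pushout square whose source inflation is that of $\delta$, so Lemma~\ref{biglez}(c) forces the new inflation to be $\D$-monic. For closure under $c^\ast$: a realisation of $c^\ast \delta$ comes with a morphism $b : B' \to B$ satisfying $b x' = x$, so any $f : A \to D$ with $D \in \D$ that factors as $gx$ also factors as $(gb)x'$. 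The $\PPP_\D$ case is dual (using the weak pullback side of Corollary~\ref{cor:weakpushout}) and $\DD_\D$ inherits both.

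For (2), (ET1), (ET2), and (ET3) (together with its dual) are essentially inherited from $\EE$, since the latter imposes no additional constraint on the filler. The delicate axiom is (ET4), which I expect to be the main obstacle. In the notation of Definition~\ref{defn:extricat}, assume $\delta, \delta' \in \II_\D$ so that $f$ and $g$ are $\D$-monic. Then the composite extriangle has inflation $gf$, which is $\D$-monic by Lemma~\ref{biglez}(a). For the third extriangle $D \xrightarrow{h} E \xrightarrow{h'} F$, the left square agrees with the weak pushout (via Corollary~\ref{cor:weakpushout}) of $\delta'$ along $f' : B \to D$; hence $h$ is $\D$-monic by another use of Lemma~\ref{biglez}(c). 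The $\PPP_\D$ case is dual, and $\DD_\D$ follows by combining both. (ET4)\textsuperscript{op} is symmetric. This recovers, by explicit means, the special case of \cite[Proposition~3.17]{herschend2017n}.

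For (3), the inclusions follow immediately: any $J \in \D$ has every $\II_\D$-extriangle starting at $J$ split (since $1_J$ factors through the $\D$-monic inflation $J \to B$), so $\II_\D(-,J) = 0$; and $\DD_\D \subseteq \II_\D$ yields the second inclusion. For part (a), a left $\D$-approximation $\phi : X \to D$ is automatically $\D$-monic and, by hypothesis, an $\EE$-inflation, thus an $\II_\D$-inflation; completing it to an extriangle shows $\C$ has enough $\II_\D$-injectives in $\D$. Conversely, any $\II_\D$-injective $I$ splits its own left approximation $I \to D$, giving $I \in \mathsf{Add}(\D) = \D$. Parts (b) and (c) follow by dualisation and by combining both sides respectively.
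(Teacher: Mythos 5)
Your stages (1) and (3) are essentially the paper's computations: closure of $\II_\D$ under $a_\ast$ via Corollary \ref{cor:weakpushout} and Lemma \ref{biglez}(c), closure under $c^\ast$ via left divisivity (Lemma \ref{biglez}(b)), composition of $\II_\D$-inflations via Lemma \ref{biglez}(a), and the same splitting/approximation arguments for the statements about $\D$ and the injectives. The structural difference is that the paper does not verify the axioms by hand: it feeds exactly these three closure properties into the closed-subfunctor criterion \cite[Proposition 3.14]{herschend2017n}, which then yields the external triangulation. That criterion is precisely what covers the step your direct verification glosses over.

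The gap is in stage (2), at ``(ET4)\textsuperscript{op} is symmetric'' (and, equally, at ``the $\PPP_\D$ case is dual'' for (ET4)). Since $\II_\D$ is cut out by a condition on inflations only, (ET4)\textsuperscript{op} for $\II_\D$ is \emph{not} the formal dual of your (ET4) argument. Concretely, starting from two $\II_\D$-extriangles $D \overset{f'}{\to} B \overset{f}{\to} A$ and $F \overset{g'}{\to} C \overset{g}{\to} B$ (so $f',g' \in \Mon(\D)$), the (ET4)\textsuperscript{op} diagram must contain an $\II_\D$-extriangle $E \overset{e}{\to} C \overset{fg}{\to} A$ realising some $\delta''$ with $h_\ast\delta''=\delta$; you need $e \in \Mon(\D)$, and neither Lemma \ref{biglez}(a) nor the weak-pushout argument applies here (the pushout property of the square $f'h = ge$ transfers $\D$-monicity in the wrong direction, and $h_\ast\delta'' \in \II_\D$ does not force $\delta'' \in \II_\D$). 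This is exactly the non-trivial content of the closed-subfunctor theorem, namely that closure of inflations under composition already implies the corresponding statement on the deflation side. You can repair it either by citing \cite[Proposition 3.14]{herschend2017n} as the paper does, or directly: given $\phi : E \to D_0$ with $D_0 \in \D$, use $eh' = g'$ and $\D$-monicity of $g'$ to find $\psi : C \to D_0$ with $\psi g' = \phi h'$; then $(\phi - \psi e)h' = 0$, so by the exact sequence of Lemma \ref{lem:extriles} applied to the extriangle $F \overset{h'}{\to} E \overset{h}{\to} D$ we get $\phi - \psi e = \theta h$; finally $\D$-monicity of $f'$ gives $\theta = \rho f'$, and $f'h = ge$ yields $\phi = (\psi + \rho g)e$, so $e \in \Mon(\D)$. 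With this (and its dual for (ET4) in the $\PPP_\D$ case) inserted, your verification is complete and matches the proposition.
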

\begin{proof}
We prove the statements for $\II_\mathcal{D}$ whence the remaining claims follow from dual and combined arguments. Let $\delta \in \II_\D(C', A)$, $a \in \C( A, A')$ and $c \in \C(C, C')$. By \cite[Proposition 3.14]{herschend2017n} it is enough to show that $a_{\ast} \delta \in \II_\D(C', A'), c^{\ast}\delta \in \II_\D(C, A)$ and $\II_\D$-inflations are closed under composition. The first follows from Corollary  \ref{cor:weakpushout} and Lemma \ref{biglez}(c) and the third from Lemma \ref{biglez}(a). To show the second claim consider the morphism of extriangles $(1, c): c^{\ast}\delta \to \delta$ 
\[
\begin{tikzcd}[ampersand replacement = \&] A \arrow[r, "x"] \arrow[d, equal] \& B \arrow[r, "y"] \arrow[d, "b"] \& C \arrow[r, dashed, "c^\ast \delta"] \arrow[d, "c"] \& {} \\ A \arrow[r, "x'"] \& B' \arrow[r, "y'"] \& C' \arrow[r, dashed, "a_\ast \delta"] \& {.} 
\end{tikzcd}
\]
We see that $x' = bx \in \Mon (\D)$ and hence $x$ is $\D$-monic by Lemma \ref{biglez}(b). 

We verify the additional claims. By construction we have that the class of $\D$-monic morphisms is contained in the class of $\II_\D$-inflations, $\omega$. Thus 
$$ \D \subseteq \mathsf{Inj}(\mathsf{Mon}(\D)) \subseteq \mathsf{Inj}(\omega) = \mathsf{Inj}_{\II_\D}\C.
$$
Now additionally suppose that $\D = \mathsf{Add}(\D)$, $\mathcal{D}$ is covariantly finite and left $\mathcal{D}$-approximations are $\EE$-inflations. Then for every object $I \in \mathsf{Inj}_{\II_\D}\C$ there is an $\II_\D$-inflation $i:I \to D$ with $D \in \D$. We deduce that $I$ is a direct summand of $\D$ and we are done. 
\end{proof}

\begin{notation} Let $(\mathcal{C}, \EE, \fs)$ be an extriangulated category.
For brevity, in the sequel we will say a subcategory $\D$ of $\C$ has property $(\ast )$ if $\mathcal{D}= \mathsf{Add}(\mathcal{D})$, $\D$ is covariantly finite and all left $\D$-approximations are $\EE$-inflations.
\end{notation}

We note that one may view the Frobenius property \cite{Heller1960, happel1988triangulated} of external triangulations through the lens of relative homological algebra.

\begin{cor} \label{cor:frobextri}
An extriangulated category $(\mathcal{C}, \EE, \fs)$ is Frobenius if and only if there exists a functorially finite subcategory $\PP$ such that 
\be \item All left $\PP$-approximations are $\EE$-inflations and all $\EE$-inflations are $\PP$-monic;
\item  All right $\PP$-approximations are $\EE$-deflations and all $\EE$-deflations are $\PP$-epic.
\ee In other words, $\PP = \mathsf{Proj}_{\DD_\PP}\C = \mathsf{Inj}_{\DD_\PP}\C$ and $\DD_\PP = \EE$. 
\end{cor}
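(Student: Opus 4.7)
My plan is to prove the two directions of the biconditional separately, with the final ``in other words'' part following as a bookkeeping observation. The reverse direction is essentially the reformulation of the approximation conditions, while the forward direction requires a more delicate argument that every left $\PP$-approximation is an $\EE$-inflation.

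For the reverse direction, suppose there is a functorially finite $\PP$ satisfying (a) and (b). The clause ``all $\EE$-inflations are $\PP$-monic'' of (a) says precisely $\PP \subseteq \mathsf{Inj}_{\EE}\C$, and conversely, given $J \in \mathsf{Inj}_{\EE}\C$, a left $\PP$-approximation $j: J \to P$ is an $\EE$-inflation by the other clause of (a), so $1_J$ factors through $j$, exhibiting $J$ as a summand of $P$; hence $J \in \PP$ since $\PP = \mathsf{Add}\,\PP$. Enough $\EE$-injectives follows immediately from covariant finiteness of $\PP$ combined with ``left $\PP$-approximations are $\EE$-inflations''. The dual arguments using (b) yield $\mathsf{Proj}_{\EE}\C = \PP$ and enough $\EE$-projectives, so $\C$ is Frobenius with $\mathsf{Proj}_{\EE}\C = \mathsf{Inj}_{\EE}\C = \PP$.

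For the forward direction, assume $\C$ is Frobenius and set $\PP := \mathsf{Proj}_{\EE}\C = \mathsf{Inj}_{\EE}\C$. Functorial finiteness is automatic: an $\EE$-inflation $A \to I$ to $I \in \PP$ furnished by enough $\EE$-injectives is a left $\PP$-approximation (any morphism $A \to P'$ with $P' \in \PP$ factors through it by $\EE$-injectivity of $P'$), and dually. The $\PP$-monic/$\PP$-epic clauses of (a) and (b) are just the definitions of $\PP = \mathsf{Inj}_{\EE}\C = \mathsf{Proj}_{\EE}\C$. For the clause ``all left $\PP$-approximations are $\EE$-inflations'', let $f: A \to P$ be an arbitrary left $\PP$-approximation and pick an $\EE$-inflation $j: A \to I$ with $I \in \PP$. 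The approximation property provides $g: P \to I$ with $j = gf$, while $\EE$-injectivity of $P$ provides $h: I \to P$ with $f = hj$. A direct computation gives $(g, 1_P)^T \circ f = (j, f)^T$ as morphisms $A \to I \oplus P$. By Lemma \ref{lem:LN1.20} applied to the morphism $f$ and the extriangle of $j$, the morphism $(-f, j)^T: A \to P \oplus I$ is an $\EE$-inflation; composing with the evident isomorphism that swaps summands and negates the appropriate component shows $(j, f)^T: A \to I \oplus P$ is also an $\EE$-inflation. Since $(g, 1_P)^T: P \to I \oplus P$ is a section (with retraction the second projection) and therefore a monomorphism, the idempotent-complete version of the ``obscure axiom'' (left divisibility of $\EE$-inflations along monomorphisms, available since the paper works with idempotent complete categories) forces $f$ itself to be an $\EE$-inflation. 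The clause on right approximations in (b) is dual.

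Finally, for the ``in other words'' reformulation, the inclusion $\DD_\PP \subseteq \EE$ is built into the definition of $\DD_\PP$, while conditions (a) and (b) together force $\EE \subseteq \DD_\PP$, giving $\DD_\PP = \EE$. The identifications $\PP = \mathsf{Proj}_{\DD_\PP}\C = \mathsf{Inj}_{\DD_\PP}\C$ then follow from the main equivalence together with this equality. The main obstacle is the clause ``left $\PP$-approximations are $\EE$-inflations'' in the forward direction, where one must prove that \emph{every} such approximation (not merely the canonical $\EE$-inflation produced by enough injectives) is an $\EE$-inflation; the route I propose leverages the double factorization coming from the approximation property and $\EE$-injectivity of $P$ to express $f$ as the left factor of a composition whose right factor is a section and whose overall composition is known to be an $\EE$-inflation via Lemma \ref{lem:LN1.20}.
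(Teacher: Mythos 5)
Your proposal is correct in outline, and it supplies more than the paper does: the corollary is stated without proof, the intended justification being Proposition \ref{prop:new exact str}. For the direction (a),(b) $\Rightarrow$ Frobenius your argument is essentially that route: the clauses ``all $\EE$-inflations are $\PP$-monic'' and ``all $\EE$-deflations are $\PP$-epic'' say exactly that $\DD_\PP=\EE$, and then part (c) of Proposition \ref{prop:new exact str} (or your direct summand argument, which is the same computation) gives $\PP=\mathsf{Inj}_{\EE}\C=\mathsf{Proj}_{\EE}\C$ with enough injectives and projectives. The forward direction is where the real content sits, and you identify the crux correctly: one must show that an \emph{arbitrary} left $\PP$-approximation $f\colon A\to P$ is an $\EE$-inflation, not just the canonical one coming from enough injectives. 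Your reduction via Lemma \ref{lem:LN1.20} to the cancellation statement ``if $sf$ is an $\EE$-inflation and $s$ is a section, then $f$ is an $\EE$-inflation'' is a sensible way to do this.

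The one point to repair is the justification of that cancellation. It is not a formal left-divisibility property on the model of Lemma \ref{biglez}(b) for $\D$-monics; it is precisely Nakaoka--Palu's condition (WIC), i.e.\ the extriangulated form of the obscure axiom, and it genuinely requires (weak) idempotent completeness together with a nontrivial argument. It does hold under the paper's standing assumption that categories are idempotent complete, but the paper neither proves nor cites it, so as written your key step is an appeal to an external fact phrased as if it were automatic. Either cite the known result that weakly idempotent complete extriangulated categories satisfy (WIC) (Msapato's work on the Karoubi envelope of extriangulated categories; see also Klapproth), or include the short argument in your special case: after composing $\bsm j \\ f \esm$ with an automorphism of $I\oplus P$ you get an extriangle starting with $\bsm 0 \\ f \esm\colon A\to I\oplus P$; the projection $I\oplus P\to I$ kills this inflation, hence factors through the deflation of that extriangle, exhibiting $I$ as a direct summand of the cone; splitting it off (idempotent completeness) and comparing realisations via the additivity axiom (S1)/(S2) leaves an extriangle of the form $A\xrightarrow{f}P\to D'$, so $f$ is an $\EE$-inflation. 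With that step made precise (and its dual for clause (b)), your proof is complete, and it has the merit of actually establishing the forward implication, which the paper leaves entirely implicit.
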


\subsection{Right triangulated stable categories} \label{S3.2}

We show how extriangulations can induce right triangulated structures on a quotient category. The construction is reminiscent of the triangulated structure of the stable category of a Frobenius exact category \cite{Heller1960, happel1988triangulated}. 

\textbf{Set-up:} For the rest of this section, let $(\mathcal{C}, \EE, \fs)$ be an extriangulated category and $\D \subseteq \C$ be a subcategory satisfying property $(\ast)$. For each $A \in \C$ we make a choice of extriangle 
\begin{equation}
\begin{tikzcd} A \ar[r, "i_A"] & \D (A) \ar[r, "p_A"] & \Sigma_\D A \ar[r, "\delta_A", dashed] &{} \end{tikzcd} 
\end{equation} where $i_A$ is a left $\D$-approximation of $A$. We also define $\Sigma_D$ on morphisms: Let $f: A \to B$. Then, as $i_A$ is $\D$-monic, there exists $\D(f): \D(A) \to \D(B)$ such that $\D(f)i_A = i_B f$ and thus, by (ET3) there exists a morphism $\SD f: \SD A \to \SD B$ such that $(f, \SD): \delta_A \to \delta_B$ is a morphism of extriangles. 
 \begin{equation}
\begin{tikzcd} A \arrow[r, "i_A"] \arrow[d, "f"] & \D(A) \arrow[d, dashed, " \exists"', "\D(f)"] \arrow[r, "p_A"] & \SD A \arrow[d, dashed, "\exists"',  "\SD f"] \ar[r, "\delta_A", dashed] &{} \\ B \arrow[r, "i_B"'] & \D(B)\arrow[r, "p_B"'] & \Sigma_\D B \ar[r, "\delta_B", dashed] & {.}  \end{tikzcd} 
 \end{equation}
By $\underline{\C}_\D$ we denote the stable category of $\C$ by the ideal of morphisms factoring through objects of $\D$. 

The following lemma is similar to \cite[Claim 6.1]{nakaoka2019extriangulated}.

\begin{lem} \label{lem:shiftSD}
The construction above of $\SD$ defines an additive endofunctor $\CD \to \CD$. Further, any other choice of extriangles $\delta_A$ yields a naturally isomorphic endofunctor.
\end{lem}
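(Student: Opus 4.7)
The plan is to establish a single uniqueness principle from which everything else follows: given a morphism of extriangles $(f, c): \delta_A \to \delta_B$ with prescribed left-hand component $f$, the right-hand component $c$ is unique modulo morphisms factoring through $\D$. The main tools will be Lemma~\ref{NP3.5} and the long exact sequence of Lemma~\ref{lem:extriles}; the main obstacle is purely organisational, namely observing that each of functoriality, additivity, well-definedness on $\CD$, and independence of the chosen resolutions reduces to this one principle.

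First I would prove the uniqueness statement. Suppose $\sigma, \sigma': \SD A \to \SD B$ both give morphisms of extriangles $(f, \sigma), (f, \sigma'): \delta_A \to \delta_B$ (possibly with different realising middle components). By definition $\sigma^\ast \delta_B = f_\ast \delta_A = (\sigma')^\ast \delta_B$, so $(\sigma - \sigma')^\ast \delta_B = 0$. Applying $\C(\SD A, -)$ to $\delta_B$ and using the long exact sequence of Lemma~\ref{lem:extriles}, $\sigma - \sigma'$ lies in the image of $(p_B)_\ast$ and hence factors through $\D(B) \in \D$, proving the claim.

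Next I would show that $\SD$ kills morphisms factoring through $\D$. If $f: A \to B$ factors through some $D \in \D$, then the $\D$-monic property of $i_A$ forces $f$ itself to factor through $i_A$. Lemma~\ref{NP3.5} applied to the realization $(f, \D(f), \SD f)$ of $(f, \SD f): \delta_A \to \delta_B$ then yields that $\SD f$ factors through $p_B$, and hence through $\D(B) \in \D$. Combining this with the uniqueness step gives a well-defined map on the Hom-sets of $\CD$. Functoriality and additivity are immediate because $\SD g \circ \SD f$ (respectively $\SD f + \SD g$) is itself a valid right-hand component for $(gf, -): \delta_A \to \delta_C$ (respectively for $(f+g, -): \delta_A \to \delta_B$), so the uniqueness principle identifies it with $\SD(gf)$ (respectively $\SD(f+g)$) modulo $\D$. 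The trivial choice $\D(1_A) = 1_{\D(A)}$ gives $\SD 1_A = 1_{\SD A}$.

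For the naturality-of-choice statement, given a second system of extriangles $\delta_A'$ yielding $\SD'$, the $\D$-monic property of $i_A'$ produces $\psi_A: \D(A) \to \D(A)'$ with $\psi_A i_A = i_A'$, and (ET3) then supplies $\eta_A: \SD A \to \SD' A$ making $(1_A, \eta_A): \delta_A \to \delta_A'$ a morphism of extriangles. Swapping the roles of the two systems gives a candidate inverse $\eta_A'$; both compositions $\eta_A' \eta_A$ and $\eta_A \eta_A'$ arise as right-hand components of morphisms of extriangles whose left-hand component is $1_A$, so the uniqueness principle identifies them with the identities in $\CD$. Naturality in $\CD$ is obtained analogously by comparing the two right-hand components $\eta_B \circ \SD f$ and $\SD' f \circ \eta_A$ of the morphism of extriangles $(f, -): \delta_A \to \delta_B'$, which must agree modulo $\D$ by the same principle.
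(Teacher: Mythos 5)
Your proposal is correct and follows essentially the same route as the paper: the central step — that any two right-hand components over the same $f$ satisfy $\sigma^\ast\delta_B = f_\ast\delta_A = (\sigma')^\ast\delta_B$ and hence differ by a morphism factoring through $p_B$ — is exactly the paper's argument (you invoke the long exact sequence of Lemma \ref{lem:extriles} where the paper cites Lemma \ref{NP3.5}, an interchangeable choice), and the comparison of two choices of $\delta_A$ and the naturality check are handled identically. If anything, your write-up is slightly more complete, since you also verify explicitly that $\SD$ annihilates morphisms factoring through $\D$, a point the paper leaves implicit.
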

\begin{proof} We keep the notation of the above paragraph. 
First we show that $\SD$ is well-defined on morphisms in $\CD$, that is, the construction of $\SD f$ is independent of the choices made of $\D(f)$ and in (ET3). Indeed, let $g: \SD A \to \SD B$ be any morphism such that $(f,g): \delta_A \to \delta_B$ is a morphism of extriangles.  Then $g^{\ast}\delta_B = f_{\ast}\delta_A = (\SD f)^\ast \delta_B$ and therefore $(g-\SD f)^\ast \delta_B =0$. By Lemma \ref{NP3.5}, $g-\SD f$ factors through $p_B$ and hence $\und{g-\SD f} =0$ in $\CD$.

 Now suppose for all $A \in \C$ we have another choice of extriangle 
\[ \begin{tikzcd} A \ar[r, "i'_A"] & \D' (A) \ar[r, "p'_A"] & \Sigma_\D' A \ar[r, "\delta'_A", dashed] &{} \end{tikzcd} \] which then results in another endofunctor $\SD' : \CD \to \CD$. Then, since $i_A$ is $\D$-monic, there exists $s:\D(A) \to \D'(A)$ such that $si_A = i'_A$. Then, by (ET3) there exists $t_A =t:\SD A \to \SD' A$ such that $(1, t): \delta_A \to \delta'_A$ is a morphism of extriangles. Similarly, we obtain a morphism $t'_A=t': \SD' A \to \SD A $ such that $(1, t'): \delta'_A \to \delta_A$ is a morphism of extriangles 
\[ 
\begin{tikzcd} A \arrow[r, "i_A"] \arrow[d, equal] & \D(A) \arrow[d, dashed, " \exists"', "s"] \arrow[r, "p_A"] & \SD A \arrow[d, dashed, "\exists"',  "t"] \ar[r, "\delta_A", dashed] &{} \\ A \arrow[r, "i'_A"] \ar[d, equal] & \D'(A)\arrow[r, "p'_A"] \arrow[d, dashed, " \exists"'] & \Sigma_\D' A \ar[r, "\delta'_A", dashed] \ar[d, dashed, "\exists"', "t'"] & {} \\ A \arrow[r, "i_A"]  & \D(A) \arrow[r, "p_A"] & \SD A  \ar[r, "\delta_A", dashed] &{.}  \end{tikzcd} 
\]
We claim that $\{t_A\}_{A \in \CD}$ is an isomorphism of functors $\SD \to \SD'$ with inverse given by $\{t'_A\}_{A \in \CD}$. The fact that the $t_A$ are isomorphisms in $\CD$ follows from the observation that $(1, t't): \delta_A \to \delta_A$ is a morphism of extriangles and hence, by using a similar argument to the above, we see that $\und{t't} = \und{1_{\SD A}}$ in $\CD$. Dually, $\und{tt'} = \und{1_{\SD'A}}$. 

It remains to verify that $\{t_A\}_{A \in \CD}$ is a natural transformation. Let $f:A \to B$ we must show that $\und{t_B \SD f} = \und{\SD'f t_A}$. This follows from the observation that the pairs $(f, ( t_B \SD f))$ and $(f, (\SD'f t_A))$ both define morphisms of extriangles $\delta_A \to \delta'_B$. 
\end{proof}

\begin{prop} \label{thm:abm} 
The stable category $\underline{\mathcal{C}}_\mathcal{D}$ with the endofunctor $\Sigma_\D$ admits a right triangulation given by the collection of all sequences isomorphic to sequences of the form 
 \[ \begin{tikzcd} A \arrow[r, "\underline{f}"] & B \arrow[r, "\underline{g}"]& C \arrow[r, "\underline{h}"] & \Sigma x \end{tikzcd} \]
that fit into a commutative diagram in $\mathcal{C}$
 \[ \begin{tikzcd} A \arrow[r, "i_A"] \arrow[d, "f"'] & \D(A) \arrow[r, "p_A"] \arrow[d] & \Sigma_\D A \arrow[d, equal] \arrow[r, dashed, "\delta_A"] & {} \\ B \arrow[r, "g"] & C \arrow[r, "h"] & \SD A \ar[r, "f_\ast \delta_x", dashed] & .\end{tikzcd} \] 
Furthermore, $\SD \underline{\C}_\D$ is always closed under extensions and  $\SD$ is fully faithful if and only if $\mathcal{D} \subset \mathsf{Proj}_{\II_\mathcal{D}} \C$.

\end{prop}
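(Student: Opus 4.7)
The plan is to split the proof into three parts and use the approximation extriangle $\delta_A\colon A \xrightarrow{i_A} \D(A) \xrightarrow{p_A} \Sigma_\D A$ as the translation mechanism between $\C$ and $\und{\C}_\D$, together with axioms (ET3), (ET4) and Lemmas \ref{lem:LN1.20} and \ref{NP3.5}.

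I would first verify the right triangulation axioms. (R1)(i) is immediate from defining the class up to isomorphism in $\und{\C}_\D$; (R1)(ii) follows from the degenerate case $f = 0\colon 0 \to A$, where $f_*\delta_0 = 0$ yields the split extriangle $A \xrightarrow{1} A \to 0$; and (R1)(iii) follows by picking any lift $f$ of $\und{f}$ and applying (ET3) to the canonical morphism of $\EE$-extensions $(f, 1_{\Sigma_\D A})\colon \delta_A \to f_*\delta_A$ to produce the required commutative diagram. For rotation (R2), I would apply (ET4) to the underlying extriangle $f_*\delta_A$ and the pushout extriangle $g_*\delta_B$ (where $g$ is the second arrow of the right triangle), and compare with the approximation extriangles to identify the cone of $g_*\delta_B$ with $\Sigma_\D A$ in $\und{\C}_\D$. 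Axiom (R3) is (ET3) applied to the morphism of underlying extriangles induced by the given square, and (R4) follows from (ET4) applied to the three underlying extriangles of the hypothesised right triangles.

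For extension closure of $\Sigma_\D \und{\C}_\D$, I would take a right triangle $A \to B \to C \to \Sigma_\D A$ with representatives $A = \Sigma_\D U$ and $C = \Sigma_\D V$ chosen in $\C$, and consider the underlying extriangle $B \to \Sigma_\D V \to \Sigma_\D^2 U$. Applying (ET4) (or its pullback dual) to combine this with the approximation extriangles of $\Sigma_\D U$ and $\Sigma_\D V$ produces a $3 \times 3$ diagram in which, after passing to $\und{\C}_\D$ where objects of $\D$ become zero, $B$ is exhibited as the cone of a left $\D$-approximation, hence as $\Sigma_\D W$ for some $W$.

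For the fully-faithfulness characterisation, the pivotal reformulation is that $\D \subseteq \mathsf{Proj}_{\II_\D}\C$ holds if and only if every $h\colon D \to \Sigma_\D B$ with $D \in \D$ factors through $p_B$ (by Lemma \ref{lem:extriles} applied to $\delta_B \in \II_\D$ together with the closure of $\II_\D$ under pullback). For $(\Leftarrow)$: faithfulness holds because if $\Sigma_\D f = \beta\alpha$ factors through some $D \in \D$, then $\beta$ lifts through $p_B$, so $\Sigma_\D f$ factors through $p_B$ and Lemma \ref{NP3.5} gives $f$ factoring through $i_A$, whence $\und{f}=0$; for fullness, given $g\colon \Sigma_\D A \to \Sigma_\D B$, the composite $gp_A$ lifts through $p_B$ to some $h\colon \D(A) \to \D(B)$, and $p_B h i_A = 0$ produces $f\colon A \to B$ with $i_B f = h i_A$, so that $(f, h, g)$ realises a morphism of extriangles and Lemma \ref{lem:shiftSD} gives $\und{g} = \und{\Sigma_\D f}$. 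For $(\Rightarrow)$: using the long exact sequence identification $\und{\C}_\D(A, B) \cong \ker(p_A^*\colon \EE(\Sigma_\D A, B) \to \EE(\D(A), B))$ and its analogue after shifting, fully-faithfulness translates into the required lifting property. The hard part I expect is Part 2, where the $3 \times 3$ diagram's intermediate objects are not a priori in $\D$, and one must carefully leverage the fact that $\D$-objects become zero in $\und{\C}_\D$ to complete the identification of $B$ in the essential image of $\Sigma_\D$.
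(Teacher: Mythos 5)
The paper itself does not spell this proof out: it simply notes that the arguments of Assem--Beligiannis--Marmaridis and Beligiannis--Marmaridis ``may be recycled to the extriangulated setting''. Your proposal is in exactly that spirit -- a direct verification of (R1)--(R4), extension-closure and the fully-faithfulness criterion using (ET3), (ET4), Lemma \ref{lem:LN1.20} and Lemma \ref{NP3.5} -- so it is not a different route but an attempt to carry out the recycling. Much of it is fine: (R1), (R3), the faithfulness argument, and your reformulation of $\D \subseteq \mathsf{Proj}_{\II_\D}\C$ as ``every morphism $D \to \SD B$ with $D \in \D$ factors through $p_B$'' are all correct (the latter via the long exact sequence and pullback-stability of $\II_\D$). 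The sketches of (R2), (R4) and the extension-closure of $\SD\und{\C}_\D$ remain outlines (and in (R2) the phrase ``identify the cone of $g_\ast\delta_B$ with $\SD A$'' is off: the cone of $g_\ast\delta_B$ is $\SD B$, and what must be checked is that the rotated sequence, with the sign $-\SD\und{f}$, is stably isomorphic to the image of $g_\ast\delta_B$), but this is comparable in rigour to the citation the paper gives.

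There is, however, a genuine gap in the fullness step. You produce $f$ with $i_Bf = hi_A$ only from $p_Bhi_A = gp_Ai_A = 0$, and then assert that $(f,h,g)$ ``realises a morphism of extriangles''. Commutativity of the two squares does \emph{not} imply $f_\ast\delta_A = g^\ast\delta_B$, and that compatibility is precisely what the uniqueness argument of Lemma \ref{lem:shiftSD} needs in order to conclude $\und{g} = \und{\SD f}$ (already in a triangulated category, two commuting squares do not force the third). Concretely, take $\C$ triangulated and $\D = \{0\}$, so $i_A = 0$, $p_A = 0$ and $\SD = \Sigma$: your recipe then permits the choice $f = 0$ for an arbitrary $g\colon \Sigma A \to \Sigma B$, and the argument as written would ``prove'' $\und{g} = 0$. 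The repair is to obtain $f$ from (ET3)$^{\mathrm{op}}$ applied to the commuting square $p_Bh = gp_A$, which yields $f$ with $(f,g)\colon \delta_A \to \delta_B$ a morphism of $\EE$-extensions; with that choice Lemma \ref{lem:shiftSD} does give $\und{g} = \und{\SD f}$. (This mirrors how the paper uses (ET3) in the surjectivity step of Proposition \ref{funcyisom}(c).) Finally, the converse direction of the fully-faithfulness criterion is only asserted (``fully-faithfulness translates into the required lifting property''); as it stands this is a claim, not an argument, and it needs to be supplied.
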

\begin{proof}
The arguments of \cite[Theorem 3.3]{assem1998right} and \cite[Theorem 3.1]{beligiannis1994left} may be recycled to the extriangulated setting.
\end{proof}

We can precisely describe when $\Sigma_\D$ is a right semi-equivalence.

\begin{prop} \label{funcyisom}
Suppose additionally that cones of left $\D$-approximations are $\D$-epic. Then the following hold. 
\be 
\item $\II_\D = \DD_\D$. 
\item $\underline{\C}_\D$ is a right triangulated category with right-semi equivalence.
\item 
For all $A, C \in \mathcal{C}$ there is a functorial isomorphism of abelian groups \begin{align*}
F = F_{C,A}:  \und{\C}_\D (C, \Sigma_\D A ) & \overset{\cong}{\longrightarrow}  \DD_\D(C, A) \\  \und{f} &\longmapsto f^{\ast}\delta_A.
\end{align*}
\ee 
\end{prop}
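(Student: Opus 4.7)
My plan is to prove the three parts in order: (a) is the main technical step, (b) is an immediate corollary via Proposition \ref{thm:abm}, and (c) is essentially (a) combined with Lemma \ref{NP3.5}. The crucial input throughout is that the deflation $p_A$ of the chosen extriangle $\delta_A$ is $\D$-epic, which is a reformulation of the new hypothesis that cones of left $\D$-approximations are $\D$-epic.

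For part (a), the inclusion $\DD_\D \subseteq \II_\D$ is immediate, so the content is to show that every $\II_\D$-extriangle $\delta: A \to B \to C$ has $\D$-epic deflation $y$. Since the inflation $x$ is $\D$-monic, $i_A$ factors as $i_A = sx$ for some $s: B \to \D(A)$, and (ET3) produces $f: C \to \SD A$ making $(1_A, s, f): \delta \to \delta_A$ a morphism of extriangles, so $\delta = f^\ast \delta_A$. The dual of Lemma \ref{lem:LN1.20} together with Corollary \ref{cor:weakpushout} realise $\delta$ so that the right-hand square of this morphism is a weak pullback of $p_A$ along $f$. Given any $g: D' \to C$ with $D' \in \D$, the hypothesis factors $fg$ as $p_A t$ for some $t: D' \to \D(A)$, and the weak pullback property then lifts $g$ through $y$. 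Part (b) is now immediate: Proposition \ref{thm:abm} reduces the right semi-equivalence to checking $\D \subseteq \mathsf{Proj}_{\II_\D} \C$, and (a) says $\II_\D$-deflations are $\D$-epic, so objects of $\D$ are automatically $\II_\D$-projective.

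For part (c), well-definedness on $\CD$ follows from the long exact sequence of Lemma \ref{lem:extriles} applied to $\delta_A$: if $f$ factors through some $D' \in \D$, the $\D$-epicity of $p_A$ forces $f$ to factor through $p_A$, whence $f^\ast \delta_A = 0$. The image lies in $\DD_\D$ since the inflation of $f^\ast \delta_A$ inherits $\D$-monicity from $i_A$ via the middle component of the morphism $f^\ast \delta_A \to \delta_A$, and part (a) upgrades this to $\DD_\D$. Injectivity of $F$ is the same exact sequence read in reverse: $f^\ast \delta_A = 0$ forces $f$ to factor through $p_A$ and hence through $\D(A) \in \D$. Surjectivity is the argument of (a) run in reverse: given $\eta \in \DD_\D$, the $\D$-monicity of its inflation produces $s$, and (ET3) yields $f: C \to \SD A$ with $\eta = f^\ast \delta_A$. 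Additivity of $F$ and naturality in both arguments are immediate from the bifunctoriality of $\EE$.

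The main technical obstacle is the identification in part (a) of an abstract $\II_\D$-extriangle with an explicit weak-pullback realisation of $f^\ast \delta_A$; this pivot is what allows the $\D$-epic hypothesis on $p_A$ to interact with arbitrary $\II_\D$-extriangles, and it is reused for the surjectivity in part (c). Once this reduction is in place, everything else is bookkeeping with Lemma \ref{NP3.5} and the long exact sequences of Lemma \ref{lem:extriles}.
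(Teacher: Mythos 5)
Your proposal is correct and follows essentially the same route as the paper: your explicit weak-pullback argument for (a) just makes concrete what the paper delegates to Proposition \ref{prop:new exact str} (namely that every $\II_\D$-extension is a pullback $f^\ast\delta_A$ and that $\D$-epicity of deflations is preserved under such pullbacks, with $\delta_A\in\DD_\D$ by the new hypothesis), (b) is read off from Proposition \ref{thm:abm} exactly as you say, and your treatment of (c) coincides with the paper's. One small caveat: naturality of $F$ in the second variable is not purely bifunctoriality of $\EE$ --- for a morphism $\und{a}:\SD A\to\SD A'$ you must write $\und{a}=\SD\und{\alpha}$ (fullness of $\SD$, available from (b)) and invoke the defining compatibility $(\SD\alpha)^\ast\delta_{A'}=\alpha_\ast\delta_A$ from the construction of $\SD$, which is exactly how the paper argues.
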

\begin{proof}
Claims (a) and (b) follow directly from Propositions \ref{prop:new exact str} and \ref{thm:abm}. It remains to show (c): 

\textbf{$F$ is well-defined:} We must check that if $\und{f} = \und{g} \in \CD (C, \SD A)$ then $f^\ast \delta_A = g^\ast \delta_A$. Indeed, in this case, $\und{f-g} =0$ and, since $p_A$ is $\D$-epic by part (a), $f-g$ factors through $p_A$. Now, by Lemma \ref{NP3.5}, $(f-g)^\ast \delta_A =0$ and we are done.  Note that this also shows the injectivity of $F$, since $f^* \delta_A = 0$ implies that $f$ factors through $p_A$ and thus $\und{f} =0$.

\textbf{$F$ is bijective:}
It remains to show that $F$ is surjective. Let $\gamma \in \DD_\D(C, A)$ be realised by $\begin{tikzcd} A \arrow[r,  "x"]  & B  \arrow[r,  "y"] & C \end{tikzcd}$. Then, since $x$ is $\D$-monic there exists $g:B \to \D(A)$ such that $gx = i_A$. By (ET3), there then exists $f: C \to \Sigma_\D A$ such that $(1, f): \gamma \to \delta_A$ is a morphism of extriangles. In other words, $\gamma = f^{\ast}\delta_A =:F(f)$.
\[\begin{tikzcd} A \arrow[r, "x"] \arrow[d, equal] & B \arrow[d, dashed, " \exists g"] \arrow[r, "y"] & C \arrow[d, dashed, "\exists f"] \ar[r, "f^{\ast}\delta_A", dashed] &{} \\ A \arrow[r, "i_A"'] & \D(A)\arrow[r, "p_A"'] & \Sigma_\D A \ar[r, "\delta_A", dashed] & {.}  \end{tikzcd} \]

\textbf{$F$ is a homomorphism of abelian groups:} This is straightforward: 
\[ F(f + f') = (f+f')^{\ast} \delta_A = f^\ast \delta A + f'^\ast \delta_A = F(f) + F(f'). \]

\textbf{Functorality in the first argument:} Let $\und{c} \in \C(C,C')$. Then for all $\und{f} \in \und{\C}_\D(C', \Sigma_A)$
\begin{align*}
F_{C,A} \und{\C}_\D(\und{c}, \Sigma_\D A)&: \und{f} \longmapsto \ \und{fc} \ \ \, \longmapsto  (fc)^\ast \delta_A \\ 
 \DD_\D(c, A) F_{C',A}&:  \und{f} \longmapsto  f^{\ast} \delta_A \longmapsto c^{\ast} f^{\ast} \delta_A = (fc)^\ast \delta_A
\end{align*} which proves the claim.  

\textbf{Functorality in the second argument:}  Let $\und{a} \in \CD(\Sigma_D A, \SD A')$. Since $\SD$ is full by part (b), there exists $\und{\alpha} \in \CD( A, A')$ such that $\SD \und{\alpha} = \und{a}$. Then for all $\und{f} \in \CD(C, \SD A)$
\begin{align*}
F_{C,A'} \und{\C}_\D(C, \und{a})&:  \und{f} \longmapsto \ \und{af} \ \ \, \longmapsto (af)^\ast \delta_A \\ 
\DD_\D(C, \alpha) F_{C,A}&:  \und{f} \longmapsto f^{\ast} \delta_A \longmapsto \, \alpha_{\ast} f^{\ast} \delta_A = (f)^\ast \alpha_{\ast} \delta_A = (f)^\ast a^{\ast} \delta_A
\end{align*} 
where the last equality in the second line follows from the definition of $\SD$. 
\end{proof}

In light of Corollary \ref{cor:frobextri}, extriangulated categories $(\C, \EE, \fs)$ such that the injective stable category $\und{C}_{\mathsf{Inj} \EE}$ is a right triangulated with right semi-equivalence have a `one-sided Frobenius' property: There are enough injectives and each injective object is projective. This imbalance of projectives and injectives and also Lemma~\ref{lem:righttrifinite} indicate that we must look in extriangulated categories with infinitely many objects for  examples of quotients that are right triangulated with right semi-equivalence.

\begin{exmp}
\be 
\item Let $Q$ be the infinite quiver 
\[ 1 \leftarrow 2 \leftarrow 3 \leftarrow \dots \]
and consider the category $\A = \mathrm{mod}KQ / \mathsf{rad}^m$ for some $m>1$. $\A$ is an abelian category and with it's maximal exact structure it is an extriangulated category. Observe that $\A$ has enough injectives and that each injective object is projective. Indeed, $I_r = P_{r+m-1}$. But not every projective object is injective, for instance $P_1 = S_1$ is not injective. Thus, by Theorem \ref{thm:abm}, the quotient category $\underline{\A}_{\mathsf{Inj}\A}$ is a right triangulated category with semi-equivalence with the shift  given by the co-syzygy functor. 

\item  Let $\C = (\C, \EE, \fs)$ be an extriangulated category and let $(\U, \V)$ be a cotorsion pair in $\C$ \cite[Definition 4.1]{nakaoka2019extriangulated}. It is easily verified that $\V$ satisfies property $(\ast)$. Thus $\underline{\C}_\V$ is a right triangulated category.  We will investigate examples of this flavour for the case of Frobenius extriangulated categories further in Section~\ref{sec:aislesquotients}. Let us note that the class of subcategories satisfying property $(\ast)$ more general than the class of cotorsion pairs, since the subcategories giving cotorsion pairs  must be closed under extensions.

\item  Let $\T$ be a compactly generated triangulated category. Recall that a subcategory $\X$ of $\T$ is \textit{definable} if there is a class of morphisms $\omega$ between compact objects in $\T$ such that $\X = \mathsf{Inj}(\omega)$ \cite[Section 4.1]{AngeleriHuegel2017}; where it was also shown that every definable category admits left approximations. Thus, since in a triangulated category every morphism is an $\EE$-inflation, it follows that $\underline{\T}_\X$ is a right triangulated category. 

\ee The author would be interested to know if is possible to classify when the examples in (b) and (c) that result in right triangulated categories with right semi-equivalence.   

\end{exmp}
\subsection{Right triangulated extriangulated categories} \label{S3.3}

We  characterise right triangulated categories as extriangulated categories. We begin with some terminology and a useful lemma.

\begin{defn}
Let $(\R, \Sigma, \Delta)$ be a right triangulated category. We say that the right triangulation $\Delta$ \textit{induces an extriangulated structure on $\R$} if there exists an external triangluation $(\EE, \fs)$ of $\R$ such that for all right triangles $A\overset{x}{\longrightarrow}B\overset{y}{\longrightarrow}C \overset{z}{\longrightarrow} \Sigma A$, there is an extriangle $A\overset{x}{\longrightarrow}B\overset{y}{\longrightarrow}C \overset{\delta}{\dashrightarrow}$.
\end{defn}

\begin{lem} \label{lem:np3.30} \cite[Proposition 3.30]{nakaoka2019extriangulated}  Let  $(\C, \EE, \fs)$ be an extriangulated category and $\D \subseteq \mathsf{Proj}_\EE (\C) \cap \mathsf{Inj}_\EE (\C)$ be a full, additive, replete subcategory. Then the stable category $\CD$ inherits an external triangulation, $(\und{\EE}, \und{\fs})$, given by 
\be 
\item  $\und{\EE}(C,A) = \EE(C, A)$ for all $A, C \in \C$;
\item $\und{\EE}(\und{c}, \und{a}) = \EE(c, a)$ for all $a \in \C(A, A')$, $c \in \C(C', C)$;
\item $\und{\fs}(\delta) = [\begin{tikzcd}[cramped, sep=small] A \ar[r, "\und{x}"] & B \ar[r, "\und{y}"] & C \end{tikzcd}]$ 
where $\fs(\delta) = [\begin{tikzcd}[cramped, sep=small] A \ar[r, "x"] & B \ar[r, "y"] & C \end{tikzcd}]$ for all extriangles $\delta$.
\ee 
\end{lem}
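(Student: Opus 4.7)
The plan is to verify each part of the definition of an extriangulated structure on $\CD$, reducing everything to the corresponding property of $(\C, \EE, \fs)$ by lifting. The only real work is checking well-definedness of $\und{\EE}$ on morphisms and adjusting lifts so that squares which commute in $\CD$ commute on the nose in $\C$.

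First I would verify that $\und{\EE}(\und{c}, \und{a}) := \EE(c,a)$ is independent of the chosen representatives $a, c$. If $a - a'$ factors through some $D \in \D$, then the induced map $(a-a')_{\ast}: \EE(-, A) \to \EE(-, A')$ factors through $\EE(-, D)$; this vanishes since $D \in \mathsf{Inj}_\EE \C$. Dually, if $c - c'$ factors through $D' \in \D$, then $(c-c')^\ast$ factors through $\EE(D', -) = 0$ since $D' \in \mathsf{Proj}_\EE \C$. Additivity and bifunctoriality of $\und{\EE}$ are then inherited from $\EE$, giving (ET1). For well-definedness of $\und{\fs}$ I would note that any isomorphism in $\C$ projects to an isomorphism in $\CD$, so equivalent representatives of $\fs(\delta)$ in $\C$ remain equivalent in $\CD$. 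The axioms (S1) and (S2) in $\CD$ are immediate from the corresponding axioms in $\C$. For (S3) in $\CD$, given a morphism of $\und{\EE}$-extensions $(\und{a}, \und{c}): \delta \to \delta'$, choose lifts $a, c$ in $\C$. By definition of $\und{\EE}$, the equation $\und{a}_\ast \delta = \und{c}^\ast \delta'$ becomes $a_\ast \delta = c^\ast \delta'$, so applying (S3) in $\C$ produces a realization $(a,b,c)$ in $\C$ which projects to the desired realization in $\CD$.

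The main technical step is (ET3) (and its dual (ET3)\textsuperscript{op}). Suppose we are given a commutative square
\[ \begin{tikzcd}[ampersand replacement = \&] A \arrow[r, "\und{x}"] \arrow[d, "\und{a}"'] \& B \arrow[d, "\und{b}"] \\ A' \arrow[r, "\und{x'}"'] \& B' \end{tikzcd} \]
in $\CD$ whose rows extend to $\und{\EE}$-extriangles $\delta, \delta'$. Lift $\und{a}, \und{b}$ to morphisms $a, b$ in $\C$; then $bx - x'a$ factors as $jk$ with $k: A \to D$ and $j: D \to B'$ for some $D \in \D$. Since $D$ is $\EE$-injective and $x$ is an $\EE$-inflation, the morphism $k$ extends along $x$ to some $k': B \to D$ with $k'x = k$. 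Setting $b'' := b - jk'$ gives $\und{b''} = \und{b}$ in $\CD$ and $b''x = x'a$ strictly in $\C$. Now (ET3) in $\C$ produces $c$ such that $(a,b'',c)$ realises a morphism $\delta \to \delta'$ of $\EE$-extriangles, and projecting yields a morphism of $\und{\EE}$-extriangles in $\CD$. The dual uses the $\EE$-projectivity of $\D$ and the deflation property.

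Finally, for (ET4) and (ET4)\textsuperscript{op}, I would choose lifts of the given composable $\und{\EE}$-inflations $\und{f}: A \to B$ and $\und{g}: B \to C$ to morphisms $f, g$ in $\C$ so that $\fs(\delta) = [A \xrightarrow{f} B \xrightarrow{f'} D]$ and $\fs(\delta') = [B \xrightarrow{g} C \xrightarrow{g'} F]$. Applying (ET4) in $\C$ to $f$ and $g$ yields the full octahedral diagram with extriangles as rows and columns, and projecting the whole diagram to $\CD$ gives an octahedral diagram in $\und{\fs}$, using that $\und{fg} = \und{f}\,\und{g}$ and the compatibility of the bifunctor $\und{\EE}$ with the lifted comparison morphisms. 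The one thing to watch is that the identities $h^\ast \delta'' = \delta$ and $f_\ast \delta'' = (h')^\ast \delta'$ pass from $\EE$ to $\und{\EE}$, which is automatic because $\und{\EE}(\und{h}, -) = \EE(h, -)$ on representatives. The main obstacle throughout is the adjustment trick in (ET3); once that is in place, every other axiom reduces cleanly to its counterpart in $\C$.
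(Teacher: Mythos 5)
The paper offers no proof of this lemma at all --- it is imported verbatim from Nakaoka--Palu \cite[Proposition 3.30]{nakaoka2019extriangulated} --- so your sketch can only be measured against the standard argument, and in outline it follows it: the well-definedness of $\und{\EE}$ on morphisms via $\EE(D,-)=0=\EE(-,D)$ for $D\in\D$, the inheritance of (S1)--(S2), the lifting argument for (S3), and above all the correction trick for (ET3) (replace $b$ by $b-jk'$, using that $D$ is $\EE$-injective so that $k:A\to D$ extends along the $\EE$-inflation $x$) are exactly the right ingredients, with the duals handled by $\EE$-projectivity.

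There is, however, a genuine gap in how you feed the data of $\CD$ back into $\C$, most visibly in (ET4): you ``choose lifts of $\und{f},\und{g}$ so that $\fs(\delta)=[A\xrightarrow{f}B\xrightarrow{f'}D]$ and $\fs(\delta')=[B\xrightarrow{g}C\xrightarrow{g'}F]$''. Such lifts need not exist, because $\und{\fs}(\delta)$ is by definition the $\CD$-equivalence class of the image of $\fs(\delta)$, and this class is strictly larger than that image: middle terms of representatives only agree up to isomorphism in $\CD$, not in $\C$. Concretely, if $\fs(\delta)=[A\xrightarrow{x}B_0\xrightarrow{y}D]$ and $0\neq D_0\in\D$, then the reduction of $A\xrightarrow{\bsm x \\ 0 \esm}B_0\oplus D_0\xrightarrow{\bsm y & 0\esm}D$ also represents $\und{\fs}(\delta)$, yet (say in a Krull--Schmidt situation) no sequence with middle term $B_0\oplus D_0$ realises $\delta$ in $\C$. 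The same issue is latent in your (ET3) step: for an arbitrary representative of $\und{\fs}(\delta)$ the chosen lift of its first morphism is not known to be an $\EE$-inflation, which is precisely what your extension-along-$x$ argument uses. The standard repair is to first observe that the conclusions of (S3), (ET3) and (ET4) are invariant under replacing the given realising sequences by equivalent ones in $\CD$: conjugating the completed diagrams by the $\CD$-isomorphisms of the middle terms preserves all commutativities, and the extension-theoretic conditions ($\und{a}_\ast\delta=\und{c}^\ast\delta'$, $h^\ast\delta''=\delta$, $f_\ast\delta''=(h')^\ast\delta'$) involve only the outer terms, which are untouched. Hence it suffices to verify these axioms for representatives that are reductions of $\fs$-representatives, and for those your lifting and correction arguments go through as written; with that reduction inserted, your sketch becomes a complete proof along the lines of the cited result.
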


\begin{exmp} \label{ex:tricatsextri} We give two important classes of examples. \be

\item Let $\T$ be a triangulated category. Then the triangulation of $\T$ is a right triangulation which induces an extriangulated structure on $\T$. See \cite[Section 3.3]{nakaoka2019extriangulated} This was a motivating example for the introduction of extriangulated categories.

\item  Let  $(\C, \EE, \fs)$ be an extriangulated category and $\D \subset \C$ be a subcategory satisfying property $(\ast)$ and such that all cones of left $\D$-approximations are $\D$-epic.   By Proposition \ref{funcyisom}(a) we have that $\D \subseteq \mathsf{Proj}_\EE (\C) \cap \mathsf{Inj}_\EE (\C)$ and it follows from part (c) of the same result that the right triangulated structure on $\CD$ of Proposition \ref{thm:abm} coincides with the extriangulated structure of Lemma \ref{lem:np3.30}. 
 In other words, the right triangulation of $\CD$ (with right semi-equivalence) induces an extriangulation on $\CD$.
 Let us note that this could also be deduced from a combination of Lemma \ref{lem: T extn closed}, Remark \ref{rem:extnclosedextri} and the above Example.
\ee
 \end{exmp}

We may now state and prove the main result of this section.

\begin{thm} \label{thm:nicerighttriextri}
Let $(\mathcal{C}, \EE, \fs)$ be an extriangulated category. Then the following are equivalent 
\be \item There exists a fully faithful additive endofunctor $\Sigma : \C \to \C$ such that $\EE(-, ?) \cong \C (-, \Sigma ?)$ and that the image $\Sigma \C $ is closed under $\EE$-extensions;  
\item $\mathsf{Inj}_\EE (\C) = \{0 \}$ and there are enough $\EE$-injectives;
\item  There is a right triangulation of $\C$ that induces the extriangulated structure $(\EE, \fs)$.  
\ee
\end{thm}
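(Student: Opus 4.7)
The plan is to prove the three equivalences by cycling (a) $\Rightarrow$ (b) $\Rightarrow$ (c) $\Rightarrow$ (a).

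For (a) $\Rightarrow$ (b), the iso $\EE(-, I) \cong \C(-, \Sigma I)$ and Yoneda together with faithfulness of $\Sigma$ immediately give $\mathsf{Inj}_\EE \C = \{0\}$. For enough $\EE$-injectives, realise the class $\delta_A \in \EE(\Sigma A, A)$ corresponding to $1_{\Sigma A}$ as an extriangle with $\fs(\delta_A) = [A \overset{x}{\to} B \overset{y}{\to} \Sigma A]$ and claim $B = 0$. Both connecting maps $\delta_{A\#}$ and $\delta_A^\#$ of Lemma \ref{lem:extriles} are identified, by naturality and Yoneda, with bijections---the first with the iso from (a), and the second with the action $g \mapsto \Sigma g$ on morphism spaces (bijective by $\Sigma$ fully faithful). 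A short diagram chase through the two long exact sequences then forces $y = 0$, $x = 0$, and a section $s \colon B \to A$ of $x$ with $xs = 1_B$; combining, $1_B = xs = 0$, so $B = 0$, yielding the required extriangle realising $A \to 0$ as an $\EE$-inflation.

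For (b) $\Rightarrow$ (c), apply Propositions \ref{thm:abm} and \ref{funcyisom} with $\D = \{0\}$. Property $(\ast)$ is trivially satisfied for this $\D$: additivity is clear, covariant finiteness holds via the unique morphism $A \to 0$, and the requirement that left $\D$-approximations be $\EE$-inflations is precisely the content of (b). The additional hypothesis of Proposition \ref{funcyisom} (cones of left $\D$-approximations are $\D$-epic) is vacuous. The ideal of morphisms factoring through $0$ is trivial, so $\CD = \C$; moreover every morphism is tautologically $\{0\}$-monic and $\{0\}$-epic, giving $\DD_\D = \EE$. Proposition \ref{funcyisom} then delivers a right triangulation of $\C$ with right semi-equivalence $\SD$ and natural iso $\C(-, \SD ?) \cong \EE(-, ?)$, and by Example \ref{ex:tricatsextri}(b) it induces the original extriangulated structure.

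For (c) $\Rightarrow$ (a), using (R1)(iii) embed $A \to 0$ into a right triangle $A \to 0 \to C_A \to \Sigma A$. Its induced extriangle has long exact sequence collapsing to a natural iso $\C(-, C_A) \cong \EE(-, A)$, and applying Lemma \ref{lem:extriles} to the rotation $0 \to C_A \to \Sigma A$ shows via Yoneda that the triangle morphism $C_A \to \Sigma A$ is invertible. This yields the natural iso $\EE(-, A) \cong \C(-, \Sigma A)$. Dually, the second long exact sequence for $B \to 0 \to \Sigma B$ gives $\EE(\Sigma B, -) \cong \C(B, -)$, and the composite bijection $\C(B, A) \cong \EE(\Sigma B, A) \cong \C(\Sigma B, \Sigma A)$ is identified with $f \mapsto \Sigma f$ by naturality (using the morphism of extriangles between $B \to 0 \to \Sigma B$ and $A \to 0 \to \Sigma A$ induced by $f$), whence $\Sigma$ is fully faithful. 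For closure of $\Sigma \C$ under $\EE$-extensions, any $\delta \in \EE(\Sigma A, \Sigma B)$ corresponds via these isos and $\Sigma$ fully faithful to some $\delta' \in \EE(A, B)$; its realisation $B \to E \to A$ produces, after two applications of (R2), a right triangle $\Sigma B \to \Sigma E \to \Sigma A \to \Sigma^2 B$ whose connecting map matches that of the realisation of $\delta$, so by (R3) the middle term of $\delta$ is isomorphic to $\Sigma E \in \Sigma \C$.

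The principal obstacle lies in (c) $\Rightarrow$ (a): both the identification of the composite $\C(B, A) \cong \C(\Sigma B, \Sigma A)$ with $\Sigma$ on morphisms, and the argument for extension closure of $\Sigma \C$, require careful tracing through the connecting morphisms and (R2)-rotations (including handling sign conventions). The other two implications are mostly direct applications of the structural machinery developed in Sections \ref{S3.1} and \ref{S3.2}.
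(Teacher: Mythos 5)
Your first two legs are fine and are essentially the paper's own arguments: (a)$\Rightarrow$(b) is the paper's computation with the extriangle realising the class corresponding to $1_{\Sigma A}$ under $\EE(\Sigma A,A)\cong\C(\Sigma A,\Sigma A)$ (the paper gets $B\cong 0$ from the first long exact sequence alone; your additional use of the second sequence and the direct Yoneda argument for $\mathsf{Inj}_\EE\C=\{0\}$ are harmless variants), and (b)$\Rightarrow$(c) is exactly the paper's application of Propositions \ref{thm:abm} and \ref{funcyisom} with $\D=\{0\}$ together with Example \ref{ex:tricatsextri}(b). Where you diverge is the closing leg: the paper completes the cycle with the two-line implication (c)$\Rightarrow$(b) --- by (R1)(iii) every $A\to 0$ embeds in a right triangle, hence is an $\EE$-inflation, so there are enough $\EE$-injectives, and any $\EE$-injective $I$ sits in an extriangle $I\to 0\to C$, which splits and forces $I=0$. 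Since (a) is an existence statement, nothing more is needed; the fully faithful functor is then produced by the (b)$\Rightarrow$(a) machinery and need not be the shift of the given right triangulation.

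Your (c)$\Rightarrow$(a), by contrast, tries to verify that the \emph{given} $\Sigma$ works, and here there is a genuine gap. The definition of ``induces an extriangulated structure'' only says that for every right triangle $A\to B\to C\to\Sigma A$ the pair $A\to B\to C$ realises \emph{some} class; it imposes no compatibility between the classes chosen by $\fs$ and the third morphisms $z$, nor with the action of $\Sigma$ on morphisms. Your identification of the composite bijection $\C(B,A)\cong\EE(\Sigma B,A)\cong\C(\Sigma B,\Sigma A)$ with $f\mapsto\Sigma f$ requires exactly such a compatibility: (ET3) produces $c\colon C_B\to C_A$ with $f_*\delta_B=c^*\delta_A$, while (R3) produces $g$ with $z_A g=\Sigma f\, z_B$, and since the middle terms are $0$ the commutativity constraints are vacuous, so nothing in the hypotheses forces $c=g$. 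Without this, neither the full faithfulness of the given $\Sigma$ nor the naturality of $\EE(-,?)\cong\C(-,\Sigma ?)$ in the covariant variable follows; the same problem recurs in your extension-closure step, where the class of the (R2)-rotated triangle $\Sigma B\to\Sigma E\to\Sigma A$ is compared with the original $\delta$ although the hypotheses give no control over the class attached to a rotated triangle. What you are trying to prove is in effect the ``only if'' content of Corollary \ref{cor:righttriextri} and is not mere bookkeeping with signs, as your final remark suggests. The repair is simple: replace (c)$\Rightarrow$(a) by the paper's (c)$\Rightarrow$(b), or prove (c)$\Rightarrow$(b) and then feed it into your own (b)$\Rightarrow$(a) construction, accepting that the resulting endofunctor is $\Sigma_{\{0\}}$ rather than (visibly) the given $\Sigma$.
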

\begin{proof}
\textbf{(a)$\Rightarrow$(b):} We claim that, for each $A \in \C$, the morphism $1_{\Sigma A} \in \C(\Sigma A, \Sigma A) \cong \EE(A, \Sigma A)$ is realised by the sequence $[A \to 0 \to \Sigma A]$. Indeed, we may use a similar argument to that of \cite[Lemma 3.21]{nakaoka2019extriangulated}: Let $\fs (1_{\Sigma A} )=  [\begin{tikzcd}[cramped, sep=small] A \ar[r, "x"] & E \ar[r, "y"] & \Sigma A \end{tikzcd} ]$, then by Lemma \ref{lem:extriles} there is a long exact sequence in $[\C^{\text{op}}, \mathsf{Ab}]$ 
\[ \begin{tikzcd}[ampersand replacement = \&] \C(-,A) \arrow[r, "{\C(-,x)}"] \& \C(-,E) \arrow[r, "{\C(-,y)}"] \& \C(-,\Sigma A) \arrow[r, "{(1_{\Sigma A})_\# = \text{id}}"] \& 
\C(-,\Sigma A)  \arrow[r, "{\C(-,\Sigma x)}"] \& \C(-,\Sigma E). \end{tikzcd}\] It follows that $ y=0= \Sigma x$. Thus $x=0$ since $\Sigma $ is faithful. Now the exactness of $0 \to \C(-,E) \to 0$ implies that $E \cong 0$.  Thus, if $I$ is an $\EE$-injective object then it is a direct summand of $0$ and so $I=0$. 

\textbf{(b)$\Rightarrow$(a),(c):} In this case, the subcategory $\{0\}$ satisfies $\EE = \II_{\{0\}}=\DD_{\{0\}}$. Thus the stable category $\und{\C}_{\{0\}} \cong \C$ has a right triangulated structure with right semi-equivalence by Propositions \ref{thm:abm} and \ref{funcyisom}. The claims follow from Example \ref{ex:tricatsextri}(b).

\textbf{(c)$\Rightarrow$(b):} By the axiom (R3)(iii), for all $A \in \C$, the morphism $A \to 0$ is the first morphism in a right triangle. Thus, by assumption, $A \to 0$ is an $\EE$-inflation and the claim follows.
\end{proof}

As a direct consequence, we see that, in general, right triangulated categories do not have a natural extriangulated structure. 

\begin{cor} \label{cor:righttriextri}
Let $(\R, \Sigma, \Delta)$ be a right triangulated category. Then $\Delta$ induces an extriangulated structure on $\R$ if and only if $\Sigma$ is a right semi-equivalence. 
\end{cor}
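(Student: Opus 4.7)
The plan is to deduce the corollary from Theorem \ref{thm:nicerighttriextri} together with the material on the stabilisation $\S(\R)$ from Section \ref{section:costab}.

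For the direction ($\Leftarrow$), suppose $\Sigma$ is a right semi-equivalence. By Lemma \ref{lem: T extn closed}, $\R$ is extension-closed in the triangulated category $\S(\R)$, which is itself extriangulated by Example \ref{ex:tricatsextri}(a). Remark \ref{rem:extnclosedextri} then endows $\R$ with an inherited external triangulation $(\EE, \fs)$, whose extriangles are exactly those triangles of $\S(\R)$ lying entirely in $\R$. Under the right semi-equivalence hypothesis, the remark following Equation~(\ref{ST triangles}) allows us to take $k = 0$ in the definition of a triangle of $\S(\R)$, so these inherited extriangles correspond precisely to the right triangles of $\Delta$. Hence $\Delta$ induces $(\EE, \fs)$.

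For ($\Rightarrow$), suppose $\Delta$ induces some extriangulated structure $(\EE, \fs)$ on $\R$. By Theorem \ref{thm:nicerighttriextri}, $(\textnormal{c}) \Rightarrow (\textnormal{a})$, there is a fully faithful additive endofunctor $\Sigma' : \R \to \R$ with $\EE(-,?) \cong \R(-, \Sigma' ?)$ and with $\Sigma' \R$ closed under $\EE$-extensions. It remains to identify $\Sigma$ with $\Sigma'$. Applying axiom (R2) twice to the trivial right triangle $0 \to A \xrightarrow{1_A} A \to 0$ from (R1)(ii) produces the right triangle $A \to 0 \to \Sigma A \xrightarrow{\pm 1_{\Sigma A}} \Sigma A$ in $\Delta$, which by assumption is an extriangle, say with class $\delta_A \in \EE(\Sigma A, A)$. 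The long exact sequence of Lemma \ref{lem:extriles} reads
\[ \R(C, 0) \to \R(C, \Sigma A) \xrightarrow{(\delta_A)_\#} \EE(C, A) \to \EE(C, 0), \]
and the outer terms vanish by additivity of $\R(C, -)$ and of $\EE$. Thus $\R(-, \Sigma A) \cong \EE(-, A) \cong \R(-, \Sigma' A)$ naturally in both arguments, and Yoneda yields a natural isomorphism $\Sigma \cong \Sigma'$. In particular $\Sigma$ inherits from $\Sigma'$ the properties of being fully faithful with extension-closed image, so $\Sigma$ is a right semi-equivalence. The only delicate step is this final identification of $\Sigma$ with the functor produced by the theorem; everything else is either an invocation of Theorem \ref{thm:nicerighttriextri} or a direct application of extension-closure inside the stabilisation.
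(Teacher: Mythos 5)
Your argument for the direction ``$\Sigma$ a right semi-equivalence $\Rightarrow$ $\Delta$ induces an extriangulated structure'' is correct and is essentially the route the paper itself has available: by Lemma \ref{lem: T extn closed} the category $\R$ is extension-closed in $\SR$, Remark \ref{rem:extnclosedextri} and Example \ref{ex:tricatsextri}(a) give the restricted external triangulation, and since $s$ is a right triangle functor every right triangle of $\Delta$ becomes a triangle of $\SR$ with all three terms in $\R$, hence an extriangle of the restricted structure. That half is fine.

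The converse direction has a genuine gap, and it sits exactly at the step you yourself flag as delicate. From the extriangle $A\to 0\to \Sigma A\overset{\delta_A}{\dashrightarrow}$, Lemma \ref{lem:extriles} gives an isomorphism $(\delta_A)_\#\colon \R(-,\Sigma A)\to \EE(-,A)$ which is natural in the contravariant variable \emph{only}. Naturality in $A$ is an extra identity, namely $f_*\delta_A=(\Sigma f)^*\delta_B$ for every $f\colon A\to B$, i.e.\ the assertion that $(f,\Sigma f)$ is a morphism of $\EE$-extensions $\delta_A\to\delta_B$; nothing you have established yields this. The hypothesis that $\Delta$ induces $(\EE,\fs)$ only constrains the first two morphisms of each right triangle, so it never ties the action of $\Sigma$ on morphisms (or the connecting maps) to the classes $\delta_A$. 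Axiom (ET3) applied to the square with vertical maps $f$ and $0$ produces \emph{some} $c\colon\Sigma A\to\Sigma B$ with $f_*\delta_A=c^*\delta_B$, and this $c$ is unique because $(\delta_B)_\#$ is injective; but every morphism $\Sigma A\to\Sigma B$ makes the plain diagram commute (all squares factor through $0$), so there is no reason the unique $c$ is $\Sigma f$. Without the identity, Yoneda gives only objectwise isomorphisms $\Sigma A\cong\Sigma' A$; this does show that $\Sigma\R$ coincides with $\Sigma'\R$ up to isomorphism closure and hence is extension-closed, but it does not transfer full faithfulness from $\Sigma'$ to $\Sigma$, which is precisely what the corollary claims about the given shift. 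To close the gap you must actually prove $f_*\delta_A=(\Sigma f)^*\delta_B$ (equivalently, identify $\Sigma$ with the functor $\Sigma_{\{0\}}$ of Proposition \ref{funcyisom} as functors, not merely on objects), using the right-triangulated axioms (R2), (R3) and the Hom-exactness of rotated triangles; citing Theorem \ref{thm:nicerighttriextri}, (c)$\Rightarrow$(a), does not supply this identification.
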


\begin{rem} \label{rem:negative extns}
The concept of negative (first) extensions of an extriangulated category has been recently introduced and studied \cite{Adachi2021, Gorsky2021}. For a triangulated category $\T$ one may take $\EE^{-1}(-,?) = \T(-, \Sigma^{-1} ?)$ as a negative first extension. Since by Lemma \ref{lem: T extn closed} a right triangulated category with right semi equivalence $\R$ is an extension closed subcategory of the triangulated category $\SR$ there is a natural first negative extension structure on $\R$ given by  $\EE^{-1}(C,A) := \SR(C, \Sigma^{-1}A) \cong \R(\Sigma C, A)$ for all $A, C \in \R$.
\end{rem}

\section{Aisles and co-aisles} \label{sec:aislescoaisles}

 In this section, we show that the language of extriangulated categories allows us, under some assumptions, to give an intrinsic characterisation of 
which right triangulated categories with right semi-equivalence occur as (co)-aisles of (co-)t-structures in its stabilisation. Let  $\R = (\R, \Sigma, \Delta)$  be a right triangulated category with right semi-equivalence.

\subsection{Torsion pairs}
 We begin by recalling some important definitions. 
 
\begin{defn} \label{defn:torsion} Let $\C= (\C, \EE, \fs)$ be an extriangulated category. 
A pair of additive subcategories, $(\mathcal{U}, \mathcal{V})$, of $\C$ is a \textit{torsion pair in $\C$} if 
\be \item $\mathcal{C}(\mathcal{U}, \mathcal{V}) = 0$;
\item For all $C \in \mathcal{C}$ there exists an extriangle 
\begin{equation} \label{eqn:torsion extriangle}
\begin{tikzcd} U \arrow[r, "u"] & C \arrow[r, "v"] & V \arrow[r, dashed] &{} \end{tikzcd}
\end{equation}   with $U \in \U$ and $V \in \V$. 
\ee
If $\C$ is a triangulated category (viewed naturally as an extriangulated category) with shift functor $\Sigma$, a  torsion pair in $\C$, $(\mathcal{U}, \mathcal{V})$, is a \textit{t-structure}  \cite{ Beuilinson1982}  (resp. \textit{co-t-structure} \cite{Bondarko2010, Pauksztello2008}) if $\Sigma \mathcal{U} \subseteq \mathcal{U}$ (resp. $\Sigma^{-1} \mathcal{U} \subseteq \mathcal{U}$) with \textit{heart} $\mathcal{H} = \mathcal{U} \cap \Sigma \mathcal{V}$ (resp. \textit{co-heart} $\mathcal{M} = \mathcal{U} \cap \Sigma^{-1} \mathcal{V}$). We call the subcategory $\mathcal{U}$ the \textit{aisle} of the (co-)t-structure and $\mathcal{V}$ the \textit{co-aisle}. A (co-)t-structure is \textit{bounded} if the equalities $\C = \bigcup_{n \in \mathbb{Z}} \Sigma^n \U = \bigcup_{m \in \mathbb{Z}} \Sigma^m \V $ hold. 
\end{defn}

The reader should be aware that the terminology and notation of torsion pairs and (co-)t-structures in a triangulated category varies, often by a shift, throughout the literature and that co-t-structures were introduced under the name `weight structures' in \cite{Bondarko2010}.

\begin{rem} \label{rem:torsion properties} Let $(\U, \V)$  be a torsion pair in $\C = (\C, \EE, \fs)$. Then it follows quickly from the definition that the following properties hold. 
\be \item $\U$, $\V$ are closed under extensions.
\item $\U = \{ A \in \C \mid \C(A, \V) =0\}$ and $\V = \{A \in \C \mid \C(\U, A ) =0\}$. 
\item The morphism $u$ (resp. $v$) in the extriangle \ref{eqn:torsion extriangle} is a right $\U$ (resp. left $\V$) approximation of $C$. 
\ee
\end{rem}

Before we proceed, let us compare the above definition of a torsion pair in $\R$ with other notions appearing in the literature: \be 
\item A torsion pair $(\U, \V)$ in $\R$ is a \textit{right torsion pair}  \cite{Lin2007} if $\Sigma$ preserves $\V$-monics.
\item When $\R$ is equipped with a negative first extension structure, $\EE^{-1}$, (see Remark \ref{rem:negative extns}) a torsion pair $(\U, \V)$ in $\R$ is an \textit{$\fs$-torsion pair} \cite{Adachi2021} if $\EE^{-1}(\U, \V) =0$. In this case the extriangle \ref{eqn:torsion extriangle} is essentially unique and assigments $C \mapsto U$ and $C \mapsto V$ are functorial \cite[Proposition 3.7]{Adachi2021}.  
\ee
The next lemma shows that right torsion pairs and $\fs$-torsion pairs in $\R$ coincide (when we equip $\R$ with the natural first negative extension structure $\EE^{-1}(-.?) = \SR( -, \Sigma^{-1} ?)$) and that such torsion pairs remind us of t-structures; which are precisely the $\fs$-torsion pairs in a triangulated category. For a class of objects $\mathcal{X}$ in $\R$, by $\Sigma^{-1}\mathcal{X}$ we denote the class of objects $\{ X \in \R \mid \Sigma X \in \mathcal{X}\}$. 

\begin{lem}
Let $(\U, \V)$ be a torsion pair in $\R$. Then the following are equivalent
\be 
\item  $\Sigma \U \subseteq \U$;
\item  $\Sigma^{-1}\V \subseteq \V$;

\item $\R( \Sigma \U , \V)=0$; 
\item  $\Sigma$ preserves $\V$-monics. 
\ee
\end{lem}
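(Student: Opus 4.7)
The plan is to establish the three equivalences $(a)\Leftrightarrow (c)$, $(a)\Leftrightarrow (b)$ and $(a)\Leftrightarrow (d)$, giving the desired four-way equivalence. The first, $(a)\Leftrightarrow (c)$, should be immediate from Remark~\ref{rem:torsion properties}(b), which states $\U=\{A\in\R\mid\R(A,\V)=0\}$; so $\Sigma\U\subseteq\U$ is by definition $\R(\Sigma\U,\V)=0$.

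For $(a)\Leftrightarrow (b)$, the forward direction exploits full-faithfulness of $\Sigma$: if $\Sigma X\in\V$ and $U\in\U$, then $\R(U,X)\cong\R(\Sigma U,\Sigma X)=0$, where the vanishing uses $\Sigma U\in\U$ by $(a)$ and $\Sigma X\in\V$; hence $X\in\V$. The converse $(b)\Rightarrow(a)$ is where the right semi-equivalence hypothesis (extension-closure of $\Sigma\R$) is essential. Given $U\in\U$, I would take the torsion triangle $U'\to\Sigma U\to V'\to\Sigma U'$ of $\Sigma U$ and rotate via (R2) to the right triangle $\Sigma U\to V'\to\Sigma U'\to\Sigma^2 U$. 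Since $\Sigma U,\Sigma U'\in\Sigma\R$, extension-closure forces $V'\cong\Sigma V''$ for some $V''\in\R$; by $(b)$, $V''\in\V$, and full-faithfulness then gives $\R(\Sigma U,V')\cong\R(U,V'')=0$. Hence the left $\V$-approximation $\Sigma U\to V'$ is the zero map, and since every morphism from $\Sigma U$ to an object of $\V$ factors through it, $\R(\Sigma U,\V)=0$, i.e.\ $\Sigma U\in\U$.

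For $(a)\Leftrightarrow (d)$: the implication $(d)\Rightarrow(a)$ is immediate because for $U\in\U$ the morphism $U\to 0$ is vacuously $\V$-monic (every map $U\to V\in\V$ is zero), so $(d)$ upgrades $\Sigma U\to 0$ to a $\V$-monic, forcing $\R(\Sigma U,\V)=0$. The main effort goes into $(a)\Rightarrow(d)$, which I plan to carry out via the octahedral axiom (R4). Given $f:A\to B$ that is $\V$-monic, the left $\V$-approximation $a:A\to V_A$ of $A$ factors through $f$ as $a=hf$ for some $h:B\to V_A$. Applying (R4) to the three triangles built from $f$, $h$ and $a$, with the rotated torsion triangle $A\xrightarrow{a}V_A\to\Sigma U_A\to\Sigma A$ as the third, should produce a right triangle $C_f\to\Sigma U_A\to C_h\to\Sigma C_f$ together with a commuting relation showing that the connecting morphism $\delta:C_f\to\Sigma A$ of the cone of $f$ factors (up to sign) as $C_f\to\Sigma U_A\to\Sigma A$. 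Under $(a)$, $\Sigma U_A\in\U$, so for any $\psi:\Sigma A\to V$ with $V\in\V$ the composition $\Sigma U_A\to\Sigma A\xrightarrow{\psi}V$ lies in $\R(\Sigma U_A,V)=0$ and hence $\psi\delta=0$. Applying $\R(-,V)$ to the right triangle $\Sigma A\xrightarrow{\Sigma f}\Sigma B\to\Sigma C_f\to\Sigma^2 A$, the long exact sequence then identifies the image of $\R(\Sigma f,V)$ with the kernel of $\R(\delta,V)$, which is all of $\R(\Sigma A,V)$; so $\R(\Sigma f,V)$ is surjective for every $V\in\V$, i.e.\ $\Sigma f$ is $\V$-monic.

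The hard part will be this octahedral step, where one must track the connecting morphisms of the three input triangles carefully to read off the factorisation of $\delta$ through $\Sigma U_A$. The remaining implications reduce to Remark~\ref{rem:torsion properties}(b), the fully-faithfulness of $\Sigma$, and the extension-closure of $\Sigma\R$.
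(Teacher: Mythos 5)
Your proposal is correct, but one leg of it takes genuinely different machinery from the paper, so it is worth comparing. The paper proves the cycle (a)$\Rightarrow$(b)$\Rightarrow$(c)$\Rightarrow$(d)$\Rightarrow$(a): its (a)$\Rightarrow$(b) is your argument phrased through the torsion triangle of $Y$, its (d)$\Rightarrow$(a) coincides with yours, and its key step (c)$\Rightarrow$(d) is more elementary than your (a)$\Rightarrow$(d): it takes the torsion triangles of $A$ and of $\Sigma A$, factors $\Sigma u$ through $u'$ using (c), and applies (R3) to produce $c$ with $v'=c(\Sigma g)(\Sigma f)$, so that every map $\Sigma A\to \V$ factors through $\Sigma f$ via the left $\V$-approximation $v'$ --- no octahedron and no long exact sequence. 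You instead treat (a) as a hub: (a)$\Leftrightarrow$(c) is immediate from Remark~\ref{rem:torsion properties}(b) (a shortcut the paper only obtains around the cycle); your (b)$\Rightarrow$(a) rotates the torsion triangle of $\Sigma U$ and invokes extension-closure of $\Sigma\R$, a clean and correct use of the right semi-equivalence hypothesis which the paper's terse ``(b)$\Rightarrow$(c)'' step does not make explicit; and your (a)$\Rightarrow$(d) goes through (R4). That octahedral step does close: applying (R4) to $f$, $h$ and $hf=a$, with the rotated torsion triangle $A\xrightarrow{a}V_A\xrightarrow{w}\Sigma U_A\xrightarrow{-\Sigma u_A}\Sigma A$ as the triangle over the composite, the commutativity built into the (R4) diagram gives precisely $\delta=(-\Sigma u_A)\circ\alpha$, so $\delta$ factors through $\Sigma U_A\in\U$; and the exactness you then need, $\mathrm{im}\,\R(\Sigma f,V)=\ker \R(\delta,V)$, is the weak-cokernel property of the rotated triangle $C_f\xrightarrow{\delta}\Sigma A\xrightarrow{-\Sigma f}\Sigma B$, which does hold for the contravariant Hom functor in a right triangulated category (it is the covariant direction that fails in general, so you are on the safe side). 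The trade-off: the paper's proof is shorter and needs only (R3) and the two torsion triangles, while yours uses heavier axioms but makes the roles of the semi-equivalence and of the cone of $f$ more transparent.
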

\begin{proof}
\textbf{(a) $\Rightarrow$ (b):} 
Suppose that $\Sigma \U \subseteq \U$ and let $Y \in \Sigma^{-1}\V$. Then there is a  right triangle
\[ \begin{tikzcd} U \arrow[r, "u"] & Y \arrow[r] & V \arrow[r] &{} \end{tikzcd} \] with $U \in \U$ and $V \in \V$. Consider the morphism $\Sigma u : \Sigma U \to \Sigma Y$. By assumption, $\Sigma U \in \U$ and $\Sigma Y \in \V$; thus, $\Sigma u =0$. Since $\Sigma$ is faithful, $u =0$ and we deduce that $Y \cong V \in \V$. 

\textbf{(b)$\Rightarrow$ (c):} Follows from the fact that $\R(\U, \V)=0$ and $\Sigma$ is fully faithful. 

\textbf{(c)$\Rightarrow$(d):} Let $f:A \to B$ be a $\V$-monic morphism. There are right triangles 
\begin{align*}
&\begin{tikzcd}[ampersand replacement =\&] U \arrow[r, "u"] \& A \arrow[r, "v"] \& V \arrow[r] \&{\Sigma U} \end{tikzcd} \\
 \intertext{and} 
 &\begin{tikzcd}[ampersand replacement =\&] U' \arrow[r, "u'"] \& \Sigma A \arrow[r, "v'"] \& V' \arrow[r] \&{\Sigma U'} \end{tikzcd}
\end{align*} 
 with $U, U' \in \U$ and $V, V' \in V$. Then, as $f$ is $\V$-monic, there exists $g:B \to V$ such that $gf=v$. Since $\R(\Sigma \U, \V) =0$ there exists $a: \Sigma U \to U'$ such that $\Sigma u = u' a$. Thus, by the axiom (R3) = (ET3) there exists $c: \Sigma V \to V'$ such that $c (\Sigma v )= v'$ 
\[ \begin{tikzcd}\Sigma  U \arrow[r, "\Sigma u"] \arrow[d, "\exists a"]& \Sigma A \arrow[d, equal] \arrow[r, "\Sigma v"] & \Sigma V \arrow[d, "\exists c"] \arrow[r] &{{\Sigma^2 U}} \\ U' \arrow[r, "u'"] & \Sigma A \arrow[r, "v'"] & V' \arrow[r] &{{\Sigma U'}.} \end{tikzcd} \]
Thus 
\[ v' = c (\Sigma v) = c \Sigma (gf) = c (\Sigma g)(\Sigma f ).\] This finishes the proof since $v'$ is a left $\V$-approximation of $\Sigma A$ and so all morphisms from $\Sigma A$ to $\V$ factor through $v'$. 

\textbf{(d)$\Rightarrow$(a).} Let $U \in \U$. Then, since $\R(U, \V)=0$ we have that $U \to 0$ is $\V$-monic.  Thus, by assumption $\Sigma U \to 0$ is also $\V$-monic from which we deduce that $\Sigma U \in \U$. 

\end{proof}
 
 The next lemma justifies why we will look to describe $\R$ as a (co-)aisle in $\SR$.

\begin{lem} \label{lem:aisleinstab}
Let $\mathcal{T}$ be a triangulated category, $(\mathcal{V}, \mathcal{W})$ (resp.  $(\mathcal{U}, \mathcal{V})$) be a t-structure (resp. co-t-structure) in $\mathcal{T}$ and $\mathcal{S}:= \mathsf{cosusp}_\mathcal{T}\mathcal{V} \cong \mathcal{S}(\V)$. Then $(\mathcal{V}, \mathcal{W} \cap \mathcal{S})$ (resp. $(\mathcal{U} \cap \mathcal{S}, \mathcal{V})$) is a t-structure (resp. co-t-structure) in $\mathcal{T}$. 
\end{lem}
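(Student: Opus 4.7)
The statement is really two parallel claims, so I would prove the t-structure case in detail and then invoke symmetry (essentially reversing the roles of shifts) for the co-t-structure case. The stabilisation $\S \cong \mathsf{cosusp}_\T(\V)$ is, by construction, the smallest strictly full subcategory of $\T$ containing $\V$ that is closed under $\Sigma^{-1}$ and under extensions; in the t-structure case, since $\Sigma \V \subseteq \V \subseteq \S$, this $\S$ is a triangulated subcategory of $\T$ on which $\Sigma$ restricts.  I will assume the intended conclusion is that $(\V, \W \cap \S)$ is a t-structure \emph{in $\S$} (and dually for the co-t-structure), since this is the way the lemma is invoked later.

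The plan is to verify the three defining properties of a t-structure in $\S$. For orthogonality, $\S(\V, \W \cap \S)$ is a subgroup of $\T(\V, \W) = 0$. For the shift-closure conditions, $\Sigma \V \subseteq \V$ is inherited from the original t-structure, while $\Sigma^{-1}(\W \cap \S) \subseteq \W \cap \S$ follows by intersecting the inclusion $\Sigma^{-1}\W \subseteq \W$ (from the t-structure) with the inclusion $\Sigma^{-1}\S \subseteq \S$ (by construction of $\S$ as a cosuspension).

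The key step is producing approximation triangles in $\S$. Given $X \in \S \subseteq \T$, the t-structure $(\V, \W)$ supplies a triangle
\[ \begin{tikzcd} V \ar[r] & X \ar[r] & W \ar[r] & \Sigma V \end{tikzcd} \]
with $V \in \V$ and $W \in \W$, and I only need to argue $W \in \S$. Rotating yields the triangle $X \to W \to \Sigma V \to \Sigma X$; since $X \in \S$ by hypothesis and $\Sigma V \in \V \subseteq \S$ (using $\Sigma\V \subseteq \V$), the extension-closure of $\S$ forces $W \in \S$, so $W \in \W \cap \S$.

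The co-t-structure case proceeds by the completely analogous argument: given $X \in \S$, the triangle $U \to X \to V \to \Sigma U$ with $U \in \U$, $V \in \V$ shows $U$ sits (after rotating to $\Sigma^{-1} V \to U \to X \to V$) as the middle term of a triangle whose outer terms $\Sigma^{-1}V$ and $X$ both lie in $\S$ (using $\Sigma^{-1}\S \subseteq \S$), so $U \in \U \cap \S$; the shift-closure $\Sigma^{-1}(\U \cap \S) \subseteq \U \cap \S$ is immediate. The only genuine point to watch is the rotation-and-extension argument described above; everything else is routine bookkeeping.
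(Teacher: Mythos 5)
Your proposal is correct and follows essentially the same route as the paper: orthogonality and shift-closure are immediate, and the only real content is that for $X \in \S$ the truncation triangle from the ambient (co-)t-structure has its other term in $\S$, which you obtain by rotating and using extension-closure of $\S$ — exactly the paper's argument $W \in \S \ast \Sigma\V \subseteq \S$ (the co-t-structure case being dual). Your reading that the conclusion is a (co-)t-structure in $\S$ rather than in $\T$ also matches what the paper actually proves.
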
 
\begin{proof}
We prove the t-structure case, whence statement for co-t-structures will follow dually. Let $(\mathcal{V}, \mathcal{W})$ be a t-structure in $\T$ and set $\mathcal{S}:= \mathsf{cosusp}_\mathcal{T}\mathcal{V} \cong \mathcal{S}(\V)$. 
Clearly, $\mathcal{S}(\mathcal{V}, \mathcal{W} \cap \mathcal{S})=0$ and $\Sigma \mathcal{V} \subset \mathcal{V}$. Thus it remains to show that $\mathcal{S} = \mathcal{V} \ast (\mathcal{W} \cap \mathcal{S})$. Since $\mathcal{V}, (\mathcal{W} \cap \mathcal{S}) \subset \mathcal{S}$ and $\mathcal{S}$ is closed under extensions, $ \mathcal{V} \ast (\mathcal{W} \cap \mathcal{S}) \subset \mathcal{S}$.
 To show the converse, let $A \in \mathcal{S}$ then there exists a triangle in $\mathcal{T}$ \[ \begin{tikzcd} V \arrow[r] & A \arrow[r] & W \arrow[r] & \Sigma V \end{tikzcd} \] with $V \in \mathcal{V}$ and $W \in \mathcal{W}$. Thus $W \in \mathcal{S} \ast \Sigma \mathcal{V} \subset \mathcal{S} \ast \mathcal{S} \subset \mathcal{S}$. 
\end{proof}

We also note that the boundedness of right triangulated categories (Lemma \ref{lem:bounded}) relates to the boundedness of (co-)t-structures.

\begin{lem} Suppose that $\R$ is the co-aisle of a co-t-structure, $(\U, \R)$ in $\SR$. If $\R$ is bounded then the co-t-structure $(\U, \R)$ is bounded. A dual statement holds for a t-structure $(\R, \V)$ in $\SR$. 
\end{lem}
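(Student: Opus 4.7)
The plan is to verify the two equalities $\SR = \bigcup_{m \in \mathbb{Z}} \Sigma^m \R$ and $\SR = \bigcup_{n \in \mathbb{Z}} \Sigma^n \U$ that together constitute the boundedness of the co-t-structure $(\U, \R)$. The first is immediate from the construction of $\SR$: every object has the form $(A, k) \cong \Sigma^k s(A)$ for some $A \in \R$ and $k \in \mathbb{Z}$, and therefore lies in $\Sigma^k \R$.

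For the second equality, I would fix $X \in \SR$ and use the co-t-structure to produce a triangle $U \to X \to V \to \Sigma U$ in $\SR$ with $U \in \U$ and $V \in \R$. Since $\Sigma^{-1}\U \subseteq \U$ we have $\U \subseteq \Sigma^n \U$ for all $n \geq 0$, and each $\Sigma^n\U$ is closed under extensions (as $\U$ is and $\Sigma$ is an autoequivalence); hence it suffices to find $n \geq 0$ with $V \in \Sigma^n \U$. Using the orthogonal description $\U = \{ A \in \SR \mid \SR(A, \R) = 0\}$ together with the fully faithful embedding $\R \hookrightarrow \SR$, this reduces to finding $n$ such that $\R(V, \Sigma^n R) = 0$ for every $R \in \R$ simultaneously.

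The main obstacle is passing from the pointwise vanishing provided by the bounded hypothesis on $\R$ (lifted to $\SR$ by Lemma~\ref{lem:bounded}) to the required uniform vanishing in $R$. The strategy is to exploit the extra structure guaranteed by Theorem~\ref{intthm:coaisle}: the torsion pair $(\mathsf{Proj}_{\EE}\R, \Sigma\R)$ in $\R$ expresses each $R \in \R$ as an extriangle $P \to R \to \Sigma R' \dashrightarrow$ with $P \in \mathsf{Proj}_{\EE}\R$ and $R' \in \R$, and iterating gives a filtration of $R$ by shifts of projectives. Applying $\R(V, -)$ and the long exact sequence of Lemma~\ref{lem:extriles} reduces the target vanishing to one against $\mathsf{Proj}_{\EE}\R$; since $\EE$-projectives satisfy $\R(P, \Sigma^j R) = 0$ for all $j \geq 1$ and $R \in \R$, the boundedness assumption on $\R$ then furnishes the required uniform $n$.

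The dual statement for a t-structure $(\R, \V)$ in $\SR$ is proved symmetrically: one checks $\SR = \bigcup_m \Sigma^m \R$ as above, and for $\SR = \bigcup_m \Sigma^m \V$ uses the torsion pair $(\Sigma\R, \mathcal{F})$ in $\R$ provided by Theorem~\ref{intthm:aisle}, in place of $(\mathsf{Proj}_{\EE}\R, \Sigma\R)$, together with the same boundedness input.
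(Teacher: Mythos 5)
Your overall skeleton is the same as the paper's: the equality $\SR=\bigcup_m\Sigma^m\R$ is read off from the construction of $\SR$, and the second equality is reduced, via the orthogonality description of the aisle from Remark \ref{rem:torsion properties}, to a Hom-vanishing statement to be extracted from boundedness; your reduction through the truncation triangle and the increasing, extension-closed chain $\U\subseteq\Sigma\U\subseteq\Sigma^2\U\subseteq\cdots$ is correct and in fact spelled out more carefully than in the paper. You are also right that what is ultimately needed is $\R(V,\Sigma^nR)=0$ for \emph{all} $R\in\R$ at a single $n$, whereas boundedness and Lemma \ref{lem:bounded} are stated pair by pair; the paper does not take your detour at all --- it asserts the classwise vanishing directly from Lemma \ref{lem:bounded} and concludes membership in the relevant orthogonal via Remark \ref{rem:torsion properties}.

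The problem is that your proposed resolution of the obstacle you identified does not work. First, the variance is wrong: $\EE$-projectivity of $P$ says $\R(P,\Sigma^j-)=0$, i.e.\ it kills maps \emph{out of} $P$, while after applying $\R(V,-)$ to your filtration of the target you need vanishing of $\R(V,\Sigma^{n+i}P)$, i.e.\ maps \emph{into} shifts of $P$; nothing forces these to vanish, and since each $\Sigma^{n+i}P$ is again an object of $\R$, this is a statement of exactly the same shape as the one you set out to prove, now required uniformly over the whole class $\mathsf{Proj}_{\EE}\R$. Second, iterating the torsion pair $(\mathsf{Proj}_{\EE}\R,\Sigma\R)$ only gives $R\in\mathsf{Proj}_{\EE}\R\ast\Sigma\mathsf{Proj}_{\EE}\R\ast\cdots\ast\Sigma^{j-1}\mathsf{Proj}_{\EE}\R\ast\Sigma^{j}\R$, and the $\Sigma^{j}\R$-tail never disappears, so the long-exact-sequence reduction does not terminate; arranging that every object admits a finite such filtration is essentially equivalent to the boundedness of the co-t-structure, which is what is being proved. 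So the key step --- producing the uniform $n$ --- is not established, and the same criticism applies verbatim to your sketch of the dual statement, where the torsion pair $(\Sigma\R,\mathcal{F})$ is invoked in the same way.
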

\begin{proof} Let $(\U, \R)$ be a co-t-structure in  $\SR$.
By construction of $\S(\R)$, the equality $\S(\R) = \bigcup_{n \in \mathbb{Z}} \Sigma^n \R$ holds. It remains to verify that $\S (\R) = \bigcup_{m \in \mathbb{Z}} \Sigma^m \U $. Observe that since $\R$ is bounded, in light of Lemma \ref{lem:bounded}, for all $X \in \SR$ we have that $\SR(\R, \Sigma^i X) =0$ for $i<<0$. Thus, by Remark~\ref{rem:torsion properties}, $\Sigma^iX \in \V$ for $i<<0$; whence the claim follows. 
\end{proof}

Before we give our characterisations of $\R$ as the co-aisle of a co-t-structure, we make some comments. 
 Note, by Proposition \ref{funcyisom}, the $\EE$-projectives of $\R$ are the  precisely the objects $P \in \R$ satisfying $\R (P, \Sigma -) = 0$.  Recall from \cite{Aihara2012} that a subcategory $\mathcal{X} = \mathsf{add}\mathcal{X}$ of a triangulated category $\mathcal{T}$ is \textit{silting} if $\mathcal{T}(\mathcal{X}, \Sigma^{>0} \mathcal{X}) = 0$ and $\mathcal{T} = \mathsf{thick}\mathcal{X}$, the smallest triangulated subcategory of $\mathcal{T}$ containing $\mathcal{X}$ that is closed under direct summands. 
We are now ready to give characterisations of $\R$ as the co-aisle of a co-t-structure in $\SR$ in terms of torsion pairs, $\EE$-projectives of $\R$ and silting subcategories. 

\begin{thm} \label{thm:coaisles}
The following are equivalent
\be 
\item  $\R$ is the co-aisle of a co-t-structure $(\U, \R)$  in $\S(\R)$;
\item $\R$ has enough $\EE$-projectives;
\item There is a torsion pair $(\mathsf{Proj}_{\EE} \R, \Sigma \R)$ in $\R$.
\ee
Moreover, if $\R$ is bounded then the above conditions are also equivalent to \be \item[(d)] $\mathsf{Proj}_{\EE}\R$ is a silting subcategory of $\S(\R)$. \ee
\end{thm}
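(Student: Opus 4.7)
My plan is to establish the cycle (b) $\Leftrightarrow$ (c) and (a) $\Leftrightarrow$ (b), and then treat (a) $\Leftrightarrow$ (d) under boundedness separately. Throughout, two tools are central: the identification of the extriangulated structure on $\R$ as $\EE(C,A) \cong \SR(C, \Sigma A)$ from Theorem \ref{thm:nicerighttriextri}, and the fact that $\R$ sits extension-closed in $\SR$ by Lemma \ref{lem: T extn closed}, so that extriangles in $\R$ are precisely triangles in $\SR$ all of whose terms lie in $\R$.

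For (b) $\Leftrightarrow$ (c), the key is rotation. Given a projective resolution $A \to P \to C \dashrightarrow$ with $P \in \mathsf{Proj}_\EE \R$, axiom (R2) yields the right triangle $P \to C \to \Sigma A \to \Sigma P$, so $P \to C \to \Sigma A \dashrightarrow$ is the desired torsion-pair extriangle. Conversely, given $P \to C \to \Sigma A \dashrightarrow$, backward rotation in $\SR$ produces a triangle $A \to P \to C \to \Sigma A$ whose four terms all lie in $\R$, hence an extriangle in $\R$ by extension-closedness. The Hom-vanishing $\R(\mathsf{Proj}_\EE \R, \Sigma \R) = 0$ is a direct translation of $\EE$-projectivity.

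For (a) $\Rightarrow$ (b), given $(\U, \R)$ a co-t-structure in $\SR$ and $C \in \R$, I apply the torsion-pair decomposition in $\SR$ to $\Sigma^{-1}C$, obtaining $U \to \Sigma^{-1}C \to V \to \Sigma U$ with $U \in \U$, $V \in \R$. Shifting by $\Sigma$ and rotating backward yields the triangle $V \to \Sigma U \to C \to \Sigma V$ in $\SR$. Since $V, C \in \R$, extension-closedness forces $\Sigma U \in \R$, and the identity $\R(\Sigma U, \Sigma X) = \SR(U, X) = 0$ for $X \in \R$ shows $\Sigma U \in \mathsf{Proj}_\EE \R$, delivering the extriangle $V \to \Sigma U \to C \dashrightarrow$ in $\R$. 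For (b) $\Rightarrow$ (a), I set $\U := \{X \in \SR \mid \SR(X, \R) = 0\}$; it is automatic that $\SR(\U, \R) = 0$, that $\U$ is extension-closed, and that $\Sigma^{-1}\U \subseteq \U$ (using $\Sigma \R \subseteq \R$). Every $Y \in \SR$ is isomorphic to $\Sigma^{-n}A$ for some $A \in \R$ and $n \geq 0$, so I build the torsion-pair decomposition by induction on $n$: the base $n=0$ is trivial, and the inductive step combines the $\Sigma^{-n}$-shift of a projective resolution $X \to P \to A \dashrightarrow$ with the inductive decomposition of $\Sigma^{-(n-1)}X$ via the octahedral axiom in $\SR$. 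Since $\Sigma^{-n}P \in \U$ for any projective $P$ and $n \geq 1$, and $\U$ is extension-closed, the stitched left-hand term lies in $\U$ as required.

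Finally, under the boundedness hypothesis the preceding lemma gives that the co-t-structure in (a) is itself bounded, so I invoke the Mendoza--Hernandez bijection \cite{MendozaHernandez2013} between bounded co-t-structures in a triangulated category and silting subcategories (via the co-heart). A direct computation identifies $\Sigma(\U \cap \Sigma^{-1}\R)$ with $\mathsf{Proj}_\EE \R$, using $\R(\Sigma M, \Sigma X) = \SR(M, X)$; since silting subcategories are stable under shifts, this establishes (a) $\Leftrightarrow$ (d). The main obstacle is the induction in (b) $\Rightarrow$ (a): orchestrating the iterated octahedra across negative shifts and verifying at each step that the resulting extension really lies in the Hom-perpendicular $\U$ requires care, and checking that the Mendoza--Hernandez co-t-structure in (d) $\Rightarrow$ (a) has co-aisle exactly $\R$ (rather than merely containing it) is likewise the technical heart of the last step.
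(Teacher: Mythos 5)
Your proposal is correct and follows essentially the same route as the paper: the same rotation argument linking (b) and (c), the same construction of $\U$ as the Hom-perpendicular of $\R$ with an induction on negative shifts via the octahedral axiom (the paper runs it from (c), you from (b) --- the same triangle up to rotation), and the same appeal to the boundedness lemma and the Mendoza--Hern\'andez co-heart/silting correspondence for (d). Your shift-corrected identification of the co-heart as $\Sigma^{-1}\mathsf{Proj}_{\EE}\R$ is accurate, and the co-aisle verification you flag in (d)$\Rightarrow$(a) is left at the same level of detail in the paper itself.
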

\begin{proof}
\textbf{(a) $\Rightarrow$ (b):} Suppose there is a co-t-structure $(\U, \R)$ in $\SR$. 
We must show that for all $X \in \R$ there is a right triangle 
\[ R \to P \to X \to \Sigma R  \] in $\R$ with $P \in \mathsf{Proj}\R$. Since $(\U, \R)$ is a co-t-structure in $\SR$ there is a triangle in $\SR$
\[ U \to \Sigma^{-1} X \to R \to \Sigma \R\] with $U \in \U$, $R \in \R$. By rotating we obtain the triangle
\[ R \to \Sigma U \to X \to \Sigma R.\] We claim that  $\Sigma U \in \mathsf{Proj}_{\EE}\R$. The fact that  $\Sigma U \in \R$ follows from this triangle 
 since $\R$ is closed under extensions in $\SR$ by Lemma \ref{lem: T extn closed}.
It remains to verify  that $\Sigma U$ is $\EE$-projective in $\R$, that is $\R(\Sigma U, \Sigma \R)=0$:
\[ 0 = \SR( U, \R) \cong \SR (\Sigma U , \Sigma \R) \cong \R( \Sigma U, \Sigma \R)\] where we have the first equality from the properties of co-t-structures.

\textbf{(b) $\Rightarrow$ (c):} Let $X \in \R$. By assumption there is a right triangle $R \to P \to X \to \Sigma \R$ in $\R$ with $P \in \mathsf{Proj}_{\EE}\R$. We rotate this to the right triangle \[ P \to X \to \Sigma R \to \Sigma P\] whence the claim follows as $\R(P, \Sigma \R)=0$.

\textbf{(c) $\Rightarrow$ (a):} We use a similar argument to \cite[Theorem 3.11]{MendozaHernandez2013}. Suppose that there is a torsion pair $(\mathsf{Proj}_{\EE}\R, \Sigma \R)$ in $\R$ and set $\PP := \mathsf{Proj}_{\EE}\R$ and $\U := \{Y \in \SR \mid \SR(Y, \R) =0\}$. Let $0\neq X \in \SR$, we must show that there is a triangle 
\[ U \to X \to R \to \Sigma U\] in $\SR$ with $U \in \U$ and $R \in \R$. By the definition of $\SR$ there exists $A \in \R \setminus \Sigma \R$ and $n \in \mathbb{Z}$ such that $X \cong \Sigma^n A $. If $n \geq 0$ then $X \in \R$ and we take the triangle $0 \to X \to X \to 0$. For $n < 0$ we proceed by induction.  By assumption, there is a right triangle in $\R$ 
\[ P \to A \to \Sigma R \to \Sigma P \] with $P \in \PP$ and $R \in \R$. This rotates to a triangle 
\[ \Sigma^n P \to \Sigma^n A \to \Sigma^{n+1} R \to \Sigma^{n+1} P\] in $\SR$. By the induction hypothesis, there is a triangle in $\SR$ 
\[ U \to \Sigma^{n+1} R \to V \to \Sigma U\] with $U \in \U$ and $V \in \R$. We apply (ET4)$^{\text{op}}$ to these triangles 
\begin{equation} \label{diagram:cot proof}
\begin{tikzcd}
\Sigma^n P \arrow[r] \arrow[d, equal] & E \arrow[r] \arrow[d]                      & U \arrow[d] \arrow[r]              & \Sigma^{n+1} P \arrow[d, equal] \\
\Sigma^n P \arrow[r]                                & \Sigma^{n} A \arrow[d] \arrow[r]           & \Sigma^{n+1} R \arrow[d] \arrow[r] & \Sigma^{n+1} P           \\
                                                    & V \arrow[r, equal] \arrow[d] & V \arrow[d]                        &                          \\
                                                    & \Sigma E \arrow[r]                         & \Sigma U                           &                         
\end{tikzcd}
\end{equation}
and claim that the triangle $E \to \Sigma^n A \to V \to \Sigma E$ satisfies the required conditions. Since $V \in \R$ by construction, we only have to show that $E \in \U$. Observe that $\Sigma^n P$ is in $\U$: 
\[ \SR( \Sigma^n P, \R) \cong \SR( P, \Sigma^{-n} \R) \cong \R ( P, \Sigma^{-n} \R ) =0\] since $\R(P, \Sigma \R) =0$ and $n < 0$. Thus the top row of the Diagram \ref{diagram:cot proof} shows that $E \in \U$ since $\U$ is closed under extensions. 

\textbf{(d)$\Leftrightarrow$(a):} Since $\R$ is bounded, by Lemma \ref{lem:bounded} the co-t-structure $(\U, \V)$ is bounded in $\SR$. Observe that $\mathsf{Proj}_{\EE}\R$ is the co-heart of this co-t-structure. The claim then follows from \cite[Corollary 5.9]{MendozaHernandez2013} where it was shown that a subcategory of a triangulated category is silting precisely when it is the co-heart of a bounded co-t-structure. 
\end{proof}

As a consequence, we obtain the following. 

\begin{cor} \label{cor:siltingcorrespondence} There is a correspondence between silting subcategories of triangulated categories and bounded right triangulated categories with right semi-equivalence that have enough projectives. \end{cor}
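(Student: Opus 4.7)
The plan is to construct mutually inverse assignments using Theorem \ref{thm:coaisles} as the main ingredient. In the forward direction, a bounded right triangulated category $\R$ with right semi-equivalence and enough $\EE$-projectives is sent to the pair $(\S(\R), \mathsf{Proj}_{\EE}\R)$, which is a silting subcategory of a triangulated category by Theorem \ref{thm:coaisles}(a)$\Leftrightarrow$(d). In the reverse direction, given a silting subcategory $\mathcal{M}$ of a triangulated category $\mathcal{T}$, I invoke \cite[Corollary 5.9]{MendozaHernandez2013} to obtain the unique bounded co-t-structure $(\U, \V)$ in $\mathcal{T}$ with co-heart $\mathcal{M}$, and assign to $(\mathcal{T}, \mathcal{M})$ the right triangulated category $\V$.

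Before verifying the two round-trips, I need to check that $\V$ lies in the intended source class. It is right triangulated with right semi-equivalence since $\V$ is closed under positive shifts and extensions in $\mathcal{T}$; it has enough $\EE$-projectives with $\mathsf{Proj}_{\EE}\V = \mathcal{M}$ by Theorem \ref{thm:coaisles}(a)$\Rightarrow$(b); and it is bounded, because for $A, B \in \V$ boundedness of the co-t-structure supplies $N$ with $\Sigma^{-i} A \in \U$ for all $i \geq N$, whence orthogonality yields $\V(A, \Sigma^i B) \cong \mathcal{T}(\Sigma^{-i} A, B) = 0$ for $i \gg 0$.

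For the round-trips, starting from $\R$ the composite returns $\R$ immediately, since Theorem \ref{thm:coaisles}(d)$\Rightarrow$(a) places $\R$ as the co-aisle of the bounded co-t-structure in $\S(\R)$ with co-heart $\mathsf{Proj}_{\EE}\R$. Starting from $(\mathcal{T}, \mathcal{M})$ the composite returns $(\S(\V), \mathsf{Proj}_{\EE}\V) = (\S(\V), \mathcal{M})$, and the identification $\S(\V) \cong \mathcal{T}$ follows from boundedness: $\mathcal{T} = \bigcup_{n} \Sigma^{-n} \V$ exhibits $\mathcal{T}$ as the smallest triangulated subcategory of itself containing $\V$, which by the universal property is precisely $\S(\V)$. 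The main obstacle is likely the boundedness check for $\V$ together with this identification of $\S(\V)$ with $\mathcal{T}$, but both reduce to standard consequences of bounded co-t-structures combined with the construction of the stabilisation.
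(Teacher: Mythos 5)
Your proposal is correct and follows essentially the same route as the paper: the corollary is stated there as a direct consequence of Theorem \ref{thm:coaisles}, with the forward map $\R \mapsto (\S(\R), \mathsf{Proj}_{\EE}\R)$ and the inverse obtained from the bijection of \cite[Corollary 5.9]{MendozaHernandez2013} between silting subcategories and bounded co-t-structures, taking the co-aisle. Your write-up merely makes explicit the routine verifications (boundedness of the co-aisle, enough projectives via Lemma \ref{lem:aisleinstab} and Theorem \ref{thm:coaisles}(a)$\Rightarrow$(b), and $\S(\V) \cong \mathcal{T} = \mathsf{cosusp}_\mathcal{T}\V$) that the paper leaves implicit.
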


The characterisations of Theorem \ref{thm:coaisles} adds to the numerous interpretations of silting subcategories in triangulated categories which are surveyed in \cite{AngeleriHuegel2019a}.

We now present our characterisations of aisles of t-structures. 

\begin{thm} \label{thm:aisles}
The following are equivalent
\be 
\item $\R$ is the aisle of a t-structure $(\R, \V)$  in $\S(\R)$;
\item There is a torsion pair $(\Sigma \R, \mathcal{F})$ in $\R$;
\item There is an equivalence of triangulated categories $\phi: \SR \to \C(\R)$  
\[\begin{tikzcd}[column sep=small] 
 & \R \arrow[dl, "s"'] &  \\
\SR \arrow[rr, "\phi", "\cong"']  & & \C(\R) \arrow[ul, "c"'] \end{tikzcd} \] such that $c\phi s = 1_\R$. 
\ee
\end{thm}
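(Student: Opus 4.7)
The plan is to show (a) $\Leftrightarrow$ (b) directly by manipulating torsion pairs and truncation triangles, and then to establish (a) $\Leftrightarrow$ (c) using the universal property of the costabilisation $\C(\R)$.

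For (a) $\Rightarrow$ (b), given a t-structure $(\R, \V)$ in $\SR$, I set $\mathcal{F} := \R \cap \Sigma \V$. The orthogonality $\R(\Sigma \R, \mathcal{F}) \subseteq \SR(\Sigma \R, \Sigma \V) \cong \SR(\R, \V) = 0$ is automatic. For the approximation axiom, given $X \in \R$ I apply the t-structure decomposition to $\Sigma^{-1} X \in \SR$ and then apply the autoequivalence $\Sigma$, obtaining a triangle $\Sigma R \to X \to \Sigma V \to \Sigma^2 R$ in $\SR$. Rotating this to $X \to \Sigma V \to \Sigma^2 R \to \Sigma X$ and invoking Lemma \ref{lem: T extn closed} (extension-closedness of $\R$ in $\SR$), one concludes $\Sigma V \in \R$, hence $\Sigma V \in \mathcal{F}$, yielding the required right triangle in $\R$.

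For (b) $\Rightarrow$ (a), define $\V := \{V \in \SR \mid \SR(\R, V) = 0\}$; this is extension-closed by a long-exact-sequence argument. The main task is to show every $X \in \SR$ fits into a triangle $R \to X \to V \to \Sigma R$ with $R \in \R$ and $V \in \V$. Writing $X \cong \Sigma^{-n} A$ for some $A \in \R$ and $n \geq 0$, I induct on $n$. The base case $n = 0$ is trivial. For $n \geq 1$, the torsion pair applied to $A$ yields a right triangle $\Sigma R' \to A \to F \to \Sigma^2 R'$ in $\R$, which desuspends in $\SR$ to $\Sigma^{1-n} R' \to X \to \Sigma^{-n} F \to \Sigma^{2-n} R'$. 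For $n = 1$ this is already the required decomposition since $\R(\Sigma \R, F) = 0$ forces $\Sigma^{-1} F \in \V$; for $n \geq 2$, I combine this with the inductive hypothesis applied to $\Sigma^{1-n} R'$ using the octahedral axiom. The key verification is that $\Sigma^{-k} F \in \V$ for every $k \geq 1$, which follows from the inclusion $\Sigma^k \R \subseteq \Sigma \R$ together with the torsion-pair orthogonality.

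For (a) $\Leftrightarrow$ (c), the main obstacle is handling the interplay between the universal properties of $\SR$ and $\C(\R)$. Assuming (a), the aisle truncation $\tau: \SR \to \R$ is a right triangle functor from a triangulated category, so the universal property of $\C(\R)$ yields a unique triangle functor $\phi: \SR \to \C(\R)$ with $c\phi = \tau$, and $c\phi s = \tau s = 1_\R$ follows since $\tau|_\R = 1_\R$. The delicate step is showing $\phi$ is an equivalence: my plan is to verify directly that $(\SR, \tau)$ itself satisfies the universal property of the costabilisation of $\R$, namely that for any right triangle functor $G: \T \to \R$ from a triangulated $\T$, the assignment $G' := sG$ gives a factorisation $\tau G' = G$ whose uniqueness is forced by the explicit description of $\SR$ as generated from $\R$ under finite iterations of $\Sigma^{-1}$. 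For the converse (c) $\Rightarrow$ (a), setting $\tau := c\phi: \SR \to \R$ and $\V := \{X \in \SR \mid \tau X \cong 0\}$, one transports the canonical costabilisation structure along $\phi$ to obtain, for each $X \in \SR$, a morphism $s\tau X \to X$ whose cone lies in $\V$, giving the required t-structure triangles for $(\R, \V)$ in $\SR$.
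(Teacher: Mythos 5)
Your treatment of (a) $\Leftrightarrow$ (b) is correct and is essentially the dualisation of the proof of Theorem \ref{thm:coaisles} that the paper's proof invokes, so that part is fine. The problems are in (a) $\Leftrightarrow$ (c). For (a) $\Rightarrow$ (c) your opening step asserts that the aisle truncation $\tau\colon \SR \to \R$ is a right triangle functor and feeds it into the universal property of $\C(\R)$ to produce $\phi$. That step fails: a truncation functor of a t-structure neither commutes with the shift nor sends triangles to (right) triangles; for the standard aisle $D^{\leq 0}\subset D^b(k)$ one has $\tau^{\leq 0}(X[1]) \cong (\tau^{\leq 1}X)[1] \not\cong (\tau^{\leq 0}X)[1]$ in general. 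Since the universal property of the costabilisation only accepts right triangle functors $\T \to \R$, it cannot be applied to $\tau$. The paper does not construct $\phi$ this way at all: it proves directly that $(\SR,\tau)$ has the required factorisation property, with existence given by $F' = sF$ (as in your sketch) and uniqueness proved by shifting into the aisle, where $\tau$ acts as the identity: for $n\gg 0$ one has $\Sigma^n F''X \in \R$, hence $\Sigma^n F''X \cong \tau F''(\Sigma^n X) = F(\Sigma^n X)\cong \Sigma^n FX$. Your appeal to ``$\SR$ is generated from $\R$ under finite desuspensions'' is the right ingredient but does not by itself pin down $F''X$; it must be combined with $\tau|_{\R}\cong 1_{\R}$ as above.

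The more serious gap is in (c) $\Rightarrow$ (a). You set $\tau := c\phi$, $\V := \{X \in \SR \mid \tau X \cong 0\}$, and claim that ``transporting the canonical costabilisation structure along $\phi$'' yields, for each $X\in\SR$, a morphism $s\tau X \to X$ whose cone lies in $\V$. The costabilisation carries no such structure: its data consists only of the triangulated category $\C(\R)$, the right triangle functor $c$, and the factorisation property for functors into $\R$; there is no counit $sc\phi \Rightarrow 1_{\SR}$ and no truncation triangles to transport ($s$ is not part of the costabilisation data at all). Moreover, even granted such triangles, a t-structure also requires $\SR(\R,\V)=0$, i.e.\ $\SR(sA,X)=0$ whenever $c\phi X\cong 0$, and that is exactly the adjunction you have not established. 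The paper's proof supplies precisely this missing content: it shows by a direct computation of morphism spaces in $\SR$ (cases $m\geq 0$ and $m<0$, using $c\phi s = 1_{\R}$, full faithfulness of $\Sigma$ on $\R$, and the fact that $c\phi$ commutes with the shifts) that $\R(X, c\phi(Y,m)) \cong \SR(sX,(Y,m))$, so that $c\phi$ is right adjoint to $s$, and then concludes that $\R$ is an aisle by \cite[Proposition 1.2]{Keller1988}. An argument equivalent to this adjunction is what your (c) $\Rightarrow$ (a) is missing.
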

\begin{proof}
The equivalence of (a) and (b) follows from dualising the arguments used in the proof of Theorem~\ref{thm:coaisles}. 

\textbf{(a)$\Rightarrow$(c):} We show that in this case, $\SR$ satisfies the universal property of the co-stabilsation of $\R$. Let $r: \SR \to \R$ denote the functor induced by the t-structure $(\R, \V)$. Let $\T$ be a triangulated category and $F: \T \to \R$ be a right triangle functor. Define the functor $F': \T \to \SR$ by $F'X = (FX, 0)$. Clearly $rF' = F$ and $F'$ is a triangle functor. 

We quickly verify that $F'$ is unique with this property. Suppose that $F'': \T \to \R$ is another triangle functor such that $rF''=F$. Then for all $X \in \T$ there are triangles in $\SR$
\begin{align*}
rF'X \cong FX &\to F'X \to V' \to \Sigma FX\\
rF''X \cong FX &\to F''X \to V'' \to \Sigma FX
\end{align*}
 with $\SR(\R, V') = 0 = \SR (\R, V'')$. Observe that,  for $n>>0$, $\Sigma^n F'X, \Sigma^n F''X \in \R$. Thus, $\Sigma^n F'X \cong \Sigma^n FX \cong \Sigma^n F''X$.
 So, since $F'$ and $F''$ are triangle functors, $F''\Sigma^n X \cong F' \Sigma^n X$ and we are done. 

\textbf{(c)$\Rightarrow$(a):}  We will show that that $c\phi$ is right adjoint to the inclusion $s$ whence we will be done by \cite[Proposition 1.2]{Keller1988}. It is enough to show that for all $X, Y \in \R$ and $m \in \mathbb{Z}$ that there is a natural isomorphism 
\[ \R(X, c\phi (Y,m) ) \cong \SR(sX, (Y,m)). \]
If $m \geq 0$ then $(Y,m) = s(\Sigma^m Y)$ and thus, by assumption $c\phi(Y,m) = c\phi s (\Sigma^m Y) \cong \Sigma^m Y$. On the other hand
$$\SR(sX, (Y,m)) = \SR( (X,0), (Y,m)) \cong \R(X, \Sigma^m Y)$$ and we are done. 

If $m <0$, then $\SR( sX, (Y,m) ) \cong \R(\Sigma^{-m}X, Y)$. On the other hand, 
$
 \R(X, c \phi (Y,m))  \cong \R( \Sigma^{-m}X, \Sigma^{-m}c\phi (Y,m)) 
$ and $$ \Sigma^{-m}c\phi (Y,m) \cong c\phi \Sigma^{-m}(Y,m) = c \phi (Y,0) \cong c \phi s Y \cong Y$$ and we are done since all isomorphisms used are natural. 
\end{proof}

We end this section by noting that the language of the torsion pairs in $\R$ also allows us to describe related  classes of (co-)t-structures.

\begin{prop} \label{prop:intermediate} $\R = (\R, \Sigma, \Delta)$  be a right triangulated category with right semi-equivalence.
\be 
\item Suppose that there is a co-t-structure $(\U, \R)$ in $\SR$. Then there is a bijection 
\begin{align*}
\{ (\mathcal{X}, \mathcal{Y}) \text{ co-t-structure in } \SR  & \mid \mathcal{Y} \subseteq \R \}    \\ \longleftrightarrow & \{ (\mathcal{A}, \mathcal{B}) \text{ torsion pair in } \R \mid \Sigma \mathcal{B} \subseteq \mathcal{B} \subseteq \Sigma \R \} 
\\ (\X , \mathcal{Y})  \longmapsto & (\R \cap \Sigma \X, \Sigma \mathcal{Y}) 
\\ (\Sigma^{-1}(\U \ast \mathcal{A}), \Sigma^{-1} \mathcal{B}) \longmapsfrom & (\mathcal{A}, \mathcal{B}) 
\intertext{ which preserves the inclusion of aisles and restricts to a bijection }
\{ (\mathcal{X}, \mathcal{Y}) \text{ co-t-structure in } \SR   & \mid \Sigma \R \subseteq \mathcal{Y} \subseteq \R \}   \\ \longleftrightarrow &  \{ (\mathcal{A}, \mathcal{B}) \text{ torsion pair in } \R \mid \Sigma^2 \mathcal{R} \subseteq \mathcal{B} \subseteq \Sigma \R \}.
\end{align*}
\item  Suppose that there is a t-structure $(\R, \V)$ in $\SR$. Then there is a bijection 
\begin{align*}
\{ (\mathcal{X}, \mathcal{Y}) \text{ t-structure in } \SR  & \mid \mathcal{X} \subseteq \R \}    \\ \longleftrightarrow & \{ (\mathcal{A}, \mathcal{B}) \text{ torsion pair in } \R \mid \Sigma \mathcal{A} \subseteq \mathcal{A} \subseteq \Sigma \R \} 
\\ (\X , \mathcal{Y})  \longmapsto & (\Sigma \X, \R \cap \Sigma \mathcal{Y}) 
\\ (\Sigma^{-1} \mathcal{A}, \Sigma^{-1}(\V \ast \mathcal{B}) )\longmapsfrom & (\mathcal{A}, \mathcal{B}) 
\intertext{ which preserves the inclusion of aisles and restricts to a bijection }
\{ (\mathcal{X}, \mathcal{Y}) \text{ t-structure in } \SR   & \mid \Sigma \R \subseteq \mathcal{X} \subseteq \R \}   \\ \longleftrightarrow &  \{ (\mathcal{A}, \mathcal{B}) \text{ torsion pair in } \R \mid \Sigma^2 \mathcal{R} \subseteq \mathcal{A} \subseteq \Sigma \R \}. \end{align*}
\ee
\end{prop}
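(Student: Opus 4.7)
The plan is to prove part (a); part (b) will follow by a formally dual argument. I would verify that both maps are well-defined, that their compositions are identities, and that they preserve aisle inclusions; the restricted bijection then drops out immediately from the equivalence $\Sigma \R \subseteq \mathcal{Y} \iff \Sigma^{2}\R \subseteq \Sigma \mathcal{Y} = \mathcal{B}$.

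For the forward map, given $(\X, \mathcal{Y})$ with $\mathcal{Y} \subseteq \R$, the conditions $\Sigma^{2}\mathcal{Y} \subseteq \Sigma \mathcal{Y} \subseteq \Sigma \R$ are immediate, and orthogonality follows from $\SR(\Sigma\X, \Sigma\mathcal{Y}) \cong \SR(\X, \mathcal{Y}) = 0$. For the torsion decomposition of $R \in \R$, I would decompose $\Sigma^{-1}R$ via $(\X, \mathcal{Y})$ and rotate, using Lemma \ref{lem: T extn closed} to place the first term in $\R$. For the backward map, I would set $\X := \Sigma^{-1}(\U \ast \A)$ and $\mathcal{Y} := \Sigma^{-1}\mathcal{B}$: the inclusion $\mathcal{Y} \subseteq \R$ and the orthogonalities $\SR(\U, \mathcal{B}) \subseteq \SR(\U, \R) = 0$ and $\SR(\A, \mathcal{B}) = \R(\A, \mathcal{B}) = 0$ are clear. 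For the decomposition of $X \in \SR$, I would decompose $\Sigma X$ via $(\U, \R)$, decompose the resulting $\R$-part via $(\A, \mathcal{B})$, and combine with (ET4) to produce a triangle whose first term lies in $\U \ast \A$ and middle term in $\mathcal{B}$; shifting by $\Sigma^{-1}$ then yields the required decomposition.

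The main obstacle is establishing the closure $\Sigma^{-1}\X \subseteq \X$, which reduces to $\Sigma^{-1}\A \subseteq \U \ast \A$. For $A \in \A$, I would apply $(\U, \R)$ to $\Sigma^{-1}A$ to obtain $U \to \Sigma^{-1}A \to R \to \Sigma U$, then apply $(\A, \mathcal{B})$ to $R$ to obtain $A' \to R \to B' \to \Sigma A'$, and aim to show $R \in \A$. The key computation is that $\SR(\Sigma^{-1}A, B') \cong \R(A, \Sigma B') = 0$ (using $\Sigma \mathcal{B} \subseteq \mathcal{B}$ and torsion orthogonality) and $\SR(\Sigma U, B') \cong \SR(U, \Sigma^{-1}B') \subseteq \SR(\U, \R) = 0$ (using $\mathcal{B} \subseteq \Sigma \R$); these vanishings force $R \to B' = 0$ via the long exact sequence of the $(\U, \R)$-triangle, so the rotated torsion triangle is split with $A' \cong R \oplus \Sigma^{-1}B'$ and $R$ lies in $\A$ as a summand. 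The same splitting trick recovers $\A$ from $\R \cap (\U \ast \A)$ in the bijectivity check: for $T \in \R \cap (\U \ast \A)$ the induced map $T \to B'$ factors through $T \to A$, which sits in $\R(\A, \mathcal{B}) = 0$. The remaining identity $\U \ast (\R \cap \Sigma \X) = \Sigma \X$ is routine from extension closure and Lemma \ref{lem: T extn closed}. The whole argument hinges on the calibration of $\Sigma \mathcal{B} \subseteq \mathcal{B}$ and $\mathcal{B} \subseteq \Sigma \R$, which are precisely what is needed to propagate the Hom vanishings through shifts in $\SR$.
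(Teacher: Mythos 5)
Your part (a) is essentially correct, and it is worth noting that it is genuinely more self-contained than the paper, which does not write this proof at all but defers to \cite[Theorem 4.4]{tattar2020torsion} (and, for (b), to \cite[Theorem 3.9]{Adachi2021}). Your key steps are sound: the reduction of $\Sigma^{-1}\X \subseteq \X$ to $\Sigma^{-1}\A \subseteq \U \ast \A$, the Hom-vanishings calibrated by $\Sigma\mathcal{B} \subseteq \mathcal{B} \subseteq \Sigma\R$, the octahedral construction of the decomposition, and the splitting trick (a vanishing map to the torsion-free part makes the object a summand of the torsion part) used both for $\Sigma^{-1}\A \subseteq \U\ast\A$ and for $\R \cap (\U\ast\A) = \A$. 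One small omission: since (co-)t-structures here consist of \emph{additive} subcategories (closed under direct summands), you should also check that $\Sigma^{-1}(\U\ast\A)$ is summand-closed; this follows by running your splitting computation for an arbitrary object $Z$ with $\SR(Z,\Sigma^{-1}\mathcal{B})=0$, which shows that this left orthogonal coincides with $\Sigma^{-1}(\U\ast\A)$.

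The genuine gap is the one-sentence dismissal of part (b). Part (b) is not the formal dual of part (a) (its formal dual concerns left triangulated categories), and if you mirror your construction --- decompose $\Sigma Z$ by $(\R,\V)$ as $R \to \Sigma Z \to V$, decompose $R$ as $A' \to R \to B$, and apply the octahedral axiom to the composite $A' \to R \to \Sigma Z$ --- the cone $W$ you obtain sits in a triangle $B \to W \to V \to \Sigma B$, i.e.\ $W \in \mathcal{B}\ast\V$, whereas the statement asserts the co-aisle is $\Sigma^{-1}(\V\ast\mathcal{B})$; in (a) the octahedron delivers exactly the stated order $U \to W \to A$, but in (b) it does not. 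Similarly, your splitting trick identifies the right orthogonal of $\A$ with $\mathcal{B}\ast\V$, so the bijection itself survives, but to prove the proposition as stated you still need the extra identification $\V\ast\mathcal{B} = \mathcal{B}\ast\V$ in this situation (equivalently, that every object right-orthogonal to $\A$ admits a subobject in $\V$ with cone in $\mathcal{B}$ --- an absorption/splicing argument with no counterpart in (a)). This asymmetry is precisely why the paper treats (b) separately, via $\fs$-torsion pairs and \cite[Theorem 3.9]{Adachi2021}, rather than by duality; your write-up should either supply this extra step or follow that route.
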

\begin{proof} We do not write the proof of (a) since it is a straightforward generalisation of the proof of \cite[Theorem 4.4]{tattar2020torsion}. Part (b) can be proved in a similar way. Alternatively, since all torsion pairs in (b) are $\fs$-torsion pairs, the bijections of (b) are, after shifting, special cases of \cite[Theorem 3.9]{Adachi2021}.
\end{proof}
The (co-)t-structures in the second correspondences in parts (a) and (b) are called \textit{intermediate} with respect to the (co-)t-structure of $\R$.  Let us mention related work on this property. 
Intermediate t-structures also correspond to torsion pairs in the heart \cite{Happel1996, beligiannis2007homological} and have applications in, for example, stability conditions \cite{Woolf2010} and algebraic geometry \cite{Polishchuk2007}.
In the case of co-t-structures, it has been shown that intermediate co-t-structures correspond to certain two term silting subcategories \cite{Iyama2014} and to cotorsion pairs in the `extended coheart' 
\cite{pauksztello2020co}.

\section{Aisles through quotients} \label{sec:aislesquotients}

We fix some conventions. Let $\C = (\C, \EE, \fs)$ be a Frobenius extriangulated category, with projectives-injectives $\I$ and $\underline{\mathcal{C}} = \underline{\mathcal{C}}_{\I}$ be the stable category, which has the natural structure of triangulated category \cite[Corollary 7.4]{nakaoka2019extriangulated} with shift functor $\Sigma$. By $\I(X)$ we denote the object such that $X \to \I(X)$ is a minimal left $\I$-approximation. 

In this section, we show the following.  

\begin{thm} \label{thm:quotientshiftaisle}
Let $(\U, \V)$ be a t-structure in $\underline{\C}$ and set $\D = \Sigma^{-1} \V$. Then there is an equivalence of right triangulated  categories $\Sigma_\D \underline{\C}_\D \cong \U$. 
\end{thm}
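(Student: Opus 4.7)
The strategy is to produce the equivalence via restriction of the quotient functor $\pi : \underline{\C} \to \underline{\C}_\D$. Observe first that every $\EE$-projective-injective object of $\C$ is zero in $\underline{\C}$ and hence lies in $\V$, so $\I \subseteq \D$; in particular $\pi$ descends through $\underline{\C}$. We will show that $\Phi := \pi|_\U : \U \hookrightarrow \underline{\C} \to \underline{\C}_\D$ is a fully faithful right triangle functor with essential image precisely $\Sigma_\D \underline{\C}_\D$.

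Before defining $\Phi$ we must see that $\underline{\C}_\D$ is right triangulated, i.e.\ that $\D$ satisfies the hypotheses of Proposition~\ref{funcyisom}. Given $X \in \C$, the t-structure triangle of $\Sigma X$ in $\underline{\C}$, $U \to \Sigma X \to V \to \Sigma U$ with $U \in \U$ and $V \in \V$, rotates to $X \to \Sigma^{-1} V \to U \to \Sigma X$, exhibiting $X \to \Sigma^{-1} V$ (with $\Sigma^{-1} V \in \D$) as a left $\D$-approximation in $\underline{\C}$. Lifting to $\C$ and combining with the injective envelope $i_X : X \to \I(X)$ via Lemma~\ref{lem:LN1.20} produces an $\EE$-inflation $X \to \Sigma^{-1}V \oplus \I(X)$ that is a left $\D$-approximation in $\C$: any $f : X \to D'$ with $D' \in \D$ has its $\underline{\C}$-component factoring through $X \to \Sigma^{-1}V$ and the defect factors through $i_X$ since $\I \subseteq \D$. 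An analogous argument shows the cone is $\D$-epic, so Proposition~\ref{funcyisom} equips $\underline{\C}_\D$ with a right triangulated structure with right semi-equivalence $\Sigma_\D$.

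For $U \in \U$ we have $\underline{\C}(U, \D) \subseteq \underline{\C}(U, \V) = 0$, so every morphism $U \to D'$ in $\C$ with $D' \in \D$ factors through $i_U : U \to \I(U)$. Hence $i_U$ is itself a left $\D$-approximation of $U$, with cone $\Sigma U$, giving a natural isomorphism $\Sigma_\D U \cong \Sigma U$ in $\underline{\C}_\D$; this shows $\Phi$ intertwines the shifts. Lifting each right triangle of $\U$ (a distinguished triangle of $\underline{\C}$ with vertices in $\U$) to an extriangle of $\C$ using the Frobenius structure, the recipe of Proposition~\ref{thm:abm} then yields that $\Phi$ sends right triangles to right triangles. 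Fully faithfulness is immediate from $\underline{\C}(\U, \V) = 0$: any factorisation $U \to D \to U'$ with $D \in \D \subseteq \V$ has vanishing first leg.

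For the essential image, given $U \in \U$, choose a $\C$-lift of $\Sigma^{-1}U$; the argument of the previous paragraph applied to $\Sigma^{-1}U$ (which satisfies $\underline{\C}(\Sigma^{-1}U, \D) \cong \underline{\C}(U, \Sigma\D) \subseteq \underline{\C}(U, \V) = 0$) gives $\Sigma_\D(\Sigma^{-1}U) \cong U$ in $\underline{\C}_\D$, so $\Phi(U) \in \Sigma_\D \underline{\C}_\D$. Conversely, for any $X \in \underline{\C}_\D$ the left $\D$-approximation constructed in paragraph two sits in the rotated t-structure triangle $X \to \Sigma^{-1}V \to U_{\Sigma X} \to \Sigma X$ of $\Sigma X$, whose third term $U_{\Sigma X}$ lies in $\U$, and by Lemma~\ref{lem:shiftSD} the object $\Sigma_\D X$ is independent (up to natural isomorphism in $\underline{\C}_\D$) of the chosen approximation, so $\Sigma_\D X \cong \pi U_{\Sigma X} = \Phi(U_{\Sigma X})$. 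The main obstacle is precisely this last identification: two apparently different computations of $\Sigma_\D X$ are in play — one performed in $\C$ via Lemma~\ref{lem:LN1.20} (yielding a specific $\C$-cone) and one performed in $\underline{\C}$ via the t-structure (yielding $U_{\Sigma X}$) — and reconciling them requires a careful comparison of the minimal and non-minimal left $\D$-approximations, exploiting that their discrepancy sits in $\I$ and is therefore annihilated on passage to $\underline{\C}_\D$.
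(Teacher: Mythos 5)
Your overall route is the paper's own: restrict the quotient functor to $\U$, note that for $U \in \U$ the left $\I$-approximation $i_U$ is already a left $\D$-approximation (since $\und{\C}(\U,\V)=0$), so $\SD|_{\U} \cong \Sigma|_{\U}$; get full faithfulness from $\und{\C}(\U,\V)=0$; and identify the essential image with $\SD\CD$ via $\SD X \cong \U(\Sigma X)$ (these are exactly Lemma~\ref{liftingcov}, Lemma~\ref{lem:shiftgood} and Proposition~\ref{thm: alg aisle}). Your closing worry about reconciling the two computations of $\SD X$ is harmless: the isomorphism of Lemma~\ref{lem:shiftgood} holds already in $\und{\C}$ and is preserved by the additive functor $\und{\C} \to \CD$, choice-independence being Lemma~\ref{lem:shiftSD}.

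There is, however, one step that fails: the claim that ``an analogous argument shows the cone is $\D$-epic'', so that Proposition~\ref{funcyisom} equips $\CD$ with a right semi-equivalence. $\D$-epicity of the cone of the left $\D$-approximation of $X$ means every map $D \to \SD X$ with $D \in \D$ factors through the deflation $\V'(X)\oplus\I(X) \to \SD X$; taking $X = \Sigma^{-1}U$ with $U \in \U$, one has $\V'(X)=0$, $\SD X \cong U$, and the condition forces $\und{\C}(\D, U)=0$. This fails for general t-structures: for instance, in $\und{\C}=D^b(kA_3/\rad^2)$ with the HRS tilt at the torsion pair $(\mathsf{add}\,S_3,\,\mathcal{F})$ one has $S_1[-2]\in\D$, $S_3\in\U$ and $\und{\C}(S_1[-2],S_3)\cong\ext^2(S_1,S_3)\neq 0$. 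Indeed the paper explicitly intends these quotients as examples where $\SD\CD$ carries a right semi-equivalence while $\CD$ does not (see the remark following the definition of right triangle functors), so no such argument can exist. Fortunately nothing you actually use depends on it: property $(\ast)$, which you do verify, already yields the right triangulated structure on $\CD$ by Proposition~\ref{thm:abm}, and the same proposition gives that $\SD\CD$ is closed under extensions and hence right triangulated, which is all the theorem needs; this is precisely what the paper invokes. Delete the $\D$-epicity claim and replace the appeal to Proposition~\ref{funcyisom} by one to Proposition~\ref{thm:abm}, and your argument coincides with the paper's proof.
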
 

\begin{rem} \label{rem:SS,Ncomplementary}
In \cite[Proposition 3.9]{saorin2011exact}, where it was shown that t-structures in an algebraic triangulated category correspond bijectively to certain complete cotorsion pairs in the associated Frobenius exact category. 
Along that bijection a t-structure $(\U, \V)$ in $\und{\C}$ corresponds to the cotorsion pair $( \U, \Sigma^{-1} \V)$ in $\C$. This bijection generalises immediately to the extriangulated setting.  Further, when then applied to the case of a triangulated category $\mathcal{T}$ (which is Frobenius extriangulated category \cite[Proposition 3.22]{nakaoka2019extriangulated}), this specialises to the observation of \cite[Proposition 2.6]{Nakaoka2011} that t-structures in $\T$ are in bijection with cotorsion pairs $(\U, \V)$ in $\T$ such that $\Sigma \U \subseteq \U$. Thus, these results and Theorem \ref{thm:quotientshiftaisle} complement each other.
 \end{rem}

 We note that a class of objects $\V \subset \mathsf{Obj}(\C) = \mathsf{Obj}(\underline{\C})$ defines subcategories of both $\C$ and $\underline{\mathcal{C}}$. 

\begin{lem} \label{liftingcov}
Let $\D = \mathsf{Add}\D$ be a covariantly finite additive subcategory of $\underline{\mathcal{C}}$. Then $\D$ is also a covariantly finite additive subcategory of $\mathcal{C}$. Moreover, left $\D$-approximations are $\EE$-inflations in $\C$.
\end{lem}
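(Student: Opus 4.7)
The plan is to construct, for each $X \in \C$, a specific morphism that is simultaneously a left $\D$-approximation in $\C$ and an $\EE$-inflation. First I observe that, when viewed as a subcategory of $\C$, the lift of $\D$ contains $\I$: for each $I \in \I$ we have $\underline{I} \cong 0$ in $\underline{\C}$ and $0 \in \D$ since $\D$ is additive, so every $I \in \I$ lies in the preimage of $\D$ under the quotient functor. In particular, $D \oplus I \in \D$ whenever $D \in \D$ and $I \in \I$.

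For $X \in \C$, take a left $\D$-approximation $\underline{f} : X \to D$ in $\underline{\C}$, lift to a representative $f : X \to D$ in $\C$, and take the injective envelope $i_X : X \to \I(X)$. The candidate left $\D$-approximation in $\C$ will be the morphism $\phi := \bsm f \\ i_X \esm : X \to D \oplus \I(X)$, with target indeed in $\D$ by the first observation.

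For the approximation property: given any $g : X \to D'$ with $D' \in \D$, the fact that $\underline{f}$ is a left $\D$-approximation in $\underline{\C}$ yields $\underline{g} = \underline{\alpha} \cdot \underline{f}$ for some $\alpha : D \to D'$, so $g - \alpha f$ factors through some $I \in \I$, say $g - \alpha f = k\beta$ with $\beta : X \to I$ and $k : I \to D'$. Because $i_X$ is a left $\I$-approximation in $\C$, the map $\beta$ factors as $\beta = \beta' i_X$ for some $\beta' : \I(X) \to I$, and hence $g = (\alpha \;\; k\beta') \phi$ factors through $\phi$. For the inflation property: apply Lemma \ref{lem:LN1.20} to the extriangle $X \xrightarrow{i_X} \I(X) \xrightarrow{p} \Sigma X \dashrightarrow$ with the morphism $a = f : X \to D$. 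This yields an extriangle of the form $X \xrightarrow{\bsm -f \\ i_X \esm} D \oplus \I(X) \to \Sigma X \dashrightarrow$; composing with the automorphism $\bsm -1 & 0 \\ 0 & 1 \esm$ of $D \oplus \I(X)$ then shows that $\phi$ itself is the inflation of an extriangle, hence an $\EE$-inflation.

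The main obstacle is conceptual rather than technical: the representative $f$ of $\underline{f}$ need not itself be an $\EE$-inflation in $\C$, so one must recognise that the correct lift of the stable approximation to a left $\D$-approximation in $\C$ is the combined morphism $\phi$, which simultaneously encodes the approximation in $\underline{\C}$ and the injective envelope $i_X$. Once this construction is identified, Lemma \ref{lem:LN1.20} (together with the fact that isomorphisms of the target preserve inflations) supplies the required extriangle immediately, and the interplay with the universal property of $i_X$ in $\C$ is what makes the factorisation argument for the approximation property go through.
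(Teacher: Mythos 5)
Your proposal is correct and follows essentially the same route as the paper: the same candidate map $\bsm f \\ i_X \esm : X \to D \oplus \I(X)$ (using $\I \subseteq \D$ since injectives become zero in $\underline{\C}$), the same factorisation argument combining the stable $\D$-approximation with the left $\I$-approximation, and the same appeal to Lemma \ref{lem:LN1.20} for the $\EE$-inflation claim. Your version merely spells out the sign/automorphism step that the paper leaves implicit.
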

\begin{proof}
For any $X \in \mathcal{C}$, let $\underline{\alpha}: X \to D$ and $\beta: X \to \I(X) = I$ be a left $\D$-approximation of $X$ in $\underline{\mathcal{C}}$ and a left $\I$-approximation of $X$ in $\mathcal{C}$ respectively. We claim that $\left[ \begin{smallmatrix} \alpha \\ \beta \end{smallmatrix} \right] : X \to D \oplus I$ is a left $\D$-approximation of $X$ in $\mathcal{C}$.

Indeed, since $\D$ is additive, $0 \cong \I\subset \D$, so $D \oplus I \in \D$. Let $f: X \to W$ be a morphism in $\mathcal{C}$ with $W \in \D$. Then there exists $(\und{g}:D \to W) \in \und{\C}(D,W)$ such that $\und{f}=\und{g \alpha}$. Now, $\und{f - g \alpha} =0$ so $f - g \alpha$ factors through $\I$ and so must factor through the left $\PP$-approximation of $X$. Thus there exists $g':I \to W$ such that $f - g \alpha = g' \beta$. Rearranging we have
\[ \begin{tikzcd}[ampersand replacement=\&] 
X \arrow[r, "{ \left[ \begin{smallmatrix} \alpha \\ \beta \end{smallmatrix} \right] }"] \arrow[d, "f"'] \& D \oplus I \arrow[dl,  "{ \left[ \begin{smallmatrix} g & g' \end{smallmatrix} \right] }"] \\ W. \& \end{tikzcd} \] The remaining claim follows from Lemma \ref{lem:LN1.20}.
\end{proof}

Let $(\U, \V)$ be a t-structure on $\und{\C}$, so that for all objects $X$ there exists a unique (up to isomorphism) triangle in $\und{\C}$
\[
\begin{tikzcd} \U(X) \arrow[r, "\und{u}_X"] & X \arrow[r, "\und{v}_X"] & \V(X) \arrow[r, "\und{w}_X"] & \Sigma \U(X) \end{tikzcd}
\]  therefore there is a unique (up to isomorphism) triangle in $\und{\C}$
\begin{equation}\label{tstrtriangle}
\begin{tikzcd} \Sigma^{-1} \U(\Sigma X) \arrow[r, "\und{u}'_X"] & X \arrow[r, "\und{v}'_X"] & \Sigma^{-1} \V(\Sigma X) \arrow[r, "\und{w}'_X"] & \U(\Sigma X). \end{tikzcd}
\end{equation}
Note that this triangle is also the canonical triangle of $X$ with respect to the t-structure $(\Sigma^{-1} U, \Sigma^{-1} \V)$. Set $\D := \Sigma ^{-1}\V \subset \C$, which by Lemma \ref{liftingcov} satisfies property $(\ast )$ and thus the category $\und{\C}_\D$ is a right triangulated category by Proposition~\ref{thm:abm}. We also set $\V'(X) :=  \Sigma^{-1} \V(\Sigma X)$ and $J(X):=  \V' (X)\oplus \I(X)$.

\begin{lem} \label{lem:shiftgood}
For all $X \in \mathcal{C}$, $\Sigma_\mathcal{D} X \cong \U(\Sigma X)$ in $\und{\C}$. 
\end{lem}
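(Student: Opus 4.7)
The plan is to choose a particularly convenient representative of the extriangle defining $\Sigma_\mathcal{D} X$ using the construction from Lemma \ref{liftingcov}, then pass to the stable category and compare with the canonical t-structure triangle via uniqueness of cones. By Lemma \ref{lem:shiftSD} we are free to make any choice of left $\mathcal{D}$-approximation and the resulting $\Sigma_\mathcal{D}$ will be naturally isomorphic.

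Concretely, the proof of Lemma \ref{liftingcov} tells us that if $\underline{v}'_X : X \to \mathcal{V}'(X)$ is a left $\mathcal{D}$-approximation in $\underline{\mathcal{C}}$ (available from the triangle \ref{tstrtriangle}) and $\beta_X : X \to \mathbb{I}(X)$ is the left $\mathbb{I}$-approximation in $\mathcal{C}$, then the combined morphism $\bsm v'_X \\ \beta_X \esm : X \to J(X) = \mathcal{V}'(X) \oplus \mathbb{I}(X)$ is a left $\mathcal{D}$-approximation of $X$ in $\mathcal{C}$. I would use this as my choice of $i_X$, giving rise to an extriangle
\[ \begin{tikzcd} X \ar[r, "{\bsm v'_X \\ \beta_X \esm}"] & J(X) \ar[r, "p_X"] & \Sigma_{\mathcal{D}} X \ar[r, dashed] & {} \end{tikzcd} \]
in $\mathcal{C}$.

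Next, since $\mathcal{C}$ is Frobenius, this extriangle descends to a distinguished triangle in the triangulated stable category $\underline{\mathcal{C}}$. In $\underline{\mathcal{C}}$ we have $\mathbb{I}(X) \cong 0$, so $J(X) \cong \mathcal{V}'(X)$ and the first morphism of this triangle identifies with $\underline{v}'_X$; that is, there is a triangle
\[ \begin{tikzcd} X \ar[r, "\underline{v}'_X"] & \mathcal{V}'(X) \ar[r] & \Sigma_{\mathcal{D}} X \ar[r] & \Sigma X \end{tikzcd} \]
in $\underline{\mathcal{C}}$.  Rotating the canonical t-structure triangle (\ref{tstrtriangle}) one step gives
\[ \begin{tikzcd} X \ar[r, "\underline{v}'_X"] & \mathcal{V}'(X) \ar[r, "\underline{w}'_X"] & \mathcal{U}(\Sigma X) \ar[r, "-\Sigma \underline{u}'_X"] & \Sigma X \end{tikzcd}, \]
and comparing the two triangles with the same left-hand morphism, uniqueness of cones (up to non-canonical isomorphism) in the triangulated category $\underline{\mathcal{C}}$ yields $\Sigma_{\mathcal{D}} X \cong \mathcal{U}(\Sigma X)$ in $\underline{\mathcal{C}}$, as desired.

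The main thing to be careful about is the identification in $\underline{\mathcal{C}}$ of the first map of the descended triangle with $\underline{v}'_X$: the morphism $\bsm v'_X \\ \beta_X \esm$ becomes $\bsm \underline{v}'_X \\ 0 \esm$ in $\underline{\mathcal{C}}$ (as $\beta_X$ factors through an injective), which under the stable isomorphism $J(X) \cong \mathcal{V}'(X)$ is exactly $\underline{v}'_X$. This is the only delicate point; everything else is a direct application of the general machinery of Sections~2--3.
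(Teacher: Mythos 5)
Correct, and essentially the paper's own argument: both proofs take the left $\D$-approximation $X \to \V'(X) \oplus \I(X)$ supplied by Lemma \ref{liftingcov}, identify $\Sigma_\D X$ with the cone of $\und{v}'_X$ in $\und{\C}$ (the paper via the pushout extriangle of Lemma \ref{lem:LN1.20}, you by descending the defining extriangle to $\und{\C}$ and killing the injective summand), and then compare with the rotated triangle (\ref{tstrtriangle}) using uniqueness of cones. One small caution: the natural isomorphism of Lemma \ref{lem:shiftSD} lives only in $\und{\C}_\D$, not in $\und{\C}$, so the statement really presupposes that $\Sigma_\D X$ is computed with the choice $J(X)=\V'(X)\oplus\I(X)$ --- which is exactly the choice you (and, implicitly, the paper) make.
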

\begin{proof} 
By the definition of the functor $\Sigma_\D$ and  Lemmas \ref{lem:LN1.20} and \ref{liftingcov}, $\Sigma_\D X$ fits into a commutative diagram of extriangles in $\C$
\[ \begin{tikzcd}  
X \arrow[r] \arrow[d, "v'_X"']   & \I(X) \arrow[r] \arrow[d] & \Sigma X \arrow[d, equal] \arrow[r, dashed] & {} \\ \V' (X) \arrow[r] & \Sigma_\D X \arrow[r] & \Sigma X \arrow[r, dashed] & {} .
\end{tikzcd} \] where $\und{v}'_X:X \to \V'(X)$ fits into the triangle (\ref{tstrtriangle}). Thus $\Sigma_\D X = \mathsf{cone}_{\und{\C}}(\und{v}'_X) \cong  \U (\Sigma X)$ in $\und{\C}$. 
\end{proof}

Consider the diagram 
\[\begin{tikzcd}
 & \C \arrow[r, "\mathsf{can}"] \arrow[d, "\mathsf{can}"'] & \und{\C}_\D \\
\U \arrow[r, "i", "\mathsf{can}"', hook] & \und{\C} \arrow[ru, "\exists G"']                       &           
\end{tikzcd} \]
where $G$ is the unique additive functor rendering the diagram commutative which exists since $\I \subseteq \D$. The next result finishes the proof of Theorem \ref{thm:quotientshiftaisle}.

\begin{prop} \label{thm: alg aisle} The composition $\psi := Gi: \U \to  \und{\C}_\D$ is a full and faithful right triangle functor and induces an equivalence of right triangulated catgeories $U \cong \Sigma_\D \und{\C}_\D$. \end{prop}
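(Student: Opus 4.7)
The strategy is to show that $\psi$ is fully faithful with essential image exactly $\SD \CD$, and that it intertwines the two right triangulated structures via a natural isomorphism built from Lemma~\ref{lem:shiftgood}. First I would establish full faithfulness. For any $X,Y \in \U$, the map $\psi_{X,Y}: \und{\C}(X,Y) \to \CD(X,Y)$ is the quotient by the subgroup of morphisms factoring through $\D$, so fullness is automatic. For faithfulness, suppose $\psi(\und{f}) = 0$. Then $\und{f}$ factors through some object of $\D$ in $\und{\C}$. The t-structure axiom $\Sigma \U \subseteq \U$ yields $\D = \Sigma^{-1}\V \subseteq \V$, so $\und{f} \in \und{\C}(\U,\V) = 0$.

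To identify the essential image, I would use Lemma~\ref{lem:shiftgood} in both directions. On one hand, for every $Y \in \C$ one has $\SD Y \cong \U(\Sigma Y) \in \U$ in $\und{\C}$, so $\SD \CD$ lies in the essential image of $\psi$. Conversely, for $X \in \U$, pick $Z \in \C$ representing $\Sigma^{-1} X \in \und{\C}$; then $\SD Z \cong \U(\Sigma Z) \cong \U(X) = X$ in $\und{\C}$, so $\psi X$ is isomorphic to an object of $\SD \CD$. Hence the essential image equals $\SD \CD$, and combined with full faithfulness this gives an equivalence of additive categories.

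For the right triangulated structure, since $\Sigma \U \subseteq \U$ the right semi-equivalence on $\U$ is simply $\Sigma_\U = \Sigma|_\U$, and a right triangle in $\U$ is a distinguished triangle $X \to Y \to Z \to \Sigma X$ of $\und{\C}$ whose terms all lie in $\U$ (extension-closure of $\U$ ensures $Z \in \U$ automatically). For the required natural isomorphism $\zeta_X: \psi \Sigma_\U X \cong \SD \psi X$, the key observation is that for $X \in \U$ one has $\Sigma X \in \U$, hence $\V(\Sigma X) = 0$, whence $\V'(X) = \Sigma^{-1}\V(\Sigma X) \in \I$ is zero in $\und{\C}$; Lemma~\ref{lem:shiftgood} then provides a natural isomorphism $\SD X \cong \U(\Sigma X) = \Sigma X$ in $\und{\C}$, which descends to $\CD$. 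To verify preservation of right triangles, for $f: X \to Y$ in $\U$ I would build the right triangle in $\CD$ via Proposition~\ref{thm:abm}: push out the extriangle $X \to \V'(X) \oplus \I(X) \to \SD X$ in $\C$ along $f$ to obtain $Y \to C \to \SD X$. Passing to $\und{\C}$, the top extriangle collapses to the trivial one (since $\V'(X) \oplus \I(X)$ is zero in $\und{\C}$ and $\SD X \cong \Sigma X$), and the pushout reduces to the standard mapping cone construction, identifying $C$ with $Z$ and the connecting map with $\zeta_X \circ h$.

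The main technical obstacle is the last compatibility check, tracking the Proposition~\ref{thm:abm} pushout-of-left-$\D$-approximation construction through the equivalences $\V'(X) \simeq 0$ and $\SD X \cong \Sigma X$ in $\und{\C}$ for $X \in \U$. Once this comparison is in hand, the preceding steps assemble to give the equivalence of right triangulated categories $\U \cong \SD \CD$.
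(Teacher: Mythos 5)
Your proposal is correct and follows essentially the same route as the paper's proof: faithfulness via $\D = \Sigma^{-1}\V \subseteq \V$ and $\und{\C}(\U,\V)=0$, fullness from the quotient description, the essential image identified with $\Sigma_\D\und{\C}_\D$ by applying Lemma~\ref{lem:shiftgood} in both directions, and the triangle-functor property from $\V'(U)\cong 0$ giving $\Sigma|_\U \cong \Sigma_\D$. The paper handles the triangle-compatibility check more tersely (observing $\I(U)\cong J(U)$ so the two cone constructions agree), which is exactly the comparison you spell out via the pushout of the left $\D$-approximation.
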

\begin{proof}
\textbf{$\psi$ is a right triangulated functor:}
Note that for all  $U \in \U$, $\U(U) \cong U$ and $\V'(U) \cong 0$ in $\und{\C}$. Thus, by Lemma \ref{lem:shiftgood}, there is a natural isomorphism of functors $\Sigma |_{\U} \cong \Sigma_\D$ and $I(U) \cong J(U)$ so that (right) triangles in $\U$ are also right triangles in $\und{C}_\D$.

\textbf{$\psi$ is full:}
Follows from the fact that $i$ and $G$ are both full.

\textbf{$\psi$ is faithful:}
Let $\und{f} :X \to Y$ be a morphism in $\U$. Suppose that $\psi (\und{f}) = 0$. Then $\und{f}$ must factor through $\D = \Sigma^{-1} \V$ in $\und{C}$. But since $\und{C}(\U, \V)=0$ and $\Sigma ^{-1} \V \subset \V$ this is only possible if $\und{f}=0$.

\textbf{$\mathsf{Im}\psi \cong \Sigma_\D \und{\C}_\D$:} Let $\Sigma_\D X \in \Sigma_\D \und{C}_\D$, by Lemma \ref{lem:shiftgood}, $\Sigma_\D X \cong \U (\Sigma X)$ in $\C$ and hence also in $\und{C}_\D$. Thus $\psi (\U (\Sigma X)) \cong \Sigma_\D X$. Conversely, let $Y \in \U$. Then $Y \cong \Sigma_\D (\Sigma^{-1} Y) \in \und{\C}_\D$ by Lemma \ref{lem:shiftgood}.
\end{proof}

\begin{cor}
$\Sigma_\D G \Sigma^{-1}: \und{C} \to \Sigma_\D \und{C}_\D \cong \U$ is right adjoint to the inclusion $\U \hookrightarrow \und{C}$. In particular, $\Sigma_\D G \Sigma^{-1} \cong \U(-)$.
\end{cor}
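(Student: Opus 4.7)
The plan is to compute the functor $\Sigma_\D G \Sigma^{-1}$ explicitly and recognise it as the truncation functor $\U(-)$; the adjunction then follows from the standard fact that the inclusion of the aisle of a t-structure has right adjoint given by truncation. Equivalently, once I have produced the natural isomorphism $\Sigma_\D G \Sigma^{-1} \cong \U(-)$, both conclusions of the corollary drop out at once.

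First, I would fix $Y \in \und{\C}$ and choose a representative $X \in \C$ with $\Sigma X \cong Y$ in $\und{\C}$. Since $G$ acts as the identity on objects, $G \Sigma^{-1} Y$ is represented by $X$ in $\und{\C}_\D$, so Lemma~\ref{lem:shiftgood} gives the object-level isomorphism
$$\Sigma_\D G \Sigma^{-1}(Y) \;=\; \Sigma_\D X \;\cong\; \U(\Sigma X) \;\cong\; \U(Y)$$
in $\und{\C}$. To promote this to a natural isomorphism, given a morphism $\und{f} : Y \to Y'$ in $\und{\C}$ I would lift to a morphism $\und{g} : X \to X'$ between chosen representatives and check that the morphism $\Sigma_\D \und{g}$ produced by axiom (ET3) in the construction of $\Sigma_\D$ agrees, modulo $\D$, with the functorial morphism $\U(\und{f})$ induced on truncations; this is a diagram chase whose core is Lemma~\ref{NP3.5}, used to show that any two lifts filling the same (ET3) square differ by a morphism factoring through $p_X \in \D$.

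Second, I would record (or cite) that $\U(-)$ is right adjoint to $i : \U \hookrightarrow \und{\C}$. For $U \in \U$ and $Y \in \und{\C}$, applying $\und{\C}(U,-)$ to the canonical truncation triangle
$$\U(Y) \longrightarrow Y \longrightarrow \V(Y) \longrightarrow \Sigma\U(Y)$$
yields a long exact sequence whose neighbouring terms $\und{\C}(U, \V(Y))$ and $\und{\C}(U, \Sigma^{-1}\V(Y))$ both vanish, the former by the defining orthogonality of the t-structure and the latter because $\Sigma^{-1}\V \subseteq \V$ (which is equivalent to $\Sigma\U \subseteq \U$). This produces the natural isomorphism $\und{\C}(U, \U(Y)) \cong \und{\C}(U, Y)$, establishing $\U(-) \dashv i$. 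Combined with the isomorphism from the first step, $\Sigma_\D G \Sigma^{-1}$ is right adjoint to $i$ and is naturally isomorphic to $\U(-)$.

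The main obstacle is solely the naturality statement in the first step, since on objects the isomorphism is already given by Lemma~\ref{lem:shiftgood}. If the diagram chase proves cumbersome, an alternative route is to skip the identification with $\U(-)$ and instead verify the adjunction directly: use the extriangle $X \to \D(X) \to \Sigma_\D X$ in $\C$, pass to $\und{\C}$, and exhibit the counit $i \Sigma_\D G \Sigma^{-1} Y \to Y$ from this data, then check its universal property against morphisms out of objects of $\U$ using the vanishing $\und{\C}(\U, \V) = 0 = \und{\C}(\U, \Sigma^{-1}\V)$.
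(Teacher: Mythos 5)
Your proposal is correct and follows what the paper intends (the corollary is stated without proof as an immediate consequence of Lemma \ref{lem:shiftgood} and Proposition \ref{thm: alg aisle}): you identify $\Sigma_\D G \Sigma^{-1}$ with the truncation $\U(-)$ via Lemma \ref{lem:shiftgood} and then invoke the standard orthogonality argument showing $\U(-)$ is right adjoint to the aisle inclusion, using $\und{\C}(\U,\V)=0$ and $\Sigma^{-1}\V \subseteq \V$. The only point requiring care, naturality of the identification, is exactly the one you flag, and your plan via Lemma \ref{NP3.5} together with the uniqueness of fillers coming from the t-structure orthogonality settles it.
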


\begin{rem}
Theorem \ref{thm:quotientshiftaisle} cannot be directly dualised to give co-aisles of co-t-structures since the aisle of a co-t-structure is contravariantly finite. The author would be interested to know if there is a way to work around this problem.  
\end{rem}

\bibliographystyle{plain}
\bibliography{rib}

 \medskip
  \begin{tabular}{@{}l@{}}%
  \textsc{Department of Mathematics,  Universit\"{a}t zu K\"{o}ln,} \\ 
\textsc{Weyertal 86-90, 50931 K\"{o}ln, Germany}
\\
    \texttt{atattar@uni-koeln.de}
  \end{tabular}

\end{document}